\crefname{hypothesis}{Hypothesis}{Hypotheses}
\title{M\"untz Sturm-Liouville Problems: Theory and Numerical Experiments}
\author{Hassan Khosravian-Arab\thanks{Department of Applied Mathematics, Faculty of Mathematical Sciences, Tarbiat Modares University, P.O. Box 14115-134, Tehran, Iran 
		(\email{h.khosravian@aut.ac.ir,\ h.khosravian@modares.ac.ir}, \email{eslahchi@modares.ac.ir}).}
	\and M. R. Eslahchi\footnotemark[1]}
\begin{document}

\maketitle

\begin{abstract}
  This paper presents two new classes of M\"untz functions which are called Jacobi-M\"untz functions of the first and second types. These newly generated functions satisfy in two self-adjoint fractional Sturm-Liouville problems and thus they have some spectral properties such as: orthogonality, completeness, three-term recurrence relations and so on. With respect to these functions two new orthogonal projections and their error bounds are derived. Also, two new M\"untz type quadrature rules are introduced. As two applications of these basis functions some fractional ordinary and partial differential equations are considered and numerical results are given.
\end{abstract}

\begin{keywords}
 Erd\'{e}lyi-Kober  fractional derivatives and integrals, fractional Sturm-Liouville problems, M\"untz functions, self-adjoint operator, spectral properties, orthogonal projections, error bounds, M\"untz quadrature rules, fractional ordinary and partial differential equations.
\end{keywords}

\begin{AMS}
26A33,  
33C45,   
41A55,   
34L10,	 
65M70,
58C40.
\end{AMS}

\section{Introduction}
Roughly speaking, the usual {\it Spectral Methods} such as: Galerkin, Tau, Petrov-Galerkin, pseudo spectral and collocation methods, have attracted  attentions of many researchers in the field of numerical analysis. They have commonly used to solve various problems in engineering and sciences numerically. The main advantages of these methods are, in fact, simple implementations, rapid  and high accuracy approximations ({\it exponential accuracy or spectral accuracy}) for the smooth functions. The exponential accuracy of the usual spectral methods for the smooth solutions comes from the fact that in these methods the (smooth) solution is expanded in terms of the (orthogonal) polynomial basis functions (are so-called as the trial functions) which  are, in fact, sufficiently smooth functions. On the other hand, we have:
\begin{equation*}
\left. \begin{tabular}{l}
\text{Smooth solutions} \\
\ \  \ \ \ \ \ \ \ \ \ {\large\bf{+}}\\
\text{Smooth trial functions} 
\end{tabular}
\right \} \Longrightarrow  \ \text{Spectral methods with exponential accuracy}.
\end{equation*}
 Here, an important question remains to be answered is: how can we solve the problems with non-smooth solutions (especially, singularity at the end points of their domains) by the spectral methods with exponential accuracy? To answer the question, we need to develop the theory of the usual spectral methods for the non-smooth trial functions. To do so, we need to use the definition of the M\"untz functions. 

As we are aware, a M\"untz sequence is an increasing sequence of real numbers: ${\bf\Lambda}:=\{\lambda_k\}_{k=0}^\infty,\ \lambda_0<\lambda_1<\ldots\ $. With respect to the M\"untz sequence, a M\"untz system is used for a system of the form $(x^{\lambda_0},x^{\lambda_1},\ldots)$ and also the corresponding M\"untz space associated with ${\bf\Lambda}$ is also defined as:
\begin{equation*}
\mathcal{M}({\bf\Lambda}):= \bigcup\limits_{n=0}^\infty\mathcal{M}_n({\bf\Lambda})=\text{span}\{x^{\lambda_n},\ n=0,1,\ldots\},\ x\in[0,1],
\end{equation*}
where $\mathcal{M}_n({\bf\Lambda}):=\text{span}\{x^{\lambda_0},x^{\lambda_1},\ldots,x^{\lambda_n}\}$ for each $n=0,1,\ldots$.

It is easy to verify that when $0<\lambda_0<1$, then:
 \begin{equation}\label{Eq_3}
 u\simeq u_N=\sum_{k=0}^{N}c_kx^{\lambda_k},\ \lambda_0<\lambda_1<\ldots<\lambda_Nو
 \end{equation}
 is non-smooth at $x=0$. Therefore, when we have a problem with non-smooth solution at $x=0$, it is better to expand the solution in the form \eqref{Eq_3}. 
 
It should be noted that the above expansion is known as  M\"untz expansion  and the density of the M\"untz expansion on $C[0,1]$ for $\lambda_{0}=0$  is guarantied if  and only if  $\displaystyle \sum_{k=1}^{\infty}\frac{1}{\lambda_k}=\infty$ \cite{MR2350268}. 

Unfortunately,  in practice, the M\"untz expansion \eqref{Eq_3} have some difficulties from the numerical analysis point of view. One of the most important  difficulties is that the Fourier coefficients $c_k$ in \eqref{Eq_3}  quickly become unmanageably large \cite{MR3531734,MR2824680}. Perhaps, if it is possible, one of the most interesting ideas to eliminate these difficulties is to rewrite expansion \eqref{Eq_3} in terms of classical or modified orthogonal polynomials (or functions). So it is natural to figure out how to extract the classical or modified orthogonal M\"untz functions.
 
 It is known that some classes of orthogonal polynomials, which are so-called as the classical orthogonal polynomials, such as: Jacobi ( and its special cases: Chebyshev (first
and second kinds), Legendre, Gegenbauer), Laguerre and Hermite  can be obtained directly from the following second order ordinary differential equation which is called the Classical Sturm-Liouville Problems (CSLPs):
\begin{equation*}
\frac{d}{dx}\left(\rho(x)y'(x)\right)=\lambda_n \omega(x)y(x),
\end{equation*}
under some suitable boundary conditions. So, it is very natural that the main target of many researchers is to establish spectral methods with exponential accuracy by deriving some extended forms of the CSLPs to obtain other new classes of orthogonal polynomials (or functions) with outstanding features. 

 The review of the existing literatures clearly indicates that there are a number of different approaches to formulate various forms of the CSLPs. Due to the increasing growth of applications of fractional derivatives (and integrals) in many fields of sciences and engineering,   undoubtedly, one of the most interesting approaches to extend the CSLPs can be obtained when we replace the ordinary derivatives in CSLPs with fractional ones. These  formulations are so-called as fractional Sturm-Liouville problems (FSLPs). However, we can classify some of the most important forms of the  FSLPs  into the following six categories \cite{Khosravian-Arab2015526}:
 
\begin{itemize}
	\item The first formulation of  FSLPs is in the following form:
\begin{equation}\label{FSLF1}
\mathcal{D}^\alpha(p(x)y'(x))+q(x)y(x)=\lambda \omega(x)y(x),\ x\in[a,b],
\end{equation}
where $\alpha\in(0,1)$.
\item The second formulation of  FSLPs is as:
\begin{equation}\label{FSLF2}
\frac{d}{dx}\left(p(x)\mathcal{D}^\alpha y(x)\right)+q(x)y(x)=\lambda \omega(x)y(x),\ x\in[a,b],
\end{equation}
where $\alpha\in(0,1)$.
\item  The third formulation of  FSLPs is:
\begin{equation}\label{FSLF3}
\mathcal{D}^{\alpha}(p(x)y(x))+q(x)y(x)=\lambda \omega(x)y(x),\ x\in[a,b],
\end{equation}
where $\alpha\in(1,2)$.

We also point out that in the above three formulations $\mathcal{D}^{\alpha}$ is in the Riemann-Liouville or Caputo senses.
\item The fourth type of the FSL formulation is as follows \cite{Klimek2013795,Zayernouri2013495}:
\begin{equation}\label{FSLF5}
{}^{\pm}\mathcal{D}^{\alpha}(p(x){}^{\mp}\mathcal{D}^{\alpha}y(x))+q(x)y(x)=\lambda \omega(x)y(x),\ x\in[a,b],
\end{equation}
where $\alpha\in(0,1)$ or $\alpha\in(\frac12,1)$.
\item The fifth type of the FSL formulation is in the following form:
\begin{equation}\label{FSLF4}
\frac{d}{dx}\left(\rho(x)y'(x)\right)+\left({}^{\pm}\mathcal{D}^{\alpha}+{}^{\mp}\mathcal{D}^{\alpha}\right)y(x)+q(x)y(x)=\lambda \omega(x)y(x),\ x\in[a,b],
\end{equation}
where $\alpha\in(0,1)$ or $\alpha\in(\frac12,1)$.
\item The sixth type of FSL formulation is recently introduced as follows \cite{MR3742689}:
 \begin{equation}\label{FSLF6}
 (c_1{}^{\pm}\mathcal{D}^{\alpha}+c_2{}^{\mp}\mathcal{D}^{\alpha})(p(x)(c_3{}^{\mp}\mathcal{D}^{\alpha}+c_4{}^{\pm}\mathcal{D}^{\alpha})y(x))=\lambda \omega(x)y(x),\ x\in[a,b].
 \end{equation}
We emphasize that both left  and right  fractional derivatives (${}^-\mathcal{D}^{\alpha}$ and ${}^+\mathcal{D}^{\alpha}$) appear in the above three types of the FSLPs.
\end{itemize}

 Other formulations of FSLPs can be founded in \cite{Khosravian-Arab2015526} and references therein.
 
Roughly speaking, from the numerical analysis view point,  only the last three formulations of FSLPs have some notable features. One of the most interesting features of these operators is that they produce some self-adjoint operators under suitable boundary conditions. Between the last three cases, operators \eqref{FSLF5} can be obtained from the well known Euler-Lagrange equations of a class of fractional variational problem  directly \cite{MR2351655,MR2332922} and therefore the researchers have interested to focus their studies on these operators. The first study on the operators \eqref{FSLF5} was made by Klimek and Agrawal in \cite{Klimek2013795}. They provide some spectral properties of these operators under suitable boundary conditions. They showed that:
\begin{itemize}
	\item Eigenvalues of these operators are all real and simple.
	\item Eigenfunctions of these operator corresponding to distinct  eigenvalues are orthogonal with respect to some suitable weight functions.
\end{itemize}

At a little time, Zayernouri and Karniadakis in \cite{Zayernouri2013495} gave the analytical solutions of FSLPs which were considered by Klimek and Agrawal. In fact, the authors derived the eigenfunctions of FSLPs of the type \eqref{FSLF5} and then some useful properties of these functions, which were  so-called {\it Jacobi poly-fractonomials}, have been introduced and studied in detail. The closed forms of Jacobi poly-fractonomials on $[0,T]$ are as follows \cite{Zayernouri2013495} (See also \cite{MR3654887,MR3654886,MR3371355}):
\begin{eqnarray}
{}^1\phi_n(x)&=& x^b P^{(a,b)}_n\left(\frac{2x}{T}-1\right),\label{Zayer_1}\\
{}^2\phi_n(x)&=& (T-x)^a P^{(a,b)}_n\left(\frac{2x}{T}-1\right).\label{Zayer_2}
\end{eqnarray}

Let us look  at formulas \eqref{Zayer_1} and \eqref{Zayer_2} more closely. First,  the function \eqref{Zayer_1} vanishes at $x=0, (b>0)$ and function \eqref{Zayer_2} vanishes at $x=T,\ (a>0)$. Second, in fact, these functions can be seen as the special cases of the M\"untz expansion \eqref{Eq_3} for $\lambda_k=b+k$ (and $\lambda_k=a+k$).

To the best knowledge of the authors, until now, the following categories of the special cases of M\"untz expansions \eqref{Eq_3} have been introduced and studied in detail:
\begin{itemize}
	\item Take $\lambda_k=k\alpha,\ k=0,1,\ldots,N$  \cite{MR2316514,MR3020667,MR3499043,MR3531734}.
	\item Take $\lambda_k=k\alpha+j,\ k,j\in\Bbb{N}_0,\ k\alpha+j<N$ \cite{MR3124341,MR3261539}.
	\item Take $\lambda_k=k+\alpha,\ k=0,1,\ldots,N$ \cite{Zayernouri2013495,Zayernouri2014312,Zayernouri2014A40,MR3471102}.
\end{itemize}

So, the main objective of this paper is to construct other special cases for the M\"untz expansions \eqref{Eq_3} which not only produce the spectral methods with exponential accuracy for the non-smooth problems (problems with singularities at the end points of their domains) but also have some interesting features such as: Orthogonality, Rodrigues' formula, three-term recurrence relation and etc. 

For the readers' convenience, we highlight the main contributions of this paper as follows: 

\begin{itemize} 
	\item We introduce two new classes of functions which we called Jacobi-M\"untz functions (MJFs-1 and MJFs-2) in the following forms:
	\begin{eqnarray*}
	\ \ \ \ \ \ \ \ 	{}^1\mathcal{J}^{(\alpha,\beta,\mu,\sigma,\eta)}_n(x)&&=x^{\sigma(\beta-\eta-\mu)}P_n^{(\alpha,\beta)}\left(2\left(\frac{x}{b}\right)^\sigma-1\right),\ \ x\in[0,b],
	\\
	{}^2\mathcal{J}^{(\alpha,\beta,\sigma,\eta)}_n(x)&&=x^{\sigma\eta}\left(b^\sigma-x^\sigma\right)^\alpha P_n^{(\alpha,\beta)}\left(2\left(\frac{x}{b}\right)^\sigma-1\right),\ x\in[0,b], 
	\end{eqnarray*}
	where $\sigma\geq0$. MJFs-1 and MJFs-2 are in fact generalization of all classes which were introduced before (See \cref{Special}).
	\item We show that they can be seen as the eigenfunctions of two self-adjoint Erd\'{e}lyi-Kober (EK) fractional Sturm-Liouville eigenvalue problems (See \cref{Thm_1}, \cref{Thm_2}, \cref{Thm_1-1}).
	\item We derive some interesting properties of JMFs-1 and JMFs-2 like as: Non-polynomial natures, ordinary derivatives, Rodrigues' formulas, three-term recurrence formulas, orthogonality and orthogonality of their EK fractional derivatives (See \cref{SomeProperties}).
	\item  We also show that they construct two complete sets in some suitable Hilbert spaces (See \cref{Complete}). 
	\item Two new orthogonal projections with respect to MJFs-1 and MJFs-2 are introduced and their error bounds are proved (See \cref{ErrorBounds}).
	\item Two new quadrature rules which we called Gauss-Jacobi-M\"untz quadrature rules are introduced (See  \cref{MintzJacQuad}).
	
	\item These new basis functions are also applied to solve some problems  like as: EK fractional steady-state diffusion equations,  linear EK fractional diffusion equations and nonlinear Burgers partial differential equations.
\end{itemize}

The outline of this paper is as follows. In the next section, we provide some necessary definitions and properties of Erd\'{e}lyi-Kober fractional derivatives (and integrals) and Jacobi polynomials. The main goal of this paper is introduced in \cref{sec:main}. In this section, we define two new Jacobi-M\"untz functions and derive their essential properties. Also the approximation results together with two new quadrature rules are included in this section. Some applications of the newly introduced basis functions are given in \cref{sec:experiments}, and the conclusions follow in
\cref{sec:conclusions}.

\section{Preliminaries}\label{sec:preliminaries}
\subsection{Fractional calculus}
In this section, we compile some basic definitions and properties of Erd\'{e}lyi-Kober
fractional  derivatives and integrals.
\begin{definition}\label{E-K-FI}
	The left and right Erd\'{e}lyi-Kober  fractional integrals ${}_{a}I_{x,\sigma,\eta}^{\mu}$ and ${}_{x}I_{b,\sigma,\eta}^{\mu}$ of order $\mu\in\Bbb{R}^+$  are defined by \cite{MR2218073}:
	\begin{equation}\label{RINTL}
{}_{a}I_{x,\sigma,\eta}^{\mu}[f](x)=\frac{\sigma x^{-\sigma(\eta+\mu)}}{\Gamma(\mu)}\int_a^x(x^\sigma-t^\sigma)^{\mu-1}t^{\sigma(\eta+1)-1}f(t)\,dt,\ x\in(a,b],\ a>0,
	\end{equation}
	and
	\begin{equation}\label{RINTR}
	{}_{x}I_{b,\sigma,\eta}^{\mu}[f](x)=\frac{\sigma x^{\sigma\eta}}{\Gamma(\mu)}\int_x^b(t^\sigma-x^\sigma)^{\mu-1}t^{-\sigma(\eta+\mu-1)-1}f(t)\,dt,\,\ x\in[a,b),\ a>0,
		\end{equation}
respectively. Here $\Gamma(.)$ denotes the Euler gamma function.
\end{definition}
\begin{remark}It is interesting to point out that \cref{E-K-FI} for $\mu=1$ reduces to the following integral formulas respectively \cite{MR2218073}:
		\begin{equation*}\label{RINTLSC}
		{}_{a}I_{x,\sigma,\eta}^{1}[f](x)=\sigma x^{-\sigma(\eta+1)}\int_a^xt^{\sigma(\eta+1)-1}f(t)\,dt,\ x\in(a,b],\ a>0,
		\end{equation*}
		\begin{equation*}\label{RINTRSC}
		{}_{x}I_{b,\sigma,\eta}^{1}[f](x)=\sigma x^{\sigma\eta}\int_x^bt^{-\sigma\eta-1}f(t)\,dt,\,\ x\in[a,b),\ a>0.
		\end{equation*}
\end{remark}
\begin{definition}\label{E-KFD}
	The left and right Erd\'{e}lyi-Kober  fractional derivatives ${}_{a}D_{x,\sigma,\eta}^{\mu}$ and ${}_{x}D_{b,\sigma,\eta}^{\mu}$ of order $0<\mu<1$  are defined by \cite{MR2218073}:
	\begin{equation}\label{RDEERL}
	{}_{a}D_{x,\sigma,\eta}^{\mu}[f](x)=x^{-\sigma\eta}\left(\frac{1}{\sigma x^{\sigma-1}}\frac{d}{dx}\right)x^{\sigma(\eta+1)}{}_{a}I_{x,\sigma,\eta+\mu}^{1-\mu}[f](x),\ x\in(a,b],
	\end{equation}
	and
	\begin{equation}\label{RDERR}
	{}_{x}D_{b,\sigma,\eta}^{\mu}[f](x)=x^{\sigma(\eta+\mu)}\left(\frac{-1}{\sigma x^{\sigma-1}}\frac{d}{dx}\right)x^{-\sigma(\eta+\mu-1)}{}_{x}I_{b,\sigma,\eta+\mu-1}^{1-\mu}[f](x),\ x\in[a,b),
	\end{equation}
	respectively.
\end{definition}
\begin{remark}\label{Spec-E-KFD}
It is worthwhile to point out that for $\mu=1$, the \cref{E-KFD} reduces to:
\begin{eqnarray*}
&&{}_{a}D_{x,\sigma,\eta}^{1}[f](x)=x^{-\sigma\eta}\left(\frac{1}{\sigma x^{\sigma-1}}\frac{d}{dx}\right)x^{\sigma(\eta+1)}f(x),\\
&&{}_{x}D_{b,\sigma,\eta}^{1}[f](x)=x^{\sigma(\eta+1)}\left(\frac{-1}{\sigma x^{\sigma-1}}\frac{d}{dx}\right)x^{-\sigma\eta}f(x).
\end{eqnarray*}
\end{remark}
\begin{definition}
	The left and right  Erd\'{e}lyi-Kober fractional derivatives of Caputo type ${}^{C}_{a}D_{x,\sigma,\eta}^{\mu}$ and ${}^{C}_{x}D_{b,\sigma,\eta}^{\mu}$ of order $0<\mu<1$  are defined by \cite{MR2218073}:
	\begin{equation}\label{CDEERL}
	{}^{C}_{a}D_{x,\sigma,\eta}^{\mu}[f](x)=\frac{ x^{-\sigma\eta}}{\Gamma(1-\mu)}\int_a^x(x^\sigma-t^\sigma)^{-\mu}\frac{d}{dt}\left(t^{\sigma(\eta+\mu)}f(t)\right)\,dt,\ x\in(a,b],\ a>0,
	\end{equation}
	and
	\begin{equation}\label{CDERR}
	{}^{C}_{x}D_{b,\sigma,\eta}^{\mu}[f](x)=-\frac{ x^{\sigma(\eta+\mu)}}{\Gamma(1-\mu)}\int_x^b(t^\sigma-x^\sigma)^{-\mu}\frac{d}{dt}\left(t^{-\sigma\eta}f(t)\right)\,dt,\ x\in[a,b),\ a>0,
	\end{equation}
	respectively.
\end{definition}
In the following, we present some useful properties of the Erd\'{e}lyi-Kober (EK) fractional integrals and derivatives.
\begin{property} (Fractional integration by parts). The following properties hold true \cite{MR2218073}:
	\begin{itemize}
		\item Let $\mu>0$. If the left and right  EK fractional integrals of the given functions $f$ and $g$ exist, then  we have:
		\begin{equation}
		\int_a^b x^{\sigma-1}g(x)\ {}_{a}I_{x,\sigma,\eta}^{\mu}[f](x)\,dx=	\int_a^b x^{\sigma-1}f(x)\ {}_{x}I_{b,\sigma,\eta}^{\mu}[g](x)\,dx,\ a>0.
		\end{equation}
			\item Let $0<\mu<1$. If the left and right  EK fractional derivatives of the given functions $f$ and $g$ exist, then for $a\geq0$ we have:
			\begin{eqnarray}
			\int_a^b x^{\sigma-1}g(x)\ {}_{a}D_{x,\sigma,\eta}^{\mu}[f](x)\,dx&=&	\int_a^b x^{\sigma-1}f(x)\ {}^C_{x}D_{b,\sigma,\eta}^{\mu}[g](x)\,dx\label{Frac_Int_1}\\
			&+&\Big[\frac{x^\sigma}{\sigma}g(x)\ {}_{a}I_{x,\sigma,\eta+\mu}^{1-\mu}[f](x)\Big]_{x=a}^{x=b},\nonumber\\
			\int_a^b x^{\sigma-1}g(x)\ {}_{x}D_{b,\sigma,\eta}^{\mu}[f](x)\,dx&=&	\int_a^b x^{\sigma-1}f(x)\ {}^C_{a}D_{x,\sigma,\eta}^{\mu}[g](x)\,dx\label{Frac_Int_2}\\
			&-&\Big[\frac{x^\sigma}{\sigma}g(x)\ {}_{x}I_{b,\sigma,\eta+\mu-1}^{1-\mu}[f](x)\Big]_{x=a}^{x=b}.\nonumber
			\end{eqnarray}
	\end{itemize}
	\begin{proof}
		The proofs are straightforward.
		\end{proof}
\end{property}	
\begin{property}
Let $0<a<b<\infty$ and $\alpha,\beta>0$. If $f(x)\in L^p(a,b)$, then we have:
\begin{equation}
{}_{a}I_{x,\sigma,\eta}^{\alpha}{}\ {}_{a}I_{x,\sigma,\eta+\alpha}^{\beta}[f](x)={}_{a}I_{x,\sigma,\eta}^{\alpha+\beta}[f](x).
\end{equation}
and
\begin{equation}
{}_{x}I_{b,\sigma,\eta}^{\alpha}{}\ {}_{x}I_{b,\sigma,\eta+\alpha}^{\beta}[f](x)={}_{x}I_{b,\sigma,\eta}^{\alpha+\beta}[f](x).
\end{equation}
\end{property}	
\begin{property}
Let $0<a<b<\infty$. Then for sufficiently good function $f(x)$, we have:
\begin{equation}
{}_{a}D_{x,\sigma,\eta}^{\alpha}{}\ {}_{a}I_{x,\sigma,\eta}^{\alpha}{}[f](x)=f(x),\ {}_{x}D_{b,\sigma,\eta}^{\alpha}{}\ {}_{x}I_{b,\sigma,\eta}^{\alpha}{}[f](x)=f(x).
\end{equation}	
\end{property} 
\subsection{Jacobi polynomials} In this subsection, we briefly review some properties of Jacobi polynomials \cite{MR2867779,MR3471102}.

The hypergeometric functions are defined by:
\begin{equation}\label{Hyper_1}
	 {}_2 F_1\left(\begin{matrix}a,\ b\\ c\end{matrix};x\right)=\sum_{j=0}^{\infty}\frac{(a)_j(b)_j}{(c)_j}\frac{x^j}{j!},\ |x|<1,\ a,b,c\in\Bbb{R},\ -c\notin\Bbb{N}_0,
\end{equation}
where $(\theta)_j$  stands for the Pochhammer symbol. For $\theta\in \Bbb{R},\ j\in\Bbb{N}_0$ we have:
\begin{equation}\label{PochSymb}
(\theta)_0=1;\ (\theta)_j:=\theta(\theta+1)(\theta+2)\ldots(\theta+j-1)=\frac{\Gamma(\theta+j)}{\Gamma(\theta)},\ \text{for}\  j\geq1.
\end{equation}

For negative integer number $a$ or $b$, the hypergeometric functions \eqref{Hyper_1} reduces to a polynomial. 

The Jacobi polynomials with parameters $\alpha,\beta\in\Bbb{R}$ are obtained by the following formulas:
\begin{eqnarray}
&& P_n^{(\alpha,\beta)}(x)=\frac{(\alpha+1)_n}{n!}\
 {}_2 F_1\left(\begin{matrix}-n,\ n+\alpha+\beta+1\\ \alpha+1\end{matrix};\frac{1-x}{2}\right),\ n\geq1,\label{Hyper_Jac_1}\\
&& P_n^{(\alpha,\beta)}(x)=(-1)^n\frac{(\beta+1)_n}{n!}\ {}_2 F_1\left(\begin{matrix}-n,\ n+\alpha+\beta+1\\ \beta+1\end{matrix};\frac{1+x}{2}\right), \ n\geq1,\label{Hyper_Jac_2}
\end{eqnarray}

The Jacobi polynomials with $\alpha,\beta\in\Bbb{R}$, satisfy in the following three-term recurrence relation:
\begin{equation}
A_n^{\alpha,\beta}P_{n+1}^{(\alpha,\beta)}(x)=\left(B_n^{\alpha,\beta}x-C_n^{\alpha,\beta}\right)P_{n}^{(\alpha,\beta)}(x)-E_n^{\alpha,\beta}P_{n-1}^{(\alpha,\beta)}(x),\ n\geq1,
\end{equation}
where 
\begin{equation*}
	P_{0}^{(\alpha,\beta)}(x)=1,\ P_{1}^{(\alpha,\beta)}(x)=\frac{1}{2}(\alpha+\beta+2)x+\frac{1}{2}(\alpha-\beta),\ \alpha,\beta\in \Bbb{R},
\end{equation*} 
and by noting that:
\begin{eqnarray}
&& A_n^{\alpha,\beta}=2(n+1)(n+\alpha+\beta+1)(2n+\alpha+\beta),\label{Eq_Rec_1}\\
&& B_n^{\alpha,\beta}=(2n+\alpha+\beta)(2n+\alpha+\beta+1)(2n+\alpha+\beta+2),\label{Eq_Rec_2}\\
&& C_n^{\alpha,\beta}=(\beta^2-\alpha^2)(2n+\alpha+\beta+1),\label{Eq_Rec_3}\\
&& E_n^{\alpha,\beta}=2(n+\alpha)(n+\beta)(2n+\alpha+\beta+1).\label{Eq_Rec_4}
\end{eqnarray}
For $\alpha,\beta>-1$, the classical Jacobi polynomials are orthogonal over $[-1,1]$ with respect to the weight function: $w^{(\alpha,\beta)}(x)=(1-x)^\alpha(1+x)^\beta$, i.e.,
\begin{equation}\label{Orthog}
\int_{-1}^{1}P_{n}^{(\alpha,\beta)}(x)P_{m}^{(\alpha,\beta)}(x)w^{(\alpha,\beta)}(x)\,dx=\gamma_n^{(\alpha,\beta)}\delta_{nm},
\end{equation}
where $\delta_{nm}$ stands for the Dirac Delta symbol and we also have: 
\begin{equation}\label{Orthog_Cons}
\gamma_n^{(\alpha,\beta)}=\frac{2^{\alpha+\beta+1}\Gamma(n+\alpha+1)\Gamma(n+\beta+1)}{(2n+\alpha+\beta+1)n!\Gamma(n+\alpha+\beta+1)}.
\end{equation}
The Rodrigues' formula for the Jacobi polynomials is as follows:
\begin{equation}
P_{n}^{(\alpha,\beta)}(x)=(1-x)^{-\alpha}(1+x)^{-\beta}\frac{(-1)^n}{2^nn!}\frac{d^n}{dx^n}\Big[(1-x)^{n+\alpha}(1+x)^{n+\beta}\Big].
\end{equation}

Finally, we recall the Bateman fractional integral formula \cite{MR1688958}. For $c,\mu\geq0$ and $|x|<1$, we have:
\begin{equation}\label{BateFracInt}
\frac{1}{\Gamma(\mu)}\int_{0}^{x}(x-t)^{\mu-1}t^{c-1}{}_2 F_1\left(\begin{matrix}a,\ b\\ c\end{matrix};t\right)\,dt=\frac{\Gamma(\mu)}{\Gamma(c+\mu)}x^{c+\mu-1}{}_2 F_1\left(\begin{matrix}a,\ b\\ c+\mu\end{matrix};x\right).
\end{equation}

After briefly reviewing the basic properties of the E-K fractional integrals and derivatives and the Jacobi polynomials, in this position, we are ready  to state the main aim of this paper in the next section.

\section{Main results}\label{sec:main}
\subsection{Jacobi-M\"untz functions}
In this section, we first introduce two subclasses of the M\"untz functions and in the sequel several interesting properties of them will be addressed.  
	\begin{definition}\label{JacMunFuns}
		Let $\alpha,\beta>-1$. The Jacobi-M\"unts functions of the first and second kinds (JMFs-1 and JMFs-2) are denoted by ${}^1\mathcal{J}^{(\alpha,\beta,\mu,\sigma,\eta)}_n(x)$ and ${}^2\mathcal{J}^{(\alpha,\beta,\sigma,\eta)}_n(x)$, respectively, and are defined by:
		\begin{eqnarray}
		\ \ \ \ \ \ \ \ 	{}^1\mathcal{J}^{(\alpha,\beta,\mu,\sigma,\eta)}_n(x)&&=x^{\sigma(\beta-\eta-\mu)}P_n^{(\alpha,\beta)}\left(2\left(\frac{x}{b}\right)^\sigma-1\right),\ \ x\in[0,b],
		\\
		{}^2\mathcal{J}^{(\alpha,\beta,\sigma,\eta)}_n(x)&&=x^{\sigma\eta}\left(b^\sigma-x^\sigma\right)^\alpha P_n^{(\alpha,\beta)}\left(2\left(\frac{x}{b}\right)^\sigma-1\right),\ x\in[0,b], 
		\end{eqnarray}
		where $\sigma\geq0$.
	\end{definition}
		\begin{remark}
			It should be noted that the JMFs-1 and JMFs-2 are in fact two new subclasses of M\"untz functions because for JMFs-1 we have:  
			\begin{equation*}
			{}^1\mathcal{J}^{(\alpha,\beta,\mu,\sigma,\eta)}_n(x)\in\text{span}\left\{x^{\lambda_k},k=0,1,\ldots,n\right\},
			\end{equation*}
			where $\lambda_k=a+kb,\ k=0,1,\ldots,n$ and  $a=\sigma(\beta-\eta-\mu),\ b=\sigma$. Moreover, for JMFs-2 we also have: 
			\begin{equation*}
			{}^2\mathcal{J}^{(\alpha,\beta,\sigma,\eta)}_n(x)\in\text{span}\left\{(b^\sigma-x^\sigma)^\alpha x^{\lambda_k},k=0,1,\ldots,n\right\},
			\end{equation*}
			where $\lambda_k=\sigma\eta+\sigma k$.
		\end{remark}
		\begin{remark}\label{Special}
			Another important issue which is worth to emphasize here is that the JMFs-1 and JMFs-2 are in fact two generalized classes of all modifications of functions constructed from Jacobi polynomials. In the following we list some of them:
			\begin{itemize}
				\item If $\sigma=1$ and $\beta=\eta+\mu$ then JMFs-1 reduces to the classical Jacobi polynomials on $[0,b]$.
				\item If $\sigma=1$ then the JMFs-1 reduces to the first type of the Jacobi poly-fractonomials on $[0,b]$ \cite{Zayernouri2013495,MR2787811} (See \eqref{Zayer_1}).
				\item If $\beta=\eta+\mu$ then the JMFs-1 reduces to fractional Jacobi (and in a special case fractional Legendre) functions \cite{MR2316514,MR3020667,MR3499043,MR3531734}. 
				\item If $\sigma=1$ and $\eta=0$ then the JMFs-2 reduces to the second type of the Jacobi poly-fractonomials on $[0,b]$ \cite{Zayernouri2013495} (See \eqref{Zayer_2}).
				\item If $\sigma=1$ and $\eta=\beta$ the JMFs-2 reduces to the two-sided Jacobi poly-fractonomials on $[0,b]$ \cite{MR3742689}.
			\end{itemize}  
		\end{remark}
\subsection{Jacobi-M\"untz fractional Sturm-Liouville problems}		
In the following, we first introduce two new classes of FSL operators and then some important properties of them will be studied in detail. To do this, we denote:
\begin{equation*}
L^2((a,b),w(x)):=\left\{f:\ \int_{a}^{b}|f(x)|^2w(x)\,dx<\infty \right\}.
\end{equation*}

For $\mu\in(0.5,1)$, consider the following operators:
\begin{eqnarray}
&&{}^1\mathcal{L}^{(\alpha,\beta,\mu,\sigma,\eta)}(u(x)):=\frac{x^{\sigma-1}}{w_1^{(\alpha,\beta,\mu,\sigma,\eta)}(x)}\left({}_{x}D_{b,\sigma,\eta}^{\mu}\left[p_1(x)\ {}^{C}_{0}D_{x,\sigma,\eta}^{\mu}\right]u\right),\ \label{Oper_1}\\ &&{}^2\mathcal{L}^{(\alpha,\beta,\mu,\sigma,\eta)}(u(x)):=\frac{x^{\sigma-1}}{w_2^{(\alpha,\beta,\sigma,\eta)}(x)}\left({}_{0}D_{x,\sigma,\eta}^{\mu}\left[p_2(x)\ {}^{C}_{x}D_{b,\sigma,\eta}^{\mu}\right]u\right),\label{Oper_2}\ 
\end{eqnarray}
in $L^2((0,b),x^{\sigma-1}w_1^{(\alpha,\beta,\mu,\sigma,\eta)}(x))$ and $L^2((0,b),x^{\sigma-1}w_2^{(\alpha,\beta,\sigma,\eta)}(x))$, respectively, where
\begin{eqnarray}
&&  w_1^{(\alpha,\beta,\mu,\sigma,\eta)}(x)=x^{\sigma(2(\eta+\mu)-\beta)}\left(b^\sigma-x^\sigma\right)^{\alpha},\label{Wei_1}\\
&& p_1(x)=x^{\sigma(2\eta+\mu-\beta)}\left(b^\sigma-x^\sigma\right)^{\mu+\alpha}, \label{Coef_1}\\
&& w_2^{(\alpha,\beta,\sigma,\eta)}(x)=x^{\sigma(\beta-2\eta)}\left(b^\sigma-x^\sigma\right)^{-\alpha},\label{Wei_2}\\
&& p_2(x)=x^{-\sigma(\mu+2\eta-\beta)}\left(b^\sigma-x^\sigma\right)^{\mu-\alpha}. \label{Coef_2}
\end{eqnarray}

Now, we start to study some properties of the proposed operators \eqref{Oper_1} and \eqref{Oper_2}. 

\begin{theorem}\label{Thm_1}
The operators \eqref{Oper_1} and \eqref{Oper_2}, together with the functions \eqref{Wei_1}-\eqref{Coef_2} are both self-adjoint on the domains 
\begin{eqnarray}
&&\mathbf D_1:=\Big\{u\in L^2((0,b),x^{\sigma-1}w_1^{(\alpha,\beta,\mu,\sigma,\eta)}(x)),\nonumber\\ &&\hspace{3cm}\Big[x^\sigma u(x)\Big]_{x=0}=0,\ \Big[{}_{x}I_{b,\sigma,\eta+\mu-1}^{1-\mu}\left[p_1(x)\ {}^{C}_{0}D_{x,\sigma,\eta}^{\mu}u(x)\right]\Big]_{x=b}=0 \Big\},\nonumber\\
&& \mathbf D_2:=\Big\{u\in L^2((0,b),x^{\sigma-1}w_2^{(\alpha,\beta,\sigma,\eta)}(x)),\nonumber\\ &&\hspace{3cm}u(b)=0,\ \Big[x^\sigma {}_{0}I_{x,\sigma,\eta+\mu}^{1-\mu}\left[p_2(x)\ {}^{C}_{x}D_{b,\sigma,\eta}^{\mu}u(x)\right]\Big]_{x=0}=0 \Big\},\nonumber
\end{eqnarray}
respectively, that is for all $u,v\in\mathbf D_i,\ i=1,2$, we have:
\begin{equation*}
 \left({}^1\mathcal{L}^{(\alpha,\beta,\mu,\sigma,\eta)}u,v\right)_{w_1^{(\alpha,\beta,\mu,\sigma,\eta)}}=\left(u,{}^1\mathcal{L}^{(\alpha,\beta,\mu,\sigma,\eta)}v\right)_{w_1^{(\alpha,\beta,\mu,\sigma,\eta)}},
\end{equation*}
and
\begin{equation*}
	\left({}^2\mathcal{L}^{(\alpha,\beta,\mu,\sigma,\eta)}u,v\right)_{w_2^{(\alpha,\beta,\sigma,\eta)}}=\left(u,{}^2\mathcal{L}^{(\alpha,\beta,\sigma,\eta)}v\right)_{w_2^{(\alpha,\beta,\sigma,\eta)}},
\end{equation*}
where
\begin{equation*}
(f,g)_\omega=\int_0^b{\overline{f(z)}}g(z)\omega(z)\,dz.
\end{equation*}
\begin{proof} We prove this theorem only for $i=1$. Using fractional integration by parts \eqref{Frac_Int_1} and \eqref{Frac_Int_2} for all real valued functions $u,v\in\mathbf D_1$, we obtain:
	\begin{eqnarray*}
	\left({}^1\mathcal{L}^{(\alpha,\beta,\mu,\sigma,\eta)}u,v\right)_{w_1^{(\alpha,\beta,\mu,\sigma,\eta)}}&=&\int_0^bx^{\sigma-1}\left({}_{x}D_{b,\sigma,\eta}^{\mu}\left[p_1(x)\ {}^{C}_{0}D_{x,\sigma,\eta}^{\mu}u(x)\right]\right)v(x)\,dx\nonumber\\
	&=&\left(u,{}^1\mathcal{L}^{(\alpha,\beta,\mu,\sigma,\eta)}v\right)_{w_1^{(\alpha,\beta,\mu,\sigma,\eta)}}\\
	&+&\Big[\frac{x^\sigma}{\sigma}u(x){}_{x}I_{b,\sigma,\eta+\mu-1}^{1-\mu}\left[p_1(x)\ {}^{C}_{0}D_{x,\sigma,\eta}^{\mu}v(x)\right]\Big]_{x=0}^{x=b}\\
	&-&\Big[\frac{x^\sigma}{\sigma}v(x){}_{x}I_{b,\sigma,\eta+\mu-1}^{1-\mu}\left[p_1(x)\ {}^{C}_{0}D_{x,\sigma,\eta}^{\mu}u(x)\right]\Big]_{x=0}^{x=b}.
	\end{eqnarray*}
	Thanks to the fact that $u,v\in\mathbf D_1$, the last two terms of the above equation vanish and then the proof is concluded.
\end{proof}
\end{theorem}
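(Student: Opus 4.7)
My plan is to establish both self-adjointness statements by two successive applications of the fractional integration by parts formulas \eqref{Frac_Int_1} and \eqref{Frac_Int_2}, reducing each inner product $\left({}^i\mathcal{L}u,v\right)$ to a symmetric bilinear form plus boundary remainders that vanish thanks to the constraints defining $\mathbf D_i$. The two cases are essentially mirror images of each other, so I would carry out $i=1$ in detail and then remark that $i=2$ is obtained by interchanging the roles of left and right EK operators; the author indicates this same strategy.

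For $i=1$, I would begin by using the weight cancellation in the definition of ${}^1\mathcal{L}^{(\alpha,\beta,\mu,\sigma,\eta)}$ to rewrite
\[
\left({}^1\mathcal{L}^{(\alpha,\beta,\mu,\sigma,\eta)}u,v\right)_{w_1^{(\alpha,\beta,\mu,\sigma,\eta)}} = \int_0^b x^{\sigma-1}\, v(x)\,{}_{x}D_{b,\sigma,\eta}^{\mu}\!\left[p_1(x)\,{}^{C}_{0}D_{x,\sigma,\eta}^{\mu}u(x)\right] dx.
\]
First I would apply \eqref{Frac_Int_2} with $g=v$ and $f=p_1\,{}^C_0D^\mu_{x,\sigma,\eta}u$, transferring the right Riemann--Liouville-type EK derivative ${}_{x}D_{b,\sigma,\eta}^{\mu}$ off of that bracket and onto $v$ as the left Caputo-type EK derivative ${}^C_{0}D_{x,\sigma,\eta}^{\mu}v$. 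Then I would apply \eqref{Frac_Int_1} a second time, with $g=p_1\,{}^C_0D^\mu_{x,\sigma,\eta}v$ (playing the role of the test factor in the new integral, after absorbing $p_1$) and $f=u$, transferring the Caputo derivative on $u$ to produce ${}_{x}D_{b,\sigma,\eta}^{\mu}$ acting on the bracket with $v$. Reassembling, this yields exactly
\[
\left({}^1\mathcal{L}^{(\alpha,\beta,\mu,\sigma,\eta)}u,v\right)_{w_1^{(\alpha,\beta,\mu,\sigma,\eta)}} = \left(u,{}^1\mathcal{L}^{(\alpha,\beta,\mu,\sigma,\eta)}v\right)_{w_1^{(\alpha,\beta,\mu,\sigma,\eta)}} + B_1(u,v) - B_1(v,u),
\]
where $B_1(u,v):=\bigl[\tfrac{x^\sigma}{\sigma}\,u(x)\,{}_{x}I_{b,\sigma,\eta+\mu-1}^{1-\mu}[p_1(x)\,{}^C_0D^\mu_{x,\sigma,\eta}v(x)]\bigr]_0^b$ is the boundary remainder contributed by \eqref{Frac_Int_1}--\eqref{Frac_Int_2}.

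The key step is then showing $B_1(u,v)=B_1(v,u)=0$ under membership in $\mathbf D_1$. At $x=b$, the factor ${}_{x}I_{b,\sigma,\eta+\mu-1}^{1-\mu}[p_1\,{}^C_0D^\mu_{x,\sigma,\eta}u]$ vanishes by the second defining condition of $\mathbf D_1$ (and similarly for $v$); at $x=0$, the prefactor $x^\sigma u(x)$ vanishes by the first defining condition, so the product is zero there too. Hence both boundary contributions drop out and symmetry follows. For $i=2$, the plan is identical but with the roles of $0$ and $b$, and of left and right EK derivatives, swapped: apply \eqref{Frac_Int_1} first to move ${}_{0}D_{x,\sigma,\eta}^{\mu}$ onto $v$ as ${}^C_{x}D_{b,\sigma,\eta}^{\mu}v$, then \eqref{Frac_Int_2} to relocate the remaining Caputo derivative, and invoke the two conditions cutting $\mathbf D_2$ to kill the resulting boundary pair. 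The principal obstacle I expect is purely bookkeeping --- matching the EK parameters $\eta,\eta+\mu,\eta+\mu-1$ in $p_1,p_2,w_1,w_2$ so that the indices in \eqref{Frac_Int_1}--\eqref{Frac_Int_2} line up with the fractional integrals appearing in the boundary terms of $\mathbf D_1$ and $\mathbf D_2$; this is what motivated the specific choice of $w_i$ and $p_i$ in \eqref{Wei_1}--\eqref{Coef_2} in the first place.
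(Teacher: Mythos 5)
Your proposal is correct and follows essentially the same route as the paper: a double application of the EK fractional integration by parts formulas to reduce $\left({}^1\mathcal{L}u,v\right)_{w_1}$ to the symmetric form $\left(u,{}^1\mathcal{L}v\right)_{w_1}$ plus the two boundary remainders $\Big[\frac{x^\sigma}{\sigma}u\,{}_{x}I_{b,\sigma,\eta+\mu-1}^{1-\mu}[p_1\,{}^C_0D^\mu v]\Big]_0^b-\Big[\frac{x^\sigma}{\sigma}v\,{}_{x}I_{b,\sigma,\eta+\mu-1}^{1-\mu}[p_1\,{}^C_0D^\mu u]\Big]_0^b$, which are then killed by the two defining conditions of $\mathbf D_1$ exactly as you argue (and symmetrically for $\mathbf D_2$). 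Your explicit identification of which condition kills which endpoint is in fact slightly more detailed than the paper's own proof, which simply asserts that the last two terms vanish; the only trivial slip is calling the second transfer an application of \eqref{Frac_Int_1}, whereas the index $\eta+\mu-1$ in your boundary term correctly corresponds to reading \eqref{Frac_Int_2} in reverse.
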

\begin{theorem}\label{Thm_1-1} The eigenvalues $ {}^i\Lambda^{(\alpha,\beta,\mu)}_n,\ i=1,2$ of the eigenvalue problems 
\begin{eqnarray}\label{Operat}
{}^i\mathcal{L}^{(\alpha,\beta,\mu,\sigma,\eta)}\left(u(x)\right)=x^{\sigma-1}\ {}^i\Lambda^{(\alpha,\beta,\mu)}_n u(x),\ i=1,2,
\end{eqnarray}
are all real and the  eigenfunctions (corresponding to distinct eigenvalues) are mutually orthogonal with respect to the weight functions $x^{\sigma-1}\ w_1^{(\alpha,\beta,\mu,\sigma,\eta)}(x),\ x^{\sigma-1}\ w_2^{(\alpha,\beta,\sigma,\eta)}(x)$, respectively. 
\end{theorem}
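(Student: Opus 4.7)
The approach is the classical spectral-theory argument for self-adjoint operators, adapted to the weighted inner product appearing in Theorem~\ref{Thm_1}. The strategy handles reality of the spectrum and orthogonality of eigenfunctions simultaneously, by deriving a single identity from self-adjointness and then specializing it in two ways.

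First, I would fix $i \in \{1,2\}$ and let $u_n, u_m \in \mathbf{D}_i$ be eigenfunctions with (a priori complex) eigenvalues ${}^i\Lambda_n^{(\alpha,\beta,\mu)}, {}^i\Lambda_m^{(\alpha,\beta,\mu)}$, so that ${}^i\mathcal{L}^{(\alpha,\beta,\mu,\sigma,\eta)} u_k = x^{\sigma-1}\,{}^i\Lambda_k^{(\alpha,\beta,\mu)} u_k$ for $k \in \{n,m\}$. Since the inner product $(f,g)_{w_i} = \int_0^b \overline{f}\,g\,w_i\,dx$ is conjugate-linear in its first entry, direct substitution yields
\begin{equation*}
\bigl({}^i\mathcal{L}^{(\alpha,\beta,\mu,\sigma,\eta)} u_n,\, u_m\bigr)_{w_i} = \overline{{}^i\Lambda_n^{(\alpha,\beta,\mu)}} \int_0^b x^{\sigma-1}\,\overline{u_n}\,u_m\,w_i\,dx,
\end{equation*}
\begin{equation*}
\bigl(u_n,\, {}^i\mathcal{L}^{(\alpha,\beta,\mu,\sigma,\eta)} u_m\bigr)_{w_i} = {}^i\Lambda_m^{(\alpha,\beta,\mu)} \int_0^b x^{\sigma-1}\,\overline{u_n}\,u_m\,w_i\,dx.
\end{equation*}
Because Theorem~\ref{Thm_1} guarantees that ${}^i\mathcal{L}^{(\alpha,\beta,\mu,\sigma,\eta)}$ is self-adjoint on $\mathbf{D}_i$, the two left-hand sides coincide, producing the core identity
\begin{equation*}
\Bigl({}^i\Lambda_m^{(\alpha,\beta,\mu)} - \overline{{}^i\Lambda_n^{(\alpha,\beta,\mu)}}\Bigr) \int_0^b x^{\sigma-1}\,\overline{u_n}\,u_m\,w_i\,dx = 0.
\end{equation*}

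Next, I would specialize this identity twice. Setting $n = m$, the integral becomes the squared norm of $u_n$ in $L^2((0,b), x^{\sigma-1} w_i)$, which is strictly positive for any nontrivial eigenfunction; hence $\overline{{}^i\Lambda_n^{(\alpha,\beta,\mu)}} = {}^i\Lambda_n^{(\alpha,\beta,\mu)}$ and every eigenvalue is real. Returning to the core identity with the (now real) eigenvalues and distinct indices satisfying ${}^i\Lambda_n^{(\alpha,\beta,\mu)} \neq {}^i\Lambda_m^{(\alpha,\beta,\mu)}$, the prefactor is nonzero, so the integral must vanish. This is precisely the asserted orthogonality with respect to the weight $x^{\sigma-1}\,w_i$.

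The main obstacle is a bookkeeping step rather than a computational one: I must verify that the candidate eigenfunctions and their complex conjugates genuinely belong to the domains $\mathbf{D}_i$, since the self-adjointness identity of Theorem~\ref{Thm_1} was derived under the boundary conditions imposed on both arguments. For a generic abstract eigenfunction this would require revisiting the boundary terms in the fractional integration-by-parts formulas \eqref{Frac_Int_1} and \eqref{Frac_Int_2}; however, since the theorem is stated at the abstract operator level, I would simply record the assumption $u_n, u_m \in \mathbf{D}_i$ as part of the hypotheses and defer the concrete verification to the subsequent identification of the explicit eigenfunctions, where the power-type prefactors $x^{\sigma(\beta-\eta-\mu)}$ and $x^{\sigma\eta}(b^\sigma-x^\sigma)^\alpha$ of the JMFs in Definition~\ref{JacMunFuns} will force the required vanishing at the endpoints under appropriate parameter restrictions.
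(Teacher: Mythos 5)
Your proposal is correct and is exactly the argument the paper has in mind: the paper's proof consists of the single sentence that the result follows from the self-adjointness established in \cref{Thm_1}, and your write-up simply fills in the standard details (the core identity, reality via $n=m$, orthogonality via distinct eigenvalues), correctly accounting for the extra $x^{\sigma-1}$ factor in the eigenvalue equation so that orthogonality holds with respect to $x^{\sigma-1}w_i$. Your closing remark about membership of the eigenfunctions in $\mathbf D_i$ is a reasonable caveat that the paper itself leaves implicit.
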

\begin{proof}
The proof of the theorem is easily concluded from the self-adjointness of  the operators ${}^i\mathcal{L}^{(\alpha,\beta,\mu,\sigma,\eta)}\left(u(x)\right),\ i=1,2$.	
\end{proof}

In this position, our aim is to obtain the analytical solutions of the operators \eqref{Operat}. To reach our aim, we essentially need  some important remarks.
\begin{remark}\label{E-KFIHyp}	Let $\mu>0$. Then we have:
	\begin{eqnarray*}
		&&{}_{0}I_{x,\sigma,\eta}^{\mu}\Big[x^{\sigma(c-\eta-1)}{}_2 F_1\left(\begin{matrix}a,\ b\\ c\end{matrix};\left(\frac{x}{b}\right)^\sigma\right)\Big]=\frac{\Gamma(c)}{\Gamma(c+\mu)}x^{\sigma(c-\eta-1)}{}_2 F_1\left(\begin{matrix}a,\ b\\ c+\mu\end{matrix};\left(\frac{x}{b}\right)^\sigma\right).\\
		&&{}_{x}I_{b,\sigma,\eta}^{\mu}\Big[x^{\sigma(\eta+\mu)}\left(b^\sigma-x^\sigma\right)^{c-1} {}_2 F_1\left(\begin{matrix}a,\ b\\ c\end{matrix};1-\left(\frac{x}{b}\right)^\sigma\right)\Big]=\\ &&\hspace{3.5cm}\frac{\Gamma(c)}{\Gamma(c+\mu)}x^{\sigma\eta}\left(b^\sigma-x^\sigma\right)^{c+\mu-1}{}_2 F_1\left(\begin{matrix}a,\ b\\ c+\mu\end{matrix};1-\left(\frac{x}{b}\right)^\sigma\right).
	\end{eqnarray*}
\end{remark}
\begin{proof}
	The use of the change of variable in Bateman fractional integral formula \eqref{BateFracInt}, the above relations can be concluded.
\end{proof}
\begin{remark}\label{E-KFDHyp}	Let $0<\mu\leq1$. Then we have:
	\begin{eqnarray*}
		&&{}_{0}D_{x,\sigma,\eta}^{\mu}\Big[x^{\sigma(c-\mu-\eta-1)}{}_2 F_1\left(\begin{matrix}a,\ b\\ c\end{matrix};\left(\frac{x}{b}\right)^\sigma\right)\Big]=\frac{\Gamma(c)}{\Gamma(c-\mu)}x^{\sigma(c-\mu-\eta-1)}{}_2 F_1\left(\begin{matrix}a,\ b\\ c-\mu\end{matrix};\left(\frac{x}{b}\right)^\sigma\right).\\
		&&{}_{x}D_{b,\sigma,\eta}^{\mu}\Big[x^{\sigma\eta}\left(b^\sigma-x^\sigma\right)^{c-1} {}_2 F_1\left(\begin{matrix}a,\ b\\ c\end{matrix};1-\left(\frac{x}{b}\right)^\sigma\right)\Big]=\\ &&\hspace{3.5cm}\frac{\Gamma(c)}{\Gamma(c-\mu)}x^{\sigma(\eta+\mu)}\left(b^\sigma-x^\sigma\right)^{c-\mu-1}{}_2 F_1\left(\begin{matrix}a,\ b\\ c-\mu\end{matrix};1-\left(\frac{x}{b}\right)^\sigma\right).
	\end{eqnarray*}
\end{remark}
\begin{proof}
	The proof is concluded by the use of \cref{E-KFD} and \cref{E-KFIHyp}.
\end{proof}
\begin{remark}\label{FracInt}
	Let $\mu>0$. Then we have:
	\begin{eqnarray*}
	&&{}_{0}I_{x,\sigma,\eta}^{\mu}\Big[x^{\sigma(\beta-\eta)}P_k^{(\alpha,\beta)}\left(2\left(\frac{x}{b}\right)^\sigma-1\right)\Big]=\\
	&&\hspace{5cm}\frac{\Gamma(k+\beta+1)}{\Gamma(k+\beta+\mu+1)}x^{\sigma(\beta-\eta)}P_k^{(\alpha-\mu,\beta+\mu)}\left(2\left(\frac{x}{b}\right)^\sigma-1\right).\\
	&&{}_{x}I_{b,\sigma,\eta}^{\mu}\Big[x^{\sigma(\eta+\mu)}\left(b^\sigma-x^\sigma\right)^\alpha P_k^{(\alpha,\beta)}\left(2\left(\frac{x}{b}\right)^\sigma-1\right)\Big]=\\ &&\hspace{3.5cm}\frac{\Gamma(k+\alpha+1)}{\Gamma(k+\alpha+\mu+1)}x^{\sigma\eta}\left(b^\sigma-x^\sigma\right)^{\alpha+\mu}P_k^{(\alpha+\mu,\beta-\mu)}\left(2\left(\frac{x}{b}\right)^\sigma-1\right).
	\end{eqnarray*}
\end{remark}
\begin{proof} 
	The proof of the first relation is easily obtain when we set $a=-k$, $b=k+\alpha+\beta+1$ and $c=\beta+1$ in \cref{E-KFIHyp} together with the use of formula \eqref{Hyper_Jac_2}. For the proof of the second relation we substitute $a=-k$, $b=k+\alpha+\beta+1$ and $c=\alpha+1$ in \cref{E-KFIHyp} and then using the relation \eqref{Hyper_Jac_1}.
	\end{proof}
\begin{remark}\label{FracDer}
	Let $0<\mu\leq1$. Then we have:
	\begin{eqnarray*}
		&&{}_{0}D_{x,\sigma,\eta}^{\mu}\Big[x^{\sigma(\beta-\eta-\mu)}P_k^{(\alpha,\beta)}\left(2\left(\frac{x}{b}\right)^\sigma-1\right)\Big]=\\
		&&\hspace{5cm}\frac{\Gamma(k+\beta+1)}{\Gamma(k+\beta-\mu+1)}x^{\sigma(\beta-\eta-\mu)}P_k^{(\alpha+\mu,\beta-\mu)}\left(2\left(\frac{x}{b}\right)^\sigma-1\right).\\
		&&{}_{x}D_{b,\sigma,\eta}^{\mu}\Big[x^{\sigma\eta}\left(b^\sigma-x^\sigma\right)^\alpha P_k^{(\alpha,\beta)}\left(2\left(\frac{x}{b}\right)^\sigma-1\right)\Big]=\\ &&\hspace{3.3cm}\frac{\Gamma(k+\alpha+1)}{\Gamma(k+\alpha-\mu+1)}x^{\sigma(\eta+\mu)}\left(b^\sigma-x^\sigma\right)^{\alpha-\mu}P_k^{(\alpha-\mu,\beta+\mu)}\left(2\left(\frac{x}{b}\right)^\sigma-1\right).
	\end{eqnarray*}
\end{remark}
\begin{proof}
	The proof of this theorem is fairly similar to the proof of the previous remark.
\end{proof}
\begin{theorem}\label{Thm_2}
	The analytical solutions of the eigenvalue problems 
	\begin{eqnarray}\label{EigenValue_1}
	{}^1\mathcal{L}^{(\alpha,\beta,\mu,\sigma,\eta)}\left(u_1(x)\right)=x^{\sigma-1}\ {}^1\Lambda^{(\alpha,\beta,\mu)}_n u_1(x),\ \alpha>\mu-1,\ \beta>\eta+\mu,\ 
	\end{eqnarray}
	subject to the boundary conditions:
	\begin{equation}\label{IBCEigenValue_1}
	 u_1(0)=0,\ {}_{x}I_{b,\sigma,\eta+\mu-1}^{1-\mu}\left[p_1(x)\ {}^{C}_{0}D_{x,\sigma,\eta}^{\mu}u_1(x)\right]\Big]_{x=b}=0,
	\end{equation}
	and 
	\begin{eqnarray}\label{EigenValue_2}
	{}^2\mathcal{L}^{(\alpha,\beta,\mu,\sigma,\eta)}\left(u_2(x)\right)=x^{\sigma-1}\  {}^2\Lambda^{(\alpha,\beta,\mu)}_n u_2(x),\ \alpha>0,\ \beta>\eta-\mu+1,
	\end{eqnarray}
	subject to the boundary conditions:
	\begin{equation}\label{IBCEigenValue_2}
	\ {}_{0}I_{x,\sigma,\eta+\mu}^{1-\mu}\left[p_2(x)\ {}^{C}_{x}D_{b,\sigma,\eta}^{\mu}u_2(x)\right]\Big]_{x=0}=0, \ u_2(b)=0,
	\end{equation}
	where the operators ${}^i\mathcal{L}^{(\alpha,\beta,\mu,\sigma,\eta)}(.),\ i=1,2$ are defined in  \eqref{Oper_1} and \eqref{Oper_2}, together with the functions \eqref{Wei_1}-\eqref{Coef_2} and
	\begin{eqnarray}
&&	{}^1\Lambda^{(\alpha,\beta,\mu)}_n=\frac{\Gamma(n+\beta+1)\Gamma(n+\alpha+\mu+1)}{\Gamma(n+\beta-\mu+1)\Gamma(n+\alpha+1)},\\ &&{}^2\Lambda^{(\alpha,\beta,\mu)}_n=\frac{\Gamma(n+\alpha+1)\Gamma(n+\beta+\mu+1)}{\Gamma(n+\alpha-\mu+1)\Gamma(n+\beta+1)},
	\end{eqnarray}
	are
	 $${}^1\mathcal{J}^{(\alpha,\beta,\mu,\sigma,\eta)}_n(x)=x^{\sigma(\beta-\eta-\mu)}P_n^{(\alpha,\beta)}\left(2\left(\frac{x}{b}\right)^\sigma-1\right),\  \alpha>\mu-1,\ \beta>\eta+\mu,$$
	  and $${}^2\mathcal{J}^{(\alpha,\beta,\sigma,\eta)}_n(x)=x^{\sigma\eta}\left(b^\sigma-x^\sigma\right)^\alpha P_n^{(\alpha,\beta)}\left(2\left(\frac{x}{b}\right)^\sigma-1\right),\ \alpha>0,\ \beta>\eta-\mu+1,$$ respectively.
\end{theorem}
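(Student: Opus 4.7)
The natural strategy is to substitute the candidate eigenfunction into the operator and simplify, using Remarks FracDer and FracInt to handle the Erdélyi-Kober fractional derivatives and integrals in closed form. I would treat the first eigenvalue problem in detail, since the second follows by complete symmetry (swap the roles of the left/right EK operators and of $\alpha\leftrightarrow\beta$, and replace $x^{\sigma(\beta-\eta-\mu)}$ by $x^{\sigma\eta}(b^\sigma - x^\sigma)^\alpha$ in the factorization).

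The first preliminary step is to justify replacing the Caputo derivative ${}^C_0 D^{\mu}_{x,\sigma,\eta}$ by the Riemann--Liouville EK derivative ${}_0 D^{\mu}_{x,\sigma,\eta}$ of Remark FracDer when acting on ${}^1\mathcal{J}^{(\alpha,\beta,\mu,\sigma,\eta)}_n$. Comparing Definition E-KFD with the Caputo definition in \eqref{CDEERL}, the two agree whenever $t^{\sigma(\eta+\mu)}f(t)\to 0$ as $t\to 0^+$. For $f={}^1\mathcal{J}_n$ this quantity equals $t^{\sigma\beta}P_n^{(\alpha,\beta)}(-1)$, which vanishes because the hypothesis $\beta>\eta+\mu$ forces $\sigma\beta>0$. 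With this reduction in hand, the first formula of Remark FracDer gives directly
\begin{equation*}
{}^C_0 D^{\mu}_{x,\sigma,\eta}\bigl[{}^1\mathcal{J}^{(\alpha,\beta,\mu,\sigma,\eta)}_n\bigr](x) = \frac{\Gamma(n+\beta+1)}{\Gamma(n+\beta-\mu+1)}\,x^{\sigma(\beta-\eta-\mu)}P_n^{(\alpha+\mu,\beta-\mu)}\!\left(2(x/b)^\sigma-1\right).
\end{equation*}

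Next I multiply by $p_1(x)=x^{\sigma(2\eta+\mu-\beta)}(b^\sigma-x^\sigma)^{\mu+\alpha}$. The $x$-powers collapse to $x^{\sigma\eta}$, producing a function of exactly the form $x^{\sigma\eta}(b^\sigma-x^\sigma)^{\alpha+\mu}P_n^{(\alpha+\mu,\beta-\mu)}(\cdot)$ to which the second formula of Remark FracDer applies (with the substitutions $\alpha\mapsto\alpha+\mu$, $\beta\mapsto\beta-\mu$). This yields
\begin{equation*}
{}_x D^{\mu}_{b,\sigma,\eta}\bigl[p_1\,{}^C_0 D^{\mu}_{x,\sigma,\eta}{}^1\mathcal{J}_n\bigr](x) = \frac{\Gamma(n+\beta+1)\,\Gamma(n+\alpha+\mu+1)}{\Gamma(n+\beta-\mu+1)\,\Gamma(n+\alpha+1)}\,x^{\sigma(\eta+\mu)}(b^\sigma-x^\sigma)^{\alpha}P_n^{(\alpha,\beta)}(\cdot).
\end{equation*}
Dividing by $w_1^{(\alpha,\beta,\mu,\sigma,\eta)}(x)=x^{\sigma(2(\eta+\mu)-\beta)}(b^\sigma-x^\sigma)^\alpha$ and multiplying by $x^{\sigma-1}$ makes the $(b^\sigma-x^\sigma)^\alpha$ factor cancel and reconstructs the $x^{\sigma(\beta-\eta-\mu)}$ prefactor of ${}^1\mathcal{J}_n$, so the right-hand side becomes exactly $x^{\sigma-1}\,{}^1\Lambda^{(\alpha,\beta,\mu)}_n\,{}^1\mathcal{J}_n(x)$, which is the desired eigenvalue identity.

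It remains to verify the two boundary conditions \eqref{IBCEigenValue_1}. The condition at the left endpoint is immediate: since $\beta>\eta+\mu$, the power $\sigma(\beta-\eta-\mu)$ is strictly positive, hence ${}^1\mathcal{J}_n(0)=0$. The condition at $x=b$ is the more delicate step, and in my view the main technical obstacle of the proof. From the displayed intermediate calculation, $p_1\,{}^C_0 D^{\mu}_{x,\sigma,\eta}{}^1\mathcal{J}_n$ has precisely the structure to which the second formula of Remark FracInt applies with parameters $\mu_p=1-\mu$, $\eta_p=\eta+\mu-1$, $\alpha_p=\alpha+\mu$, $\beta_p=\beta-\mu$; the identity $\eta_p+\mu_p=\eta$ guarantees that the input matches the required template. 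Applying the remark increases the exponent of $(b^\sigma-x^\sigma)$ from $\alpha+\mu$ to $\alpha+1$, and since $\alpha>\mu-1>-1/2$ implies $\alpha+1>0$, the resulting expression vanishes as $x\to b^-$. Finally, the eigenvalue problem \eqref{EigenValue_2}--\eqref{IBCEigenValue_2} is handled by the mirror argument: apply Remark FracDer's second formula first (using $\alpha>0$ to ensure Caputo--RL equivalence through $(b^\sigma-x^\sigma)^\alpha$ vanishing at $x=b$), then the first formula, and close with the analogue of Remark FracInt at $x=0$ using $\beta>\eta-\mu+1$.
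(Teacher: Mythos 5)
Your proposal follows the paper's own argument essentially step for step: apply the first formula of \cref{FracDer} to ${}^1\mathcal{J}^{(\alpha,\beta,\mu,\sigma,\eta)}_n$, multiply by $p_1$ so that the result matches the template for the second formula of \cref{FracDer}, recover $x^{\sigma(\eta+\mu)}(b^\sigma-x^\sigma)^{\alpha}P_n^{(\alpha,\beta)}$ with the product of the two Gamma ratios as eigenvalue, divide by $w_1$, and check the two boundary conditions (the one at $x=b$ via \cref{FracInt}, where your exponent $\alpha+1$ is in fact the correct outcome of the computation). The only substantive addition is your explicit Caputo-versus-Riemann--Liouville reconciliation, which the paper performs silently; note, however, that your justification there is slightly off, since $\beta>\eta+\mu$ forces $\sigma\beta>0$ only when $\eta+\mu\geq 0$, so the vanishing of $t^{\sigma(\eta+\mu)}\,{}^1\mathcal{J}^{(\alpha,\beta,\mu,\sigma,\eta)}_n(t)$ at $t=0$ really requires $\beta>0$ (or an additional sign assumption on $\eta+\mu$).
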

\begin{proof}
	We only obtain the analytical solution to the eigenvalue problem  \eqref{EigenValue_1}. The same fashion can be used for the eigenvalue problem \eqref{EigenValue_2}.  First, we recall the following relations which will be obtained easily from \cref{FracDer} as follows:
		\begin{eqnarray*}
			&&{}_{x}D_{b,\sigma,\eta}^{\mu}\Big[x^{\sigma\eta}\left(b^\sigma-x^\sigma\right)^{\alpha+\mu} P_n^{(\alpha+\mu,\beta-\mu)}\left(2\left(\frac{x}{b}\right)^\sigma-1\right)\Big]=\\ &&\hspace{3.3cm}\frac{\Gamma(n+\alpha+\mu+1)}{\Gamma(n+\alpha+1)}x^{\sigma(\eta+\mu)}\left(b^\sigma-x^\sigma\right)^{\alpha}P_n^{(\alpha,\beta)}\left(2\left(\frac{x}{b}\right)^\sigma-1\right).
		\end{eqnarray*}
Then, the use of \cref{FracDer} together with the above relation, we have:
\begin{eqnarray}
	&&\frac{x^{\sigma-1}}{w_1^{(\alpha,\beta,\mu,\sigma,\eta)}(x)}\left({}_{x}D_{b,\sigma,\eta}^{\mu}\left[p_1(x)\ {}^{C}_{0}D_{x,\sigma,\eta}^{\mu}{}^1\mathcal{J}^{(\alpha,\beta,\mu,\sigma,\eta)}_n(x)\right]\right)=\nonumber\\
	&&\frac{\Gamma(n+\beta+1)}{\Gamma(n+\beta-\mu+1)}\frac{x^{\sigma-1}}{w_1^{(\alpha,\beta,\mu,\sigma,\eta)}(x)}	
	\left({}_{x}D_{b,\sigma,\eta}^{\mu}\left[x^{\sigma\eta}\left(b^\sigma-x^\sigma\right)^{\alpha+\mu} P_n^{(\alpha+\mu,\beta-\mu)}\left(2\left(\frac{x}{b}\right)^\sigma-1\right)\right]\right)\nonumber\\
	&&\hspace{7.5cm}=x^{\sigma-1}\  {}^1\Lambda^{(\alpha,\beta,\mu,\sigma,\eta)}_n\ {}^1\mathcal{J}^{(\alpha,\beta,\mu,\sigma,\eta)}_n(x),\nonumber
\end{eqnarray}
which means that the function ${}^1\mathcal{J}^{(\alpha,\beta,\mu,\sigma,\eta)}_n(x)$ satisfies in eigenvalue problem \eqref{EigenValue_1}. Now, it remains to show that the function ${}^1\mathcal{J}^{(\alpha,\beta,\mu,\sigma,\eta)}_n(x)$ is also satisfied the boundary conditions \eqref{IBCEigenValue_1}. The first condition holds true, that is ${}^1\mathcal{J}^{(\alpha,\beta,\mu,\sigma,\eta)}_n(0)=0$ for $\beta>\mu+\eta$. For the second condition, we have:
\begin{eqnarray*}
&&{}_{x}I_{b,\sigma,\eta+\mu}^{1-\mu}\left[p_1(x)\ {}^{C}_{0}D_{x,\sigma,\eta}^{\mu}{}^1\mathcal{J}^{(\alpha,\beta,\mu,\sigma,\eta)}_n(x)\right]=\\
&&\ \ \ \frac{\Gamma(k+\beta+1)}{\Gamma(k+\beta-\mu+1)} {}_{x}I_{b,\sigma,\eta+\mu}^{1-\mu}\Big[x^{\sigma\eta}\left(b^\sigma-x^\sigma\right)^{\alpha+\mu} P_k^{(\alpha+\mu,\beta-\mu)}\left(2\left(\frac{x}{b}\right)^\sigma-1\right)\Big]=\nonumber\\
&&\ \ \ \frac{\Gamma(k+\beta+1)\Gamma(k+\alpha+1)}{\Gamma(k+\beta-\mu+1)\Gamma(k+\alpha-\mu+2)}x^{\sigma(\eta+\mu-1)}(b^\sigma-x^\sigma)^{\alpha-\mu+1}P_k^{(\alpha-\mu+1,\beta+\mu-1)}\left(2\left(\frac{x}{b}\right)^\sigma-1\right),
\end{eqnarray*}
which tends to zero at $x=b$ for $\alpha>\mu-1$. This completes the proof.
	\end{proof}

		\begin{remark}\label{EKFD}
			By noting \cref{JacMunFuns}, we can rewrite \cref{FracDer} for $0<\mu\leq1$  follows:
			\begin{eqnarray*}
				&&{}_{0}D_{x,\sigma,\eta}^{\mu}\Big[{}^1\mathcal{J}^{(\alpha,\beta,\mu,\sigma,\eta)}_k(x)\Big]=\frac{\Gamma(k+\beta+1)}{\Gamma(k+\beta-\mu+1)}{}^1\mathcal{J}^{(\alpha+\mu,\beta-\mu,\mu,\sigma,\eta-\mu)}_k(x),\  \beta-\mu>-1.\\
				&&{}_{x}D_{b,\sigma,\eta}^{\mu}\Big[{}^2\mathcal{J}^{(\alpha,\beta,\sigma,\eta)}_k(x)\Big]=\frac{\Gamma(k+\alpha+1)}{\Gamma(k+\alpha-\mu+1)}{}^2\mathcal{J}^{(\alpha-\mu,\beta+\mu,\sigma,\eta+\mu)}_k(x),\  \alpha-\mu>-1.
			\end{eqnarray*}
			\end{remark}
\begin{remark}\label{ExtendedEKFD}
	It is easy to show that \cref{EKFD} remains true for the case $ \mu>1$.
\end{remark}		
	\subsection{Some properties of MJFs-1 and MJFs-2}\label{SomeProperties}
In what follows we list some properties of 	the Jacobi-M\"unts functions of the first and second kinds as the eigenfunctions of the FSLPs \eqref{EigenValue_1} and \eqref{EigenValue_2}.
\begin{itemize}
	\item {\bf Non-polynomial natures:}\\
	Due to the shapes of JMFs-1 and JMFs-2, we clearly find that these eigenfunctions have non-polynomial behaviors. So, we can conclude that these functions are suitable to apply for the problems with non-smooth solutions. 
	
	Also, for $\beta>\mu+\eta$, we have  ${}^1\mathcal{J}^{(\alpha,\beta,\mu,\sigma,\eta)}_n(0)=0$ and for $\eta,\alpha>0$ we have ${}^2\mathcal{J}^{(\alpha,\beta,\sigma,\eta)}_n(0)={}^2\mathcal{J}^{(\alpha,\beta,\sigma,\eta)}_n(b)=0$.
	\item {\bf  Asymptotic behavior of eigenvalues ${}^1\Lambda^{(\alpha,\beta,\mu)}_n$ and ${}^2\Lambda^{(\alpha,\beta,\mu)}_n$:}\\
	Using the well known formula \cite{MR1225604}:
	 \begin{equation}\label{Asymp1}
	 \frac{\Gamma(n+a)}{\Gamma(n+b)}\sim n^{a-b}\left(1+\frac{(a-b)(a+b-1)}{2n}+\mathcal{O}\left(\frac{1}{n^2}\right)+\cdots\right),
	 \end{equation}
	 as $n\rightarrow\infty$, providing $n\neq-a,\ -a-1,\ldots$  and $n\neq-b,\ -b-1,\ldots$. Hence, the asymptotic behavior of the eigenvalues ${}^i\Lambda_n^{(\alpha,\beta,\mu)},\ i=1,2$ for $\mu\in(0,1)$ is for $ n\gg1$ as follows:
	 \[
{}^1\Lambda^{(\alpha,\beta,\mu)}_n=\frac{\Gamma(n+\beta+1)\Gamma(n+\alpha+\mu+1)}{\Gamma(n+\beta-\mu+1)\Gamma(n+\alpha+1)}\sim n^{2\mu}=\left\{
	 \begin{array}{ll}
	 n^2, & \hbox{$\mu\rightarrow1$;} \\
	 {n}, & \hbox{$\mu\rightarrow\frac{1}{2}$;} \\
	 1, & \hbox{$\mu\rightarrow0$.}
	 \end{array}
	 \right.
	 ,
	 \]
	 \[
{}^2\Lambda^{(\alpha,\beta,\mu)}_n=\frac{\Gamma(n+\alpha+1)\Gamma(n+\beta+\mu+1)}{\Gamma(n+\alpha-\mu+1)\Gamma(n+\beta+1)}\sim n^{2\mu}=\left\{
	 \begin{array}{ll}
	 n^2, & \hbox{$\mu\rightarrow1$;} \\
	 {n}, & \hbox{$\mu\rightarrow\frac{1}{2}$;} \\
	 1, & \hbox{$\mu\rightarrow0$.}
	 \end{array}
	 \right..
	 \]
So, these results indicate that the eigenvalues of FSLPs 	\eqref{EigenValue_1} and \eqref{EigenValue_2} grow sub-quadratically as $n\rightarrow\infty$. We also point out that these results for $\mu=1$ coincide with the growth of the eigenvalues of the classical Jacobi Sturm-Liouville problem.
	\item {\bf The first derivative:}\\
	For two special cases of JMFs-1 and JMFs-2, we have:
	\begin{eqnarray*}
	&&\frac{d}{dx}\left[x^{\sigma\beta}P_n^{(\alpha,\beta)}\left(2\left(\frac{x}{b}\right)^\sigma-1\right)\right]=\nonumber\\
	&&\hspace{3cm}\frac{\sigma\Gamma(n+\beta+1)}{\Gamma(n+\beta)}x^{\sigma\beta-1}P_n^{(\alpha+1,\beta-1)}\left(2\left(\frac{x}{b}\right)^\sigma-1\right),\\
	&&\frac{d}{dx}\left[(b^\sigma-x^{\sigma})^\alpha P_n^{(\alpha,\beta)}\left(2\left(\frac{x}{b}\right)^\sigma-1\right)\right]=\nonumber\\
	&&\hspace{1.5cm}\frac{-\sigma\Gamma(n+\alpha+1)}{\Gamma(n+\alpha)}x^{\sigma-1}(b^\sigma-x^{\sigma})^{\alpha-1}P_n^{(\alpha-1,\beta+1)}\left(2\left(\frac{x}{b}\right)^\sigma-1\right).
	\end{eqnarray*}
	\item {\bf The Rodrigues' formulas:}\\
	It is easy to verify that:
	\begin{eqnarray*}
	&&{}^1\mathcal{J}^{(\alpha,\beta,\mu,\sigma,\eta)}_n(x)=(b^\sigma-x^\sigma)^{-\alpha}x^{-\sigma(\mu+\eta+n)+n}\frac{(-1)^n}{b^{\sigma n}n!}\frac{d^n}{dx^n}\Big[(b^\sigma-x^\sigma)^{n+\alpha}x^{\sigma(n+\beta)}\Big],\\
	&&{}^2\mathcal{J}^{(\alpha,\beta,\sigma,\eta)}_n(x)=x^{-\sigma(\beta-\eta+n)+n}\frac{(-1)^n}{b^{\sigma n}n!}\frac{d^n}{dx^n}\Big[(b^\sigma-x^\sigma)^{n+\alpha}x^{\sigma(n+\beta)}\Big].
	\end{eqnarray*}
	\item{\bf Three-term recurrence formulas:}\\
	The JMFs-1 and JMFs-2 for $n\geq1$ can be generated by the following three-term recurrence formulas:
	\begin{equation*}
	A_n^{\alpha,\beta}\  {}^1\mathcal{J}^{(\alpha,\beta,\mu,\sigma,\eta)}_{n+1}(x)=\left({}^*B_n^{\alpha,\beta}x^\sigma-{}^*C_n^{\alpha,\beta}\right){}{}^1\mathcal{J}^{(\alpha,\beta,\mu,\sigma,\eta)}_{n}(x)-E_n^{\alpha,\beta}{}\ {}^1\mathcal{J}^{(\alpha,\beta,\mu,\sigma,\eta)}_{n-1}(x),
	\end{equation*} 
	and
	\begin{equation*}
			A_n^{\alpha,\beta}\  {}^2\mathcal{J}^{(\alpha,\beta,\sigma,\eta)}_{n+1}(x)=\left({}^*B_n^{\alpha,\beta}x^\sigma-{}^*C_n^{\alpha,\beta}\right){}{}^2\mathcal{J}^{(\alpha,\beta,\sigma,\eta)}_{n}(x)-E_n^{\alpha,\beta}{}\ {}^2\mathcal{J}^{(\alpha,\beta,\sigma,\eta)}_{n-1}(x),
	\end{equation*} 
where 
\begin{eqnarray*}
	&& {}^1\mathcal{J}^{(\alpha,\beta,\mu,\sigma,\eta)}_0(x)=x^{\sigma(\beta-\eta-\mu)},\\ &&{}^1\mathcal{J}^{(\alpha,\beta,\mu,\sigma,\eta)}_1(x)=x^{\sigma(\beta-\eta-\mu)}\left(\frac{1}{b^\sigma}(\alpha+\beta+2)x^\sigma+(\alpha+1)\right),\\
	&&{}^2\mathcal{J}^{(\alpha,\beta,\mu,\sigma,\eta)}_0(x)=x^{\sigma\eta}\left(b^\sigma-x^\sigma\right)^\alpha,\\
	&&{}^2\mathcal{J}^{(\alpha,\beta,\mu,\sigma,\eta)}_1(x)=x^{\sigma\eta}\left(b^\sigma-x^\sigma\right)^\alpha\left(\frac{1}{b^\sigma}(\alpha+\beta+2)x^\sigma+(\alpha+1)\right),\\
&&{}^*B_n^{\alpha,\beta}=\frac{2}{b^\sigma}B_n^{\alpha,\beta},\ {}^*C_n^{\alpha,\beta}=B_n^{\alpha,\beta}+C_n^{\alpha,\beta},
\end{eqnarray*}
and the sequences $A_n^{\alpha,\beta}$, $B_n^{\alpha,\beta}$, $C_n^{\alpha,\beta}$ and $E_n^{\alpha,\beta}$ are defined in \eqref{Eq_Rec_1}-\eqref{Eq_Rec_4}.
\item {\bf Othogonality:}\\
The orthogonality of JMFs-1 and JMFs-2 are given as follows: 
\begin{equation}\label{Prop4}
\int_{0}^{b}{}^1\mathcal{J}^{(\alpha,\beta,\mu,\sigma,\eta)}_n(x)\ {}^1\mathcal{J}^{(\alpha,\beta,\mu,\sigma,\eta)}_m(x)x^{\sigma-1}w_1^{(\alpha,\beta,\mu,\sigma,\eta)}(x)\,dx={}^*\gamma_n^{(\alpha,\beta)}\delta_{nm},
\end{equation}
and
\begin{equation}\label{Prop42}
\int_{0}^{b}{}^2\mathcal{J}^{(\alpha,\beta,\sigma,\eta)}_n(x)\ {}^2\mathcal{J}^{(\alpha,\beta,\sigma,\eta)}_m(x)x^{\sigma-1}w_2^{(\alpha,\beta,\sigma,\eta)}(x)\,dx={}^*\gamma_n^{(\alpha,\beta)}\delta_{nm},
\end{equation}
where $\displaystyle {}^*\gamma_n^{(\alpha,\beta)}=\frac{1}{\sigma}\left(\frac{b^\sigma}{2}\right)^{\alpha+\beta+1}\gamma_n^{(\alpha,\beta)}$, and $\gamma_n^{(\alpha,\beta)}$ is defined in \eqref{Orthog_Cons}.
\item {\bf Orthogonality of fractional derivatives:}\\
It is interesting to note that the following orthogonality properties for E-K fractional derivatives for JMFs-1 and JMFs-2  hold true. Let $l\in\Bbb{N}_0$, and take
\begin{equation*}
\phi_n(x)={}_{0}D_{x,\sigma,\eta}^{\mu+l}{}^1\mathcal{J}^{(\alpha,\beta,\mu,\sigma,\eta)}_n(x),\ \ \psi_n(x)={}_{x}D_{b,\sigma,\eta}^{\mu+l}{}^2\mathcal{J}^{(\alpha,\beta,\mu,\sigma,\eta)}_n(x),
\end{equation*}
then we  have:
\begin{eqnarray}\label{Prop5}
&&\ \ \ \ \  \int_{0}^{b}\phi_n(x)\phi_m(x)\ x^{\sigma-1}w_1^{(\alpha+\mu+l,\beta-\mu-l,\mu,\sigma,\eta-\mu-l)}(x)\,dx={}^1\theta_{n,l}^{(\alpha,\beta,\mu)}\delta_{nm},\label{OrthFrac_1}\\
&&\ \ \ \ \ \int_{0}^{b}\psi_n(x)\psi_m(x)\ {x}^{\sigma-1}w_2^{(\alpha-\mu-l,\beta+\mu+l,\sigma,\eta+\mu+l)}(x)\,dx={}^2\theta_{n,l}^{(\alpha,\beta,\mu)}\delta_{nm},\label{OrthFrac_2}
\end{eqnarray}
where
\begin{eqnarray}
&&\ \ \ \ \  {}^1\theta_{n,l}^{(\alpha,\beta,\mu)}=\frac{1}{\sigma}\left(\frac{b^\sigma}{2}\right)^{\alpha+\beta+1}\left(\frac{ \Gamma 
		\left( n+\beta+1 \right)} { \Gamma  \left( n+\beta-\mu-l+1\right)}\right)^2\gamma_n^{(\alpha+\mu+l,\beta-\mu-l)},\label{OrthFrac_1Cons}
 \\
&&\ \ \ \ \  {}^2\theta_{n,l}^{(\alpha,\beta,\mu)}=\frac{1}{\sigma}\left(\frac{b^\sigma}{2}\right)^{\alpha+\beta+1}\left(\frac{ \Gamma 
	\left( n+\alpha+1 \right)} { \Gamma  \left( n+\alpha-\mu-l+1\right)}\right)^2\gamma_n^{(\alpha-\mu-l,\beta+\mu+l)},\label{OrthFrac_2Cons}
\end{eqnarray}
and $\gamma_n^{(\alpha,\beta)}$ is defined in \eqref{Orthog_Cons}.
\end{itemize}
In the next subsection we will prove some important theorems to  establish our numerical methods based on the use of MJFs-1 and MJFs-2. 
\subsection{Approximation properties of JMFs-1 and JMFs-2}
The aim of this section is to study the approximation properties of the JMFs-1 and JMFs-2. To start, we introduce some definitions and notations. 

The weighted Sobolev space with respect to the weight function $\omega(x)>0$, and its norm are defined on $\Lambda=[0,b]$ by:
\begin{equation*}
{\bf L}^2_{\omega}(\Lambda)=\left\{u:\ \int_{\Lambda}u^2(x)\omega(x)\,dx<\infty\right\},\ \ \|u\|_\omega=\left(\int_{\Lambda}u^2(x)\omega(x)\,dx\right)^{\frac{1}{2}}.
\end{equation*}
Moreover, the non-uniformly  Jacobi-M\"untz spaces for $m\in\Bbb{N}_0$ are also defined:
\begin{equation}\label{NUMJS1}
{}^1{\bf B}^{m}_{\alpha,\beta,\mu,\sigma,\eta}(\Lambda):=\left\{u\in{\bf L}^2_{x^{\sigma-1}w_1^{(\alpha,\beta,\mu,\sigma,\eta)}}(\Lambda):\ {}_{0}D_{x,\sigma,\eta}^{\mu+l}u\in{\bf L}^2_{{}^*w_1^{(\alpha,\beta,\mu,\sigma,\eta)}}(\Lambda),\ 0\leq l\leq m
 \right\},
\end{equation}
and
\begin{equation}\label{NUMJS2}
{}^2{\bf B}^{m}_{\alpha,\beta,\mu,\sigma,\eta}(\Lambda):=\left\{u\in{\bf L}^2_{x^{\sigma-1}w_2^{(\alpha,\beta,\sigma,\eta)}}(\Lambda):\ {}_{x}D_{b,\sigma,\eta}^{\mu+l}u\in{\bf L}^2_{{}^*w_2^{(\alpha,\beta,\mu,\sigma,\eta)}}(\Lambda),\ 0\leq l\leq m
\right\},
\end{equation}
where
\begin{eqnarray}
&&{}^*w_1^{(\alpha,\beta,\mu,\sigma,\eta,l)}=x^{\sigma-1}w_1^{(\alpha+\mu+l,\beta-\mu-l,\mu,\sigma,\eta-\mu-l)},\label{FracWeigh_1}\\ \ &&{}^*w_2^{(\alpha,\beta,\mu,\sigma,\eta,l)}=x^{\sigma-1}w_2^{(\alpha-\mu-l,\beta+\mu+l,\sigma,\eta+\mu+l)}.\label{FracWeigh_2}
\end{eqnarray}
The finite dimensional Jacobi-M\"untz spaces are defined by:
\begin{eqnarray}\label{FDJMS}
&&{}^1\mathcal{F}^{(\alpha,\beta,\mu,\sigma,\eta)}_N(\Lambda):=\Big\{\phi:\ \phi(x)=x^{\sigma(\beta-\eta-\mu)}\psi(x^\sigma),\ \psi(x)\in\Bbb{P}_N,\ x\in\Lambda \Big\}\\
&&\hspace{2.5cm}=\text{span}\Big\{{}^1\mathcal{J}^{(\alpha,\beta,\mu,\sigma,\eta)}_n(x),\ 0\leq n\leq N,\ x\in\Lambda \Big\}, \label{FDJMS1}\nonumber \\
&&{}^2\mathcal{F}^{(\alpha,\beta,\sigma,\eta)}_N(\Lambda):=\Big\{\phi:\ \phi(x)=x^{\sigma\eta}(b^\sigma-x^\sigma)^\alpha\psi(x^\sigma),\ \psi(x)\in\Bbb{P}_N,\ x\in\Lambda \Big\}\\
&&\hspace{2.5cm}=\text{span}\Big\{{}^2\mathcal{J}^{(\alpha,\beta,\sigma,\eta)}_n(x),\ 0\leq n\leq N,\ x\in\Lambda,\  \Big\},\label{FDJMS2}\nonumber
\end{eqnarray}
where $\Bbb{P}_N$ stands for the set of polynomials of degree $\leq N$. 

Now, let $u\in{\bf L}^2_{w^{(\alpha,\beta,\sigma)}}(\Lambda)$, where $w^{(\alpha,\beta,\sigma)}(x)=x^{\sigma-1}w^{(\alpha,\beta)}\left(2\left(\frac{x}{b}\right)^\sigma-1\right)$ and note that $w^{(\alpha,\beta)}=(1-x)^\alpha(1+x)^\beta$. Then we can quickly expand $u(x)$ as follows:
\begin{equation}
u(x)=\sum_{k=0}^{\infty}\bar{u}_k^{(\alpha,\beta,\sigma)}P_k^{(\alpha,\beta)}\left(2\left(\frac{x}{b}\right)^\sigma-1\right),
\end{equation}
where
\begin{equation}
\bar{u}_k^{(\alpha,\beta,\sigma)}=\sigma\left(\frac{2}{b^\sigma}\right)^{\alpha+\beta+1}\frac{1}{\gamma_k^{(\alpha,\beta)}}\int_{0}^{b}u(x)P_k^{(\alpha,\beta)}\left(2\left(\frac{x}{b}\right)^\sigma-1\right)w^{(\alpha,\beta,\sigma)}(x)\,dx.
\end{equation}
and $\gamma_k^{(\alpha,\beta)}$ is defined in \eqref{Orthog_Cons}. We also have the well known Parseval identity \cite{MR2867779}:
\begin{equation}
\|u\|_{w^{(\alpha,\beta,\sigma)}}^2=\frac{1}{\sigma}\left(\frac{b^\sigma}{2}\right)^{\alpha+\beta+1}\sum_{k=0}^{\infty}\gamma_k^{(\alpha,\beta)}\left|\bar{u}_k^{(\alpha,\beta,\sigma)}\right|^2.
\end{equation}
Now, the next theorem states the completeness of the JMFs-1 and JMFs-2 in some suitable spaces.
\begin{theorem}\label{Complete}
Let $\alpha,\beta>-1$. The  sets of JMFs $\left\{{}^1\mathcal{J}^{(\alpha,\beta,\mu,\sigma,\eta)}_n(x)\right\}_{n=0}^{\infty}$ and $\left\{{}^2\mathcal{J}^{(\alpha,\beta,\sigma,\eta)}_n(x)\right\}_{n=0}^{\infty}$ construct two complete sets in spaces ${\bf L}^2_{x^{\sigma-1}w_1^{(\alpha,\beta,\mu,\sigma,\eta)}}(\Lambda)$ and ${\bf L}^2_{x^{\sigma-1}w_2^{(\alpha,\beta,\sigma,\eta)}}(\Lambda)$, respectively.
\end{theorem}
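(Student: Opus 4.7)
The plan is to reduce completeness of the JMFs to the classical completeness of Jacobi polynomials in $L^2_{w^{(\alpha,\beta)}}(-1,1)$ via a weighted change of variables. I will carry out the argument in detail only for JMFs-1; the JMFs-2 case is entirely analogous, just with the substitution $v(t) = x^{-\sigma\eta}(b^\sigma-x^\sigma)^{-\alpha} u(x)$ in place of the one below.

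First I would introduce the map $t = 2(x/b)^\sigma - 1$, so that $x=b\bigl(\tfrac{t+1}{2}\bigr)^{1/\sigma}$ is a bijection $[0,b]\to[-1,1]$, and associate to every $u\in {\bf L}^2_{x^{\sigma-1}w_1^{(\alpha,\beta,\mu,\sigma,\eta)}}(\Lambda)$ the function
\begin{equation*}
v(t) := x^{-\sigma(\beta-\eta-\mu)}\,u(x), \qquad x=b\Bigl(\tfrac{t+1}{2}\Bigr)^{1/\sigma}.
\end{equation*}
A direct change-of-variable computation (using $dt=\tfrac{2\sigma}{b^\sigma}x^{\sigma-1}\,dx$ and unwinding the definitions of $w_1^{(\alpha,\beta,\mu,\sigma,\eta)}$ and $w^{(\alpha,\beta)}$) produces the norm identity
\begin{equation*}
\int_{-1}^{1} |v(t)|^2\, w^{(\alpha,\beta)}(t)\,dt \;=\; \frac{2^{\alpha+\beta+1}\sigma}{b^{\sigma(\alpha+\beta+1)}}\,\|u\|^2_{x^{\sigma-1}w_1^{(\alpha,\beta,\mu,\sigma,\eta)}},
\end{equation*}
so $T:u\mapsto v$ is (up to a nonzero multiplicative constant) an isometric isomorphism between the weighted Hilbert spaces $ {\bf L}^2_{x^{\sigma-1}w_1^{(\alpha,\beta,\mu,\sigma,\eta)}}(\Lambda)$ and $ {\bf L}^2_{w^{(\alpha,\beta)}}(-1,1)$.

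Next I would appeal to the classical result that for $\alpha,\beta>-1$ the family $\{P_n^{(\alpha,\beta)}(t)\}_{n=0}^\infty$ is complete in ${\bf L}^2_{w^{(\alpha,\beta)}}(-1,1)$ (this is a standard consequence of the Stone–Weierstrass theorem together with positivity of the weight). Hence every $v\in{\bf L}^2_{w^{(\alpha,\beta)}}(-1,1)$ admits an expansion $v=\sum_{n\ge0} c_n P_n^{(\alpha,\beta)}$ converging in $L^2_{w^{(\alpha,\beta)}}$. Applying $T^{-1}$ and using the definition of the JMFs-1 gives
\begin{equation*}
u(x) \;=\; x^{\sigma(\beta-\eta-\mu)} v\!\left(2(x/b)^\sigma-1\right) \;=\; \sum_{n=0}^{\infty} c_n\, {}^1\mathcal{J}^{(\alpha,\beta,\mu,\sigma,\eta)}_n(x),
\end{equation*}
and by the isometry this series converges in ${\bf L}^2_{x^{\sigma-1}w_1^{(\alpha,\beta,\mu,\sigma,\eta)}}(\Lambda)$. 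This is precisely the completeness statement. Alternatively, one can phrase it as follows: if $u$ is orthogonal to every ${}^1\mathcal{J}^{(\alpha,\beta,\mu,\sigma,\eta)}_n$ in the weighted inner product, then the same isometry shows $v=Tu$ is orthogonal to every $P_n^{(\alpha,\beta)}$ in $L^2_{w^{(\alpha,\beta)}}$, whence $v\equiv 0$ and therefore $u\equiv 0$.

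I do not anticipate any genuine obstacle: the only delicate point is the bookkeeping in the change-of-variables identity for the weighted norm, which requires correctly matching the exponents in $w_1^{(\alpha,\beta,\mu,\sigma,\eta)}$ with the Jacobian $\tfrac{2\sigma}{b^\sigma}x^{\sigma-1}$ and with the factor $(1-t)^\alpha(1+t)^\beta$ after substituting $t=2(x/b)^\sigma-1$. Once that calculation is recorded, the reduction to the classical Jacobi completeness is immediate, and the JMFs-2 case proceeds along identical lines with the weight $w_2^{(\alpha,\beta,\sigma,\eta)}$ instead.
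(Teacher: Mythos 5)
Your proposal is correct and follows essentially the same route as the paper: both reduce the claim to completeness of the Jacobi polynomials by factoring out the prefactor $x^{\sigma(\beta-\eta-\mu)}$ (the paper phrases this as $x^{\sigma(\eta+\mu-\beta)}u \in {\bf L}^2_{w^{(\alpha,\beta,\sigma)}}(\Lambda)$ with the composed weight on $[0,b]$, while you push the change of variable all the way to $[-1,1]$, which is the same isometry written explicitly). Your norm identity checks out, so there is nothing to add.
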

\begin{proof}
	We start to prove the completeness of $\left\{{}^1\mathcal{J}^{(\alpha,\beta,\mu,\sigma,\eta)}_n(x)\right\}_{n=0}^{\infty}$ in corresponding ${\bf L}^2_{x^{\sigma-1}w_1^{(\alpha,\beta,\mu,\sigma,\eta)}}(\Lambda)$. The proof of the second set is fairly similar to the proof of the first one. Let $u\in{\bf L}^2_{x^{\sigma-1}w_1^{(\alpha,\beta,\mu,\sigma,\eta)}}(\Lambda)$, then we have $x^{\sigma(\eta+\mu-\beta)}u\in{\bf L}^2_{w^{(\alpha,\beta,\sigma)}}(\Lambda)$, where $w^{(\alpha,\beta,\sigma)}(x)=x^{\sigma-1}w^{(\alpha,\beta)}\left(2\left(\frac{x}{b}\right)^\sigma-1\right)$. Now, thanks to the fact that $\displaystyle\left\{P_n^{(\alpha,\beta)}\left(2\left(\frac{x}{b}\right)^\sigma-1\right)\right\}_{n=0}^\infty$ are mutually orthogonal and also complete in the space ${\bf L}^2_{w^{(\alpha,\beta,\sigma)}}(\Lambda)$, then we can expand  $x^{\sigma(\eta+\mu-\beta)}u$ as follows:
	\begin{equation}\label{Expand_1}
x^{\sigma(\eta+\mu-\beta)}u(x)=\sum_{k=0}^{\infty}\bar{v}_k^{(\alpha,\beta,\sigma)}P_k^{(\alpha,\beta)}\left(2\left(\frac{x}{b}\right)^\sigma-1\right),
	\end{equation} 
	where
	\begin{eqnarray*}
\bar{v}_k^{(\alpha,\beta,\sigma)}&=&\sigma\left(\frac{2}{b^\sigma}\right)^{\alpha+\beta+1}\frac{1}{\gamma_k^{(\alpha,\beta)}}\int_{0}^{b}x^{\sigma(\eta+\mu-\beta)}u(x)P_k^{(\alpha,\beta)}\left(2\left(\frac{x}{b}\right)^\sigma-1\right)w^{(\alpha,\beta,\sigma)}(x)\,dx\\
&=&\sigma\left(\frac{2}{b^\sigma}\right)^{\alpha+\beta+1}\frac{1}{\gamma_k^{(\alpha,\beta)}}\int_{0}^{b}u(x){}^1\mathcal{J}^{(\alpha,\beta,\mu,\sigma,\eta)}_k(x)x^{\sigma-1}w_1^{(\alpha,\beta,\mu,\sigma,\eta)}\,dx.
	\end{eqnarray*}
Now, multiplying \eqref{Expand_1} by $x^{-\sigma(\eta+\mu-\beta)}$ and the use of the last term of the above relation, we find that the set $\left\{{}^1\mathcal{J}^{(\alpha,\beta,\mu,\sigma,\eta)}_n(x)\right\}_{n=0}^{\infty}$ is completed in corresponding ${\bf L}^2_{x^{\sigma-1}w_1^{(\alpha,\beta,\mu,\sigma,\eta)}}(\Lambda)$.
\end{proof}

In this position, we are ready to introduce two important concepts in the spectral methods which are renowned as the ${\bf L}^2_{x^{\sigma-1}w_1^{(\alpha,\beta,\mu,\sigma,\eta)}}(\Lambda)$ and ${\bf L}^2_{x^{\sigma-1}w_2^{(\alpha,\beta,\sigma,\eta)}}(\Lambda)$-orthogonal projection on ${}^1\mathcal{F}^{(\alpha,\beta,\mu,\sigma,\eta)}_N(\Lambda)$ and ${}^2\mathcal{F}^{(\alpha,\beta,\sigma,\eta)}_N(\Lambda)$, respectively.
\begin{definition}\label{OrthProj}
	The ${\bf L}^2_{x^{\sigma-1}w_1^{(\alpha,\beta,\mu,\sigma,\eta)}}(\Lambda)$ and ${\bf L}^2_{x^{\sigma-1}w_2^{(\alpha,\beta,\sigma,\eta)}}(\Lambda)$-orthogonal projection on ${}^1\mathcal{F}^{(\alpha,\beta,\mu,\sigma,\eta)}_N(\Lambda)$ and ${}^2\mathcal{F}^{(\alpha,\beta,\sigma,\eta)}_N(\Lambda)$ are defined by:
	\begin{equation}
	\left({}^1\pi^{{(\alpha,\beta,\mu,\sigma,\eta)}}_Nu-u,v_N\right)_{x^{\sigma-1}w_1^{(\alpha,\beta,\mu,\sigma,\eta)}}=0,\ \ \forall v_N\in{}^1\mathcal{F}^{(\alpha,\beta,\mu,\sigma,\eta)}_N(\Lambda),
	\end{equation}
	and 
	\begin{equation}
	\left({}^2\pi^{{(\alpha,\beta,\sigma,\eta)}}_Nu-u,v_N\right)_{x^{\sigma-1}w_2^{(\alpha,\beta,\sigma,\eta)}}=0,\ \ \forall v_N\in{}^2\mathcal{F}^{(\alpha,\beta,\sigma,\eta)}_N(\Lambda),
	\end{equation}
	respectively. By definition, we immediately arrive at:
	\begin{eqnarray}
&&{}^1\pi^{{(\alpha,\beta,\mu,\sigma,\eta)}}_Nu(x)=\sum_{k=0}^{N}\hat{u}_k^{(\alpha,\beta,\mu,\sigma,\eta)}{}\ {}^1\mathcal{J}^{(\alpha,\beta,\mu,\sigma,\eta)}_k(x),\\
&& {}^2\pi^{{(\alpha,\beta,\sigma,\eta)}}_Nu(x)=\sum_{k=0}^{N}\hat{u}_k^{(\alpha,\beta,\sigma,\eta)}{}\ {}^2\mathcal{J}^{(\alpha,\beta,\sigma,\eta)}_k(x)
	\end{eqnarray}
	\end{definition}
	
An important question from the numerical analysis viewpoint  which
remains to be answered here is that: How fast the coefficients $\hat{u}_k^{(\alpha,\beta,\mu,\sigma,\eta)}$ and $\hat{u}_k^{(\alpha,\beta,\sigma,\eta)}$ decay? 

In the next theorem, we will answer the mentioned question. In the rest of this paper, we use $c$ to be a generic constant.
\begin{theorem}\label{ErrorBounds}
Let $\alpha,\beta>-1$ and  $u\in {}^i{\bf B}^{m}_{\alpha,\beta,\mu,\sigma,\eta}(\Lambda)$ with $m\in\Bbb{N}_0$, then

\begin{itemize}
	\item For $0\leq l< m\leq N$, we have:
\begin{eqnarray}
&&\left\|{}_{0}D_{x,\sigma,\eta}^{\mu+l}\left({}^1\pi^{{(\alpha,\beta,\mu,\sigma,\eta)}}_Nu-u\right)\right\|_{{}^*w_1^{(\alpha,\beta,\mu,\sigma,\eta,m)}} \hspace{4.5cm}\nonumber\\
&&\hspace{2.8cm}\leq N^{\frac{l-m}{2}}\ \sqrt{\frac{\Gamma(N+\beta-\mu-m+2)}{\Gamma(N+\beta-\mu-l+2)}}\left\|{}_{0}D_{x,\sigma,\eta}^{\mu+m}u\right\|_{{}^*w_1^{(\alpha,\beta,\mu,\sigma,\eta,m)}},
\end{eqnarray}
\begin{eqnarray}
&&\left\|{}_{x}D_{b,\sigma,\eta}^{\mu+l}\left({}^2\pi^{{(\alpha,\beta,\sigma,\eta)}}_Nu-u\right)\right\|_{{}^*w_2^{(\alpha,\beta,\mu,\sigma,\eta,m)}}\hspace{4.5cm}\nonumber\\
&&\hspace{2.8cm}\leq N^{\frac{l-m}{2}}\ \sqrt{\frac{\Gamma(N+\alpha-\mu-m+2)}{\Gamma(N+\alpha-\mu-l+2)}}\left\|{}_{x}D_{b,\sigma,\eta}^{\mu+m}u\right\|_{{}^*w_2^{(\alpha,\beta,\mu,\sigma,\eta,m)}}.
\end{eqnarray}
\item For fixed $m$, we find that:
\begin{equation}
\left\|{}_{0}D_{x,\sigma,\eta}^{\mu+l}\left({}^1\pi^{{(\alpha,\beta,\mu,\sigma,\eta)}}_Nu-u\right)\right\|_{x^{\sigma-1}w_1^{(\alpha,\beta,\mu,\sigma,\eta)}} \leq c N^{{l-m}}\left\|{}_{0}D_{x,\sigma,\eta}^{\mu+m}u\right\|_{{}^*w_1^{(\alpha,\beta,\mu,\sigma,\eta,m)}},
\end{equation}
\begin{equation}
\left\|{}_{x}D_{b,\sigma,\eta}^{\mu+l}\left({}^2\pi^{{(\alpha,\beta,\sigma,\eta)}}_Nu-u\right)\right\|_{x^{\sigma-1}w_2^{(\alpha,\beta,\sigma,\eta)}}\leq c N^{l-m}\left\|{}_{x}D_{b,\sigma,\eta}^{\mu+m}u\right\|_{{}^*w_2^{(\alpha,\beta,\mu,\sigma,\eta,m)}}.
\end{equation}
 \item For $0\leq m \leq N$ we also have:
\begin{eqnarray}
&&\|{}^1\pi^{{(\alpha,\beta,\mu,\sigma,\eta)}}_Nu-u\|_{x^{\sigma-1}w_1^{(\alpha,\beta,\mu,\sigma,\eta)}}\nonumber\\
&&\hspace{2.8cm}\leq c N^{\frac{\alpha-\beta}{2}}\ \sqrt{\frac{\Gamma(N+\beta-\mu-m+2)}{\Gamma(N+\alpha+\mu+m+2)}}\left\|{}_{0}D_{x,\sigma,\eta}^{\mu+m}u\right\|_{{}^*w_1^{(\alpha,\beta,\mu,\sigma,\eta,m)}},
\end{eqnarray}
\begin{eqnarray}
&&\|{}^2\pi^{{(\alpha,\beta,\sigma,\eta)}}_Nu-u\|_{x^{\sigma-1}w_2^{(\alpha,\beta,\sigma,\eta)}}\nonumber\\
&&\hspace{2.8cm}\leq c N^{\frac{\beta-\alpha}{2}}\ \sqrt{\frac{\Gamma(N+\alpha-\mu-m+2)}{\Gamma(N+\beta+\mu+m+2)}}\left\|{}_{x}D_{b,\sigma,\eta}^{\mu+m}u\right\|_{{}^*w_2^{(\alpha,\beta,\mu,\sigma,\eta,m)}},
\end{eqnarray}
\item For fixed $m$ we also have:
\begin{eqnarray*}
&&\|{}^1\pi^{{(\alpha,\beta,\mu,\sigma,\eta)}}_Nu-u\|_{x^{\sigma-1}w_1^{(\alpha,\beta,\mu,\sigma,\eta)}}\leq c N^{-(m+\mu)}\left\|{}_{0}D_{x,\sigma,\eta}^{\mu+m}u\right\|_{{}^*w_1^{(\alpha,\beta,\mu,\sigma,\eta,m)}},\\
&&\|{}^2\pi^{{(\alpha,\beta,\sigma,\eta)}}_Nu-u\|_{x^{\sigma-1}w_2^{(\alpha,\beta,\sigma,\eta)}}\leq c N^{-(\mu+m)}\ \left\|{}_{x}D_{b,\sigma,\eta}^{\mu+m}u\right\|_{{}^*w_2^{(\alpha,\beta,\mu,\sigma,\eta,m)}},
\end{eqnarray*}
\end{itemize}
\end{theorem}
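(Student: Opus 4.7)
The plan is to proceed in a manner analogous to the classical Jacobi spectral approximation theory, adapting the arguments to the EK-fractional setting. I treat only the JMFs-1 case; the JMFs-2 case is entirely parallel, with the roles of $\alpha,\beta$ and of left/right fractional derivatives swapped. By \cref{Complete}, any $u\in{\bf L}^2_{x^{\sigma-1}w_1^{(\alpha,\beta,\mu,\sigma,\eta)}}(\Lambda)$ admits the expansion
\begin{equation*}
u(x)=\sum_{k=0}^{\infty}\hat{u}_k^{(\alpha,\beta,\mu,\sigma,\eta)}\,{}^1\mathcal{J}^{(\alpha,\beta,\mu,\sigma,\eta)}_k(x),
\end{equation*}
so the projection error is the tail $\sum_{k>N}\hat{u}_k\,{}^1\mathcal{J}_k$. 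Applying ${}_{0}D^{\mu+l}_{x,\sigma,\eta}$ term by term via \cref{EKFD} and \cref{ExtendedEKFD} produces an expansion of the fractional derivative of the error in the shifted basis $\{{}^1\mathcal{J}^{(\alpha+\mu+l,\beta-\mu-l,\mu,\sigma,\eta-\mu-l)}_k\}_k$, with coefficients $\hat{u}_k\,\Gamma(k+\beta+1)/\Gamma(k+\beta-\mu-l+1)$.

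The next step is a Parseval-type identity. The orthogonality of the fractional derivatives \eqref{OrthFrac_1}, applied with index $l$ on the error and with index $m$ on $u$ itself, gives
\begin{align*}
\bigl\|{}_{0}D^{\mu+l}_{x,\sigma,\eta}({}^1\pi_Nu-u)\bigr\|^2_{{}^*w_1^{(\alpha,\beta,\mu,\sigma,\eta,l)}}&=\sum_{k=N+1}^{\infty}|\hat{u}_k|^2\,{}^1\theta_{k,l}^{(\alpha,\beta,\mu)},\\
\bigl\|{}_{0}D^{\mu+m}_{x,\sigma,\eta}u\bigr\|^2_{{}^*w_1^{(\alpha,\beta,\mu,\sigma,\eta,m)}}&=\sum_{k=0}^{\infty}|\hat{u}_k|^2\,{}^1\theta_{k,m}^{(\alpha,\beta,\mu)}.
\end{align*}
Dropping the terms with $k\le N$ on the right and extracting the worst ratio yields the key inequality
\begin{equation*}
\bigl\|{}_{0}D^{\mu+l}_{x,\sigma,\eta}({}^1\pi_Nu-u)\bigr\|^2\le\Bigl(\max_{k\ge N+1}\frac{{}^1\theta_{k,l}^{(\alpha,\beta,\mu)}}{{}^1\theta_{k,m}^{(\alpha,\beta,\mu)}}\Bigr)\bigl\|{}_{0}D^{\mu+m}_{x,\sigma,\eta}u\bigr\|^2.
\end{equation*}

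It remains to estimate the Gamma-function ratio. Inserting the explicit formula \eqref{OrthFrac_1Cons} and cancelling reduces $\theta_{k,l}/\theta_{k,m}$ to a product of quotients of the form $\Gamma(k+a)/\Gamma(k+b)$, each of which behaves like $k^{a-b}$ for large $k$ by the asymptotic \eqref{Asymp1}; evaluating at $k=N+1$ produces precisely the advertised factor $N^{(l-m)/2}\sqrt{\Gamma(N+\beta-\mu-m+2)/\Gamma(N+\beta-\mu-l+2)}$. The ``fixed $m$'' bound then follows by one further application of \eqref{Asymp1}. The last two groups of estimates are obtained by the same scheme but starting from the JMF orthogonality \eqref{Prop4}, which yields $\|{}^1\pi_Nu-u\|^2_{x^{\sigma-1}w_1^{(\alpha,\beta,\mu,\sigma,\eta)}}=\sum_{k>N}|\hat{u}_k|^2\,{}^*\gamma_k^{(\alpha,\beta)}$, and comparing with $\|{}_{0}D^{\mu+m}_{x,\sigma,\eta}u\|^2$ via the ratio ${}^*\gamma_k^{(\alpha,\beta)}/{}^1\theta_{k,m}^{(\alpha,\beta,\mu)}$. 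The principal technical obstacle is a monotonicity check on the Gamma-function quotient in $k$, needed to justify evaluating $\max_{k\ge N+1}$ at $k=N+1$ and thereby obtaining sharp constants; under the stated hypotheses $\alpha,\beta>-1$ with $0\le l<m$, this reduces to a routine sign analysis of the relevant exponents, after which everything else is bookkeeping.
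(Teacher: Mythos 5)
Your proposal is correct and follows essentially the same route as the paper: expand the error in the JMF basis, use the fractional-derivative orthogonality \eqref{OrthFrac_1} (resp.\ \eqref{Prop4}) as a Parseval identity, and bound the tail by the ratio ${}^1\theta_{N+1,l}^{(\alpha,\beta,\mu)}/{}^1\theta_{N+1,m}^{(\alpha,\beta,\mu)}$ (resp.\ ${}^*\gamma_{N+1}^{(\alpha,\beta)}/{}^1\theta_{N+1,m}^{(\alpha,\beta,\mu)}$) estimated via \eqref{Asymp1}. The monotonicity check you flag as the remaining obstacle does hold (the ratio of consecutive quotients differs from $1$ by a term proportional to $(m-l)(2k+\alpha+\beta+2)>0$), and is in fact a step the paper passes over silently, so your argument is if anything slightly more careful than the original.
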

\begin{proof}
By noting \eqref{OrthFrac_1},  \eqref{FracWeigh_1} and \eqref{OrthFrac_1Cons}, we immediately arrive at:
\begin{eqnarray*}
&&\left\|{}_{0}D_{x,\sigma,\eta}^{\mu+l}\left({}^1\pi^{{(\alpha,\beta,\mu,\sigma,\eta)}}_Nu-u\right)\right\|_{{}^*w_1^{(\alpha,\beta,\mu,\sigma,\eta,l)}}^2=\sum_{k=N+1}^{\infty}\left|\hat{u}_k^{(\alpha,\beta,\mu,\sigma,\eta)}\right|^2{}^1\theta_{k,l}^{(\alpha,\beta,\mu)}\\
&=&\sum_{k=N+1}^{\infty}\left|\hat{u}_k^{(\alpha,\beta,\mu,\sigma,\eta)}\right|^2\left(\frac{{}^1\theta_{k,l}^{(\alpha,\beta,\mu)}}{{}^1\theta_{k,m}^{(\alpha,\beta,\mu)}}\right){}^1\theta_{k,m}^{(\alpha,\beta,\mu)}\leq \frac{{}^1\theta_{N+1,l}^{(\alpha,\beta,\mu)}}{{}^1\theta_{N+1,m}^{(\alpha,\beta,\mu)}}\left\|{}_{0}D_{x,\sigma,\eta}^{\mu+m}u\right\|_{{}^*w_1^{(\alpha,\beta,\mu,\sigma,\eta,m)}}^2.
\end{eqnarray*}
Now, it remains to estimate the coefficient $\displaystyle \frac{{}^1\theta_{N+1,l}^{(\alpha,\beta,\mu)}}{{}^1\theta_{N+1,m}^{(\alpha,\beta,\mu)}}$. The use of \eqref{PochSymb}, \eqref{Orthog_Cons} and \eqref{OrthFrac_1Cons}, we get:
\begin{equation}\label{EQQ}
\frac{{}^1\theta_{N+1,l}^{(\alpha,\beta,\mu)}}{{}^1\theta_{N+1,m}^{(\alpha,\beta,\mu)}}=\frac{\Gamma(N+\beta-\mu-m+2)\Gamma(N+\alpha+\mu+l+2)}{\Gamma(N+\beta-\mu-l+2)\Gamma(N+\alpha+\mu+m+2)},
\end{equation}
thanks to the fact that:
\begin{eqnarray*}
\frac{\Gamma(N+\alpha+\mu+l+2)}{\Gamma(N+\alpha+\mu+m+2)}&=&\frac{1}{(N+\alpha+\mu+m+1)(N+\alpha+\mu+m)\cdots(N+\alpha+\mu+l+2)}\nonumber\\
&\leq& N^{l-m},
\end{eqnarray*}
plugging the above upper bound in Equation \eqref{EQQ}, we arrive at:
\begin{equation}\label{EQQ1}
\frac{{}^1\theta_{N+1,l}^{(\alpha,\beta,\mu)}}{{}^1\theta_{N+1,m}^{(\alpha,\beta,\mu)}}\leq N^{l-m}\ \frac{\Gamma(N+\beta-\mu-m+2)}{\Gamma(N+\beta-\mu-l+2)},
\end{equation}
which completes the proof. If $m$ is fixed, then thanks to the asymptotic formula \eqref{Asymp1}, we immediately find that: 
\begin{equation}\label{EQQ1Asymp}
\frac{{}^1\theta_{N+1,l}^{(\alpha,\beta,\mu)}}{{}^1\theta_{N+1,m}^{(\alpha,\beta,\mu)}}\leq c N^{2(m-l)}.
\end{equation} 
This completes the proof.

Now, we start to estimate:
	\begin{eqnarray*}
&&\|{}^1\pi^{{(\alpha,\beta,\mu,\sigma,\eta)}}_Nu-u\|^2_{x^{\sigma-1}w_1^{(\alpha,\beta,\mu,\sigma,\eta)}}=\sum_{k=N+1}^{\infty}|\hat{u}_k^{(\alpha,\beta,\mu,\sigma,\eta)}{}|^2\ {}^*\gamma_k^{(\alpha,\beta)}\\
&=&\sum_{k=N+1}^{\infty}\left|\hat{u}_k^{(\alpha,\beta,\mu,\sigma,\eta)}\right|^2\left(\frac{{}^*\gamma_k^{(\alpha,\beta)}}{{}^1\theta_{k,m}^{(\alpha,\beta,\mu)}}\right){}^1\theta_{k,m}^{(\alpha,\beta,\mu)}\leq \frac{{}^*\gamma_{N+1}^{(\alpha,\beta)}}{{}^1\theta_{N+1,m}^{(\alpha,\beta,\mu)}}\left\|{}_{0}D_{x,\sigma,\eta}^{\mu+m}u\right\|_{{}^*w_1^{(\alpha,\beta,\mu,\sigma,\eta,m)}}^2,
	\end{eqnarray*}
	where by using the asymptotic formula \eqref{Asymp1}, we easily find that:
\begin{eqnarray*}\label{EQQL2}
&&\frac{{}^*\gamma_{N+1}^{(\alpha,\beta)}}{{}^1\theta_{N+1,m}^{(\alpha,\beta,\mu)}}=\frac{\Gamma(N+\alpha+2)\Gamma(N+\beta-\mu-m+2)}{\Gamma(N+\beta+2)\Gamma(N+\alpha+\mu+m+2)}\nonumber\\
&&\hspace{1.5cm}\leq c N^{\alpha-\beta}\frac{\Gamma(N+\beta-\mu-m+2)}{\Gamma(N+\alpha+\mu+m+2)}\leq cN^{-2(m+\mu)},\ \text{(if $m$ is fixed)},
\end{eqnarray*}
which concludes the proof.	
\end{proof}
\begin{remark}
An important issue which we emphasize here is that the previous theorem, in fact, states that the orthogonal projection ${}^i\pi^{{(\alpha,\beta,\mu,\sigma,\eta)}}_Nu$ is the best approximation of $u$ in both spaces  ${\bf L}^2_{x^{\sigma-1}w_i^{(\alpha,\beta,\mu,\sigma,\eta)}}(\Lambda)$	 and ${}^i{\bf B}^{m}_{\alpha,\beta,\mu,\sigma,\eta}(\Lambda)$.  
\end{remark}
\subsection{Gauss-Jacobi-M\"untz quadrature rules}
In this subsection, two new quadrature rules based on JMFs-1 and JMFs-2 are introduced. To do so, we denote:
\begin{eqnarray}
&&\Bbb{P}_{N}^{(\sigma)}:=\text{span}\left\{x^{k\sigma}:\ k=0,1,\ldots,N\right\},\label{MuntzSp1}\\
&&\Bbb{P}_{N}^{(\beta,\mu,\sigma,\eta)}:=\text{span}\left\{x^{2\sigma(\beta-\mu-\eta)+k\sigma}:\ k=0,1,\ldots,N\right\},\label{MuntzSp2}\\
&&\Bbb{P}_{N}^{(\alpha,\sigma,\eta)}:=\text{span}\left\{(b^\sigma-x^\sigma)^{2\alpha}x^{2\sigma\eta+k\sigma}:\ k=0,1,\ldots,N\right\}.\label{MuntzSp3}
\end{eqnarray}
In the next theorem two new quadrature rules based on the JMFs-1 and JMFs-1 are presented.
\begin{theorem}\label{MintzJacQuad}
	Let $\sigma>0$ and $\alpha,\beta>-1$. Let  $x_j^{(\alpha,\beta)}$ and $w_j^{(\alpha,\beta)}$ for $j=0,1,2\ldots,n$ be the Gauss-Jacobi nodes and weights with parameter $(\alpha,\beta)$ on $[-1,1]$, respectively. Then we have the following quadrature rule:
\begin{equation}\label{MuntsQuad}
	\int_{0}^{b}f(x)x^{\sigma(\beta+1)-1}(b^{\sigma}-x^{\sigma})^{\alpha}\,dx=\sum_{j=0}^{n}w_j^{(\alpha,\beta,\sigma)}f\left(x_j^{(\alpha,\beta,\sigma)}\right)+E_n[f],
\end{equation}
where $E_n[f]$ stands for the quadrature error. Then the above quadrature formula is exact (i.e., $E_n[f]=0$) for any $f(x)\in\Bbb{P}_{2n+1}^{(\sigma)}$, where
\begin{equation}\label{MuntsQuadNodWei}
w_j^{(\alpha,\beta,\sigma)}=\frac{1}{\sigma}\left(\frac{b^\sigma}{2}\right)^{\alpha+\beta+1}w_j^{(\alpha,\beta)},\ \ \ \ \ \ \  x_j^{(\alpha,\beta,\sigma)}=b\left(\frac{1+x_j^{(\alpha,\beta)}}{2}\right)^{\frac{1}{\sigma}}. 
\end{equation}
Also, the Gauss-Jacobi-M\"untz quadrature rules of the first and second types (which are denoted respectively by GJMQR-1 and GJMQR-2) are as follows:
\begin{equation}\label{JacMunts1Quad}
\int_{0}^{b}f(x)x^{\sigma(2(\eta+\mu)-\beta+1)-1}\left(b^\sigma-x^\sigma\right)^{\alpha}\,dx=\sum_{j=0}^{n}w_j^{(\alpha,\beta,\mu,\sigma,\eta)}f\left(x_j^{(\alpha,\beta,\sigma)}\right)+{}^1E_n[f],
\end{equation}
and
\begin{equation}\label{JacMunts2Quad}
\int_{0}^{b}f(x)x^{\sigma(\beta-2\eta+1)-1}\left(b^\sigma-x^\sigma\right)^{-\alpha}\,dx=\sum_{j=0}^{n}w_j^{(\alpha,\beta,\sigma,\eta)}f\left(x_j^{(\alpha,\beta,\sigma)}\right)+{}^2E_n[f].
\end{equation}
 The above quadrature formulas \eqref{JacMunts1Quad} and \eqref{JacMunts2Quad} are exact (i.e., ${}^iE_n[f]=0$) for any $f(x)\in\Bbb{P}_{2n+1}^{(\beta,\mu,\sigma,\eta)}$ and $f(x)\in\Bbb{P}_{2n+1}^{(\alpha,\sigma,\eta)}$, where
 \begin{eqnarray}\label{JacMunts1-2QuadNodWei}
 &&w_j^{(\alpha,\beta,\mu,\sigma,\eta)}={w_j^{(\alpha,\beta,\sigma)}}\ {\left(x_j^{(\alpha,\beta,\sigma)}\right)^{2\sigma(\eta+\mu-\beta)}},\ \ \ \ \ \ \\
 && w_j^{(\alpha,\beta,\sigma,\eta)}= w_j^{(\alpha,\beta,\sigma)}}\  {\left(b^\sigma-\left(x_j^{(\alpha,\beta,\sigma)}\right)^{\sigma}\right)^{-2\alpha}}{\left(x_j^{(\alpha,\beta,\sigma)}\right)^{-2\sigma\eta}.
 \end{eqnarray}
\end{theorem}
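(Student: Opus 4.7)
The plan is to establish the first quadrature \eqref{MuntsQuad} by a change of variables that reduces the Müntz-type integral on $[0,b]$ to an ordinary Gauss-Jacobi integral on $[-1,1]$, and then deduce \eqref{JacMunts1Quad} and \eqref{JacMunts2Quad} as immediate corollaries by absorbing the extra Müntz factors into the integrand.

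First, I would perform the substitution $t=2(x/b)^\sigma-1$, equivalently $x=b\left((1+t)/2\right)^{1/\sigma}$, in the integral $\int_0^b f(x)x^{\sigma(\beta+1)-1}(b^\sigma-x^\sigma)^\alpha\,dx$. A short calculation shows that $x^\sigma=b^\sigma(1+t)/2$, so $b^\sigma-x^\sigma=(b^\sigma/2)(1-t)$, and the combination $x^{\sigma-1}\,dx$ simplifies to the clean differential $(b^\sigma/(2\sigma))\,dt$. Collecting these factors yields
\begin{equation*}
\int_0^b f(x)x^{\sigma(\beta+1)-1}(b^\sigma-x^\sigma)^\alpha\,dx=\frac{1}{\sigma}\left(\frac{b^\sigma}{2}\right)^{\alpha+\beta+1}\int_{-1}^{1}F(t)\,(1-t)^\alpha(1+t)^\beta\,dt,
\end{equation*}
where $F(t):=f\bigl(b((1+t)/2)^{1/\sigma}\bigr)$. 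Applying the standard $(n{+}1)$-point Gauss-Jacobi rule on $[-1,1]$ to the right-hand side gives \eqref{MuntsQuad} with the nodes and weights in \eqref{MuntsQuadNodWei}. For exactness, observe that if $f(x)=x^{k\sigma}$ for $0\leq k\leq 2n+1$, then $F(t)=b^{k\sigma}((1+t)/2)^k$ is a polynomial in $t$ of degree exactly $k\leq 2n+1$, which is integrated exactly by the Gauss-Jacobi rule; by linearity, the formula is exact for every $f\in\Bbb{P}_{2n+1}^{(\sigma)}$.

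Next, to obtain \eqref{JacMunts1Quad}, I would rewrite
\begin{equation*}
x^{\sigma(2(\eta+\mu)-\beta+1)-1}(b^\sigma-x^\sigma)^\alpha=x^{2\sigma(\eta+\mu-\beta)}\cdot x^{\sigma(\beta+1)-1}(b^\sigma-x^\sigma)^\alpha,
\end{equation*}
and apply \eqref{MuntsQuad} to the auxiliary function $g(x):=x^{2\sigma(\eta+\mu-\beta)}f(x)$. The weights then pick up the factor $\bigl(x_j^{(\alpha,\beta,\sigma)}\bigr)^{2\sigma(\eta+\mu-\beta)}$, producing exactly the formula for $w_j^{(\alpha,\beta,\mu,\sigma,\eta)}$ in \eqref{JacMunts1-2QuadNodWei}. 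Exactness holds whenever $g\in\Bbb{P}_{2n+1}^{(\sigma)}$, and the definition \eqref{MuntsQuad} of $\Bbb{P}_{2n+1}^{(\beta,\mu,\sigma,\eta)}$ is precisely tailored so that $f\in\Bbb{P}_{2n+1}^{(\beta,\mu,\sigma,\eta)}$ iff $g\in\Bbb{P}_{2n+1}^{(\sigma)}$.

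The derivation of \eqref{JacMunts2Quad} is structurally identical: I would factor
\begin{equation*}
x^{\sigma(\beta-2\eta+1)-1}(b^\sigma-x^\sigma)^{-\alpha}=x^{-2\sigma\eta}(b^\sigma-x^\sigma)^{-2\alpha}\cdot x^{\sigma(\beta+1)-1}(b^\sigma-x^\sigma)^\alpha,
\end{equation*}
apply \eqref{MuntsQuad} to $g(x):=x^{-2\sigma\eta}(b^\sigma-x^\sigma)^{-2\alpha}f(x)$, and read off the weight $w_j^{(\alpha,\beta,\sigma,\eta)}$ by evaluating this prefactor at $x_j^{(\alpha,\beta,\sigma)}$. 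The class $\Bbb{P}_{2n+1}^{(\alpha,\sigma,\eta)}$ is defined precisely so that multiplication by $x^{-2\sigma\eta}(b^\sigma-x^\sigma)^{-2\alpha}$ carries it into $\Bbb{P}_{2n+1}^{(\sigma)}$, hence exactness transfers from \eqref{MuntsQuad}. There is no real analytic obstacle here; the only thing to watch carefully is the bookkeeping of the exponents of $x$ and $(b^\sigma-x^\sigma)$ when verifying the factorizations and matching the node/weight formulas in \eqref{JacMunts1-2QuadNodWei}.
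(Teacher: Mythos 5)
Your proof is correct and supplies exactly the change-of-variables argument that the paper leaves implicit (its own proof of this theorem reads only ``The proof is straightforward''). The substitution $t=2(x/b)^\sigma-1$ reproduces the stated nodes and weights in \eqref{MuntsQuadNodWei}, and your factorizations of the Müntz weights into a prefactor times $x^{\sigma(\beta+1)-1}(b^\sigma-x^\sigma)^\alpha$ correctly yield \eqref{JacMunts1-2QuadNodWei} together with the exactness classes $\Bbb{P}_{2n+1}^{(\beta,\mu,\sigma,\eta)}$ and $\Bbb{P}_{2n+1}^{(\alpha,\sigma,\eta)}$.
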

\begin{proof}
	The proof is straightforward. 
\end{proof}
It should be noted that the nodes and weights of the newly generated quadrature rules are dependent on two parameters $b$ and $\sigma$.  Moreover, the nodes and weights $x_j^{(\alpha,\beta,\sigma)}$, $w_j^{(\alpha,\beta,\sigma)}$ for $\sigma=1$ reduce to the classical Jacobi-Gauss nodes and weights on $[0,b]$ (see blue filled circle in \cref{Fig-1}). 

A natural question comes to our mind is that: What is the effect of the parameter $\sigma$ on the distribution of the Jacobi-Gauss nodes and weights $x_j^{(\alpha,\beta)}$ and $w_j^{(\alpha,\beta)}$ on $[-1,1]$?

To answer this question,  the quadratures' nodes and 
weights $x_j^{(\alpha,\beta,\sigma)}$, $w_j^{(\alpha,\beta,\sigma)}$, $w_j^{(\alpha,\beta,\mu,\sigma,\eta)}$ and $w_j^{(\alpha,\beta,\sigma,\eta)}$ with values $\alpha=0.5,\ \beta=1.5,\ \eta=2,\ \mu=0.5$, $n=50$ for some values of $\sigma\in(0,2]$ on the domain $[0,10]$ are plotted in \cref{Fig-1}. 
\begin{figure}[htbp]
	\vspace{-2cm}
	\centering
	\includegraphics[width=6.5cm,height=8.5cm,keepaspectratio=true]{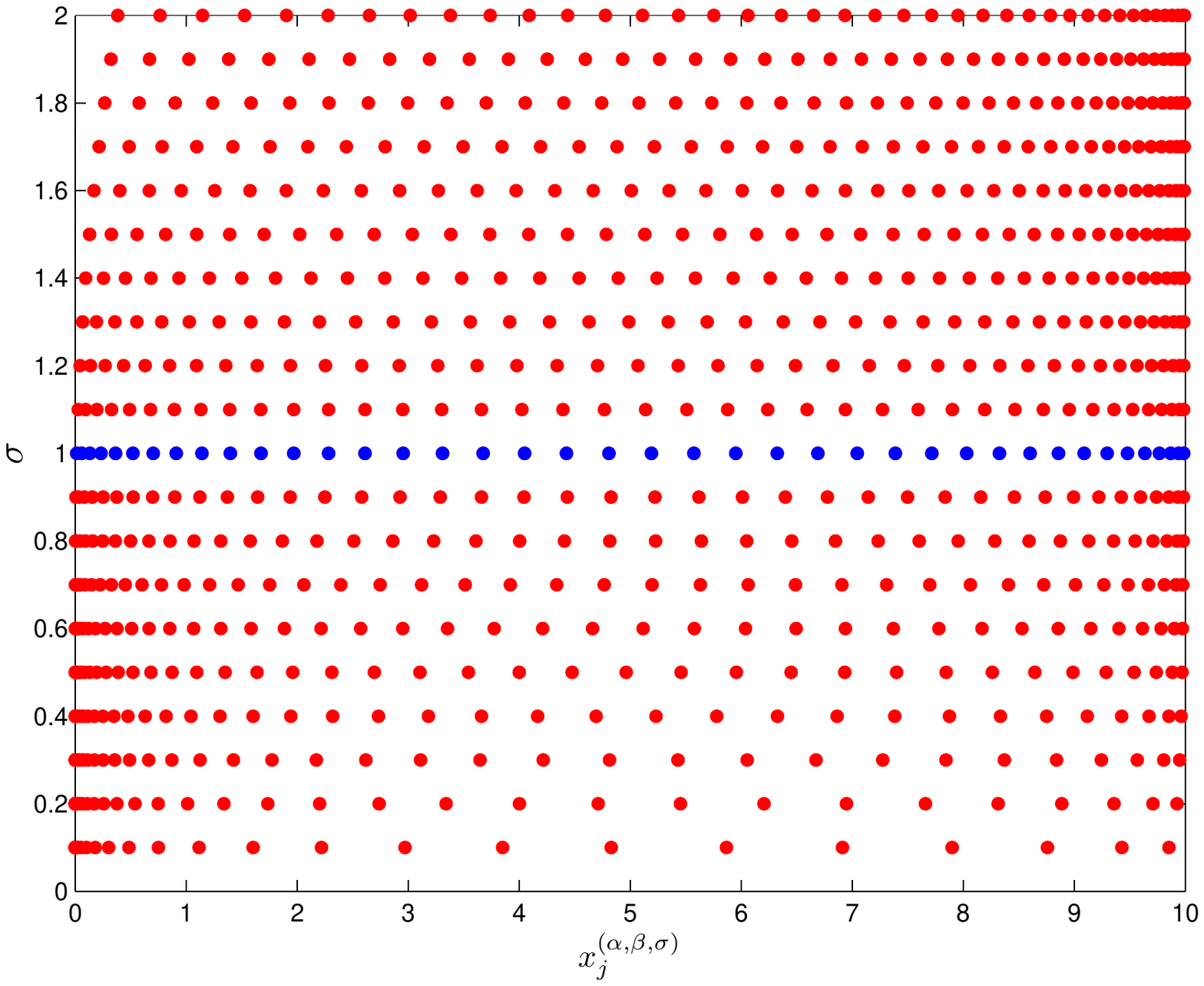}\includegraphics[width=6.5cm,height=8.5cm,keepaspectratio=true]{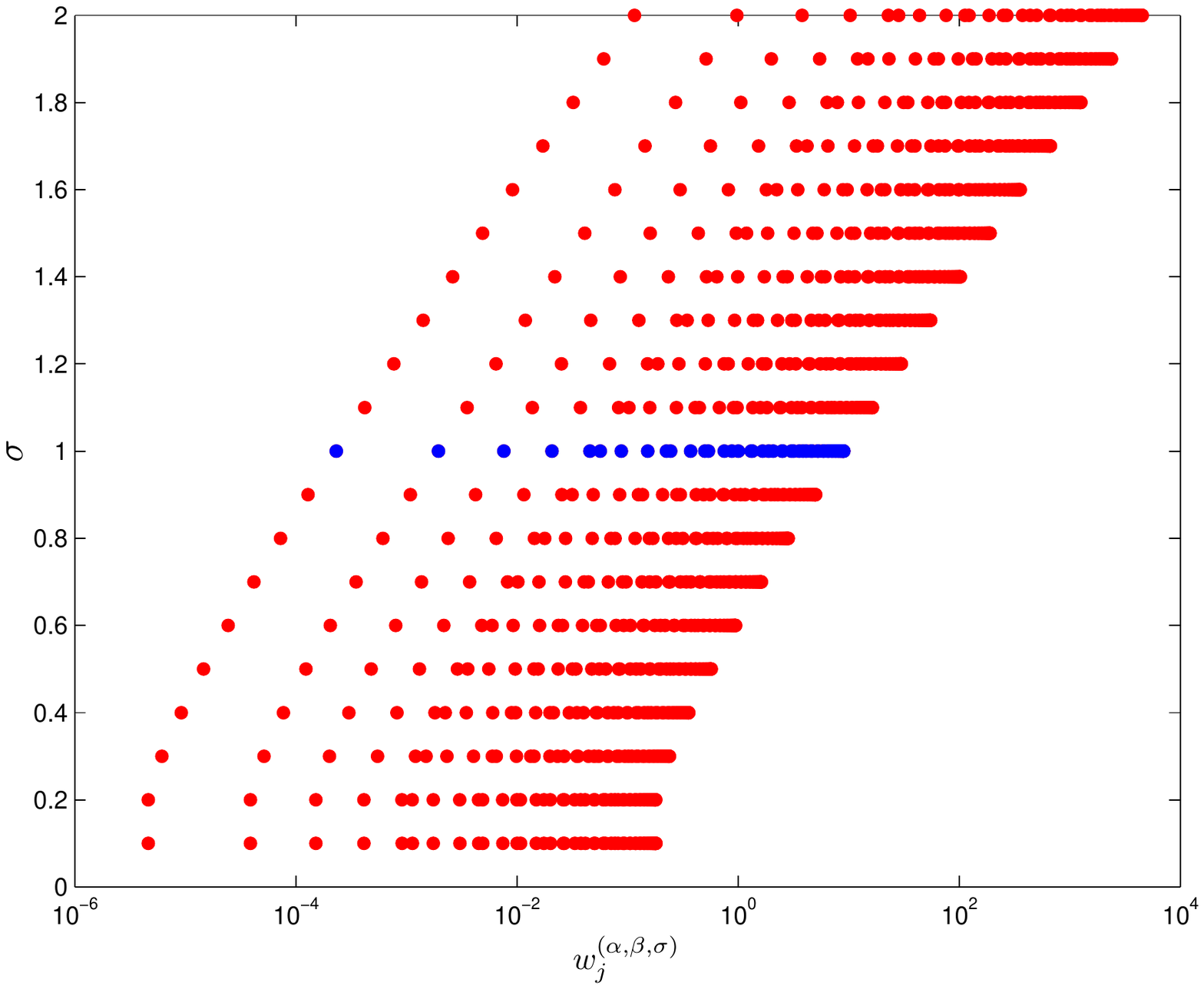}\vspace{-3.8cm}
	\includegraphics[width=6.5cm,height=8.5cm,keepaspectratio=true]{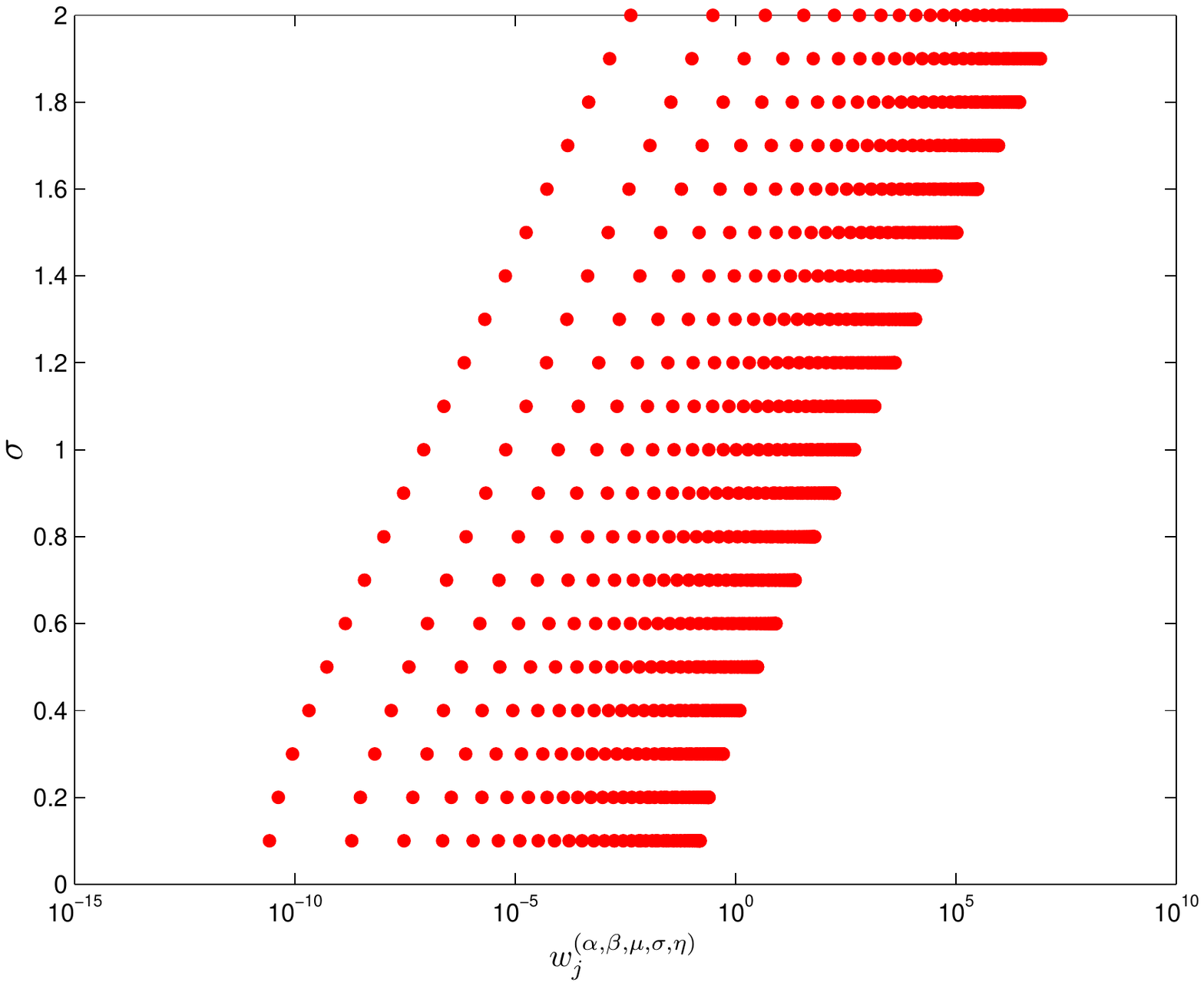}\includegraphics[width=6.5cm,height=8.5cm,keepaspectratio=true]{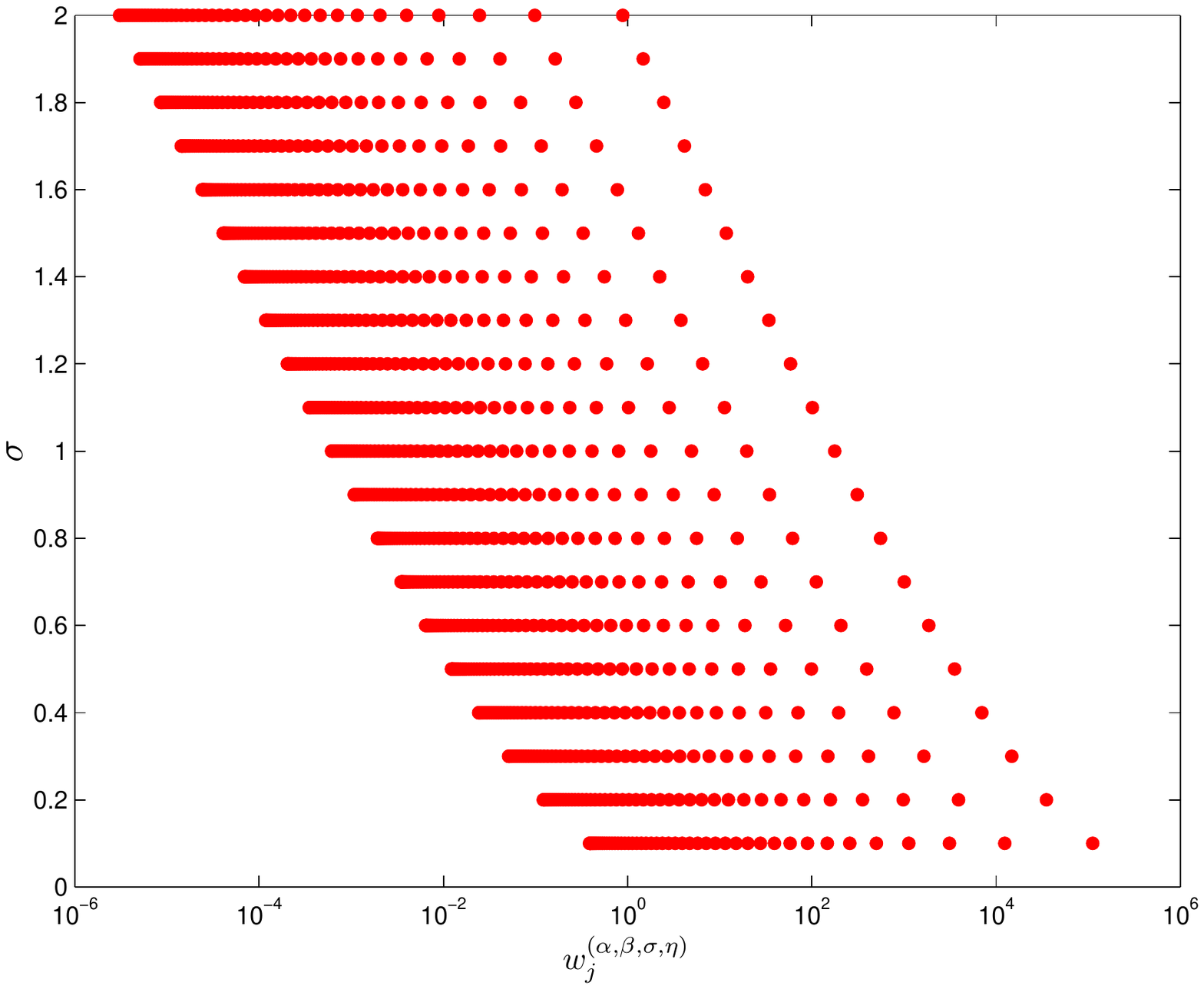}
	\vspace{-2.5cm}
	\caption{The distribution of the newly generated quadratures' nodes and weights (in linear-log scale) for various values of $\sigma\in(0,2]$.}
	\label{Fig-1}
\end{figure}

We can observe from \cref{Fig-1} that when the parameter $\sigma$ tends to $0$, then the nodes are clustered to $x=0$ and when $\sigma$ tends from $1$ to $2$, then the nodes are clustered to $x=b$. Therefore, the $\sigma$ parameter can give the opportunity for the users to cluster the collocation points according to their needs.

\section{Some applications of JMFs-1 and JMFs-2}\label{sec:experiments}
This section is devoted to provide some applications of the introduced basis functions. So, this section is divided into two subsections. The first part is:   Application to fractional differential equations and the second part is: Application to fractional partial differential equations.
\subsection{Application to fractional differential equations}
As the first application of JMFs-1 and JMFs-2, we will use JMFs-1  to solve a simple fractional differential equation. 
\begin{example}\label{Ex-1}
	For the first example, consider the following steady-state fractional differential equation:
	\begin{equation}\label{EqEx_1}
	K_2\ {}_{0}D_{x,\sigma,\eta}^{\mu}y(x)	+K_1\  y(x)=f(x),\ y(0)=y'(0)=0,\  1<\mu<2.
	\end{equation}
	To solve this problem numerically, we approximate the exact solution as follows:
	\begin{equation}\label{Spect_1}
y(x)\approx y_n(x)=\sum_{k=0}^{n} a_k\ {}^1\mathcal{J}^{(\alpha,\beta,\mu,\sigma,\eta)}_k(x),
	\end{equation}
	where the parameters $\beta,\ \mu,\ \eta$ are chosen such that $y_n(0)=y_n'(0)=0$. By substituting $y_n(x)$ into \cref{EqEx_1} and collocating both sides at $\{x_j\}_{j=0}^n=\left\{x_j^{(\alpha,\beta,\sigma)}\right\}_{j=0}^n$ which is defined in \eqref{MuntsQuadNodWei}, we get:
	\begin{equation}
	\left(K_2\ {\bf D^\mu}+K_1\ {\bf M}\right){\bf a}={\bf F},
	\end{equation}
	where for $j,k=0,1,2,\ldots,n,$, we have:
		\begin{eqnarray}
		&&{\bf M}=(m_{j,k}),\  m_{j,k}={}^1\mathcal{J}^{(\alpha,\beta,\mu,\sigma,\eta)}_k(x_j),\ \label{MatM}\\
		&&
		{\bf D^\mu}=(d_{j,k})=\frac{\Gamma(k+\beta+1)}{\Gamma(k+\beta-\mu+1)}{}^1\mathcal{J}^{(\alpha+\mu,\beta-\mu,\mu,\sigma,\eta-\mu)}_k(x_j),\label{MatDiffM}\\
		&& {\bf a}=[a_0\ a_1 \ldots\ a_n]^T,\ {\bf F}=[f(x_0)\ f(x_1) \ldots\ f(x_n)]^T.
		\end{eqnarray}
		Finally, the approximate solution ${\bf Y}$ is obtained as:
	\begin{equation}
{\bf Y}={\bf M}\,{\bf a},\ \text{where} \  {\bf Y}=[y(x_0)\ y(x_1) \ldots\ y(x_n)]^T,\ \ {\bf a}=\left(K_2\ {\bf D^\mu}+K_1\ {\bf M}\right)^{-1}{\bf F}.
	\end{equation} 
Here, we solve the problem \cref{EqEx_1} numerically when that the exact solution  is $y(x)={x}^{\sigma\,\nu}+7\,{x}^{2\,\sigma\,\nu}$.

For a more accurate comparison, we also solve this problem by the use of the corresponding M\"untz basis functions $\left\{x^{\sigma(\beta-\eta-\mu+k)}\right\}_{k=0}^n$. Maximum errors together with the condition numbers of the coefficient matrix  for various values of $n=1:1:100$ with $\ \alpha=0.5,\ \beta=1.5,\ \eta=-3,\ \sigma=0.5,\ \mu=1.5,\ b=1$ and $\nu=3$ are shown in  the first row of \cref{Fig-3}. In the second row of \cref{Fig-3}, the Fourier coefficients $a_k,\  k=1:1:n$ with $n=100$ for both cases JMFs-1 and corresponding  M\"untz basis functions for $ \alpha=0.5,\ \beta=1.5,\ \eta=-3,\ \sigma=0.5,\ \mu=1.5,\  \nu=3$ when $x\in[0,1]$ are plotted. The following facts can be clearly observed  from \cref{Fig-3}:
\begin{itemize}
\item The use of JMFs-1 not only (may) leads to have a stable numerical scheme but also it reduces the condition numbers of the coefficient matrix substantially. 
\item The use of the M\"untz basis functions $\left\{x^{\sigma(\beta-\eta-\mu+k)}\right\}_{k=0}^n$ may results that their Fourier coefficients $a_k$ are evaluated in an unstable manner. 
\end{itemize} 
\begin{figure}[htbp]
	\vspace{-2.5cm}
	\centering
	\includegraphics[width=6.5cm,height=8.5cm,keepaspectratio=true]{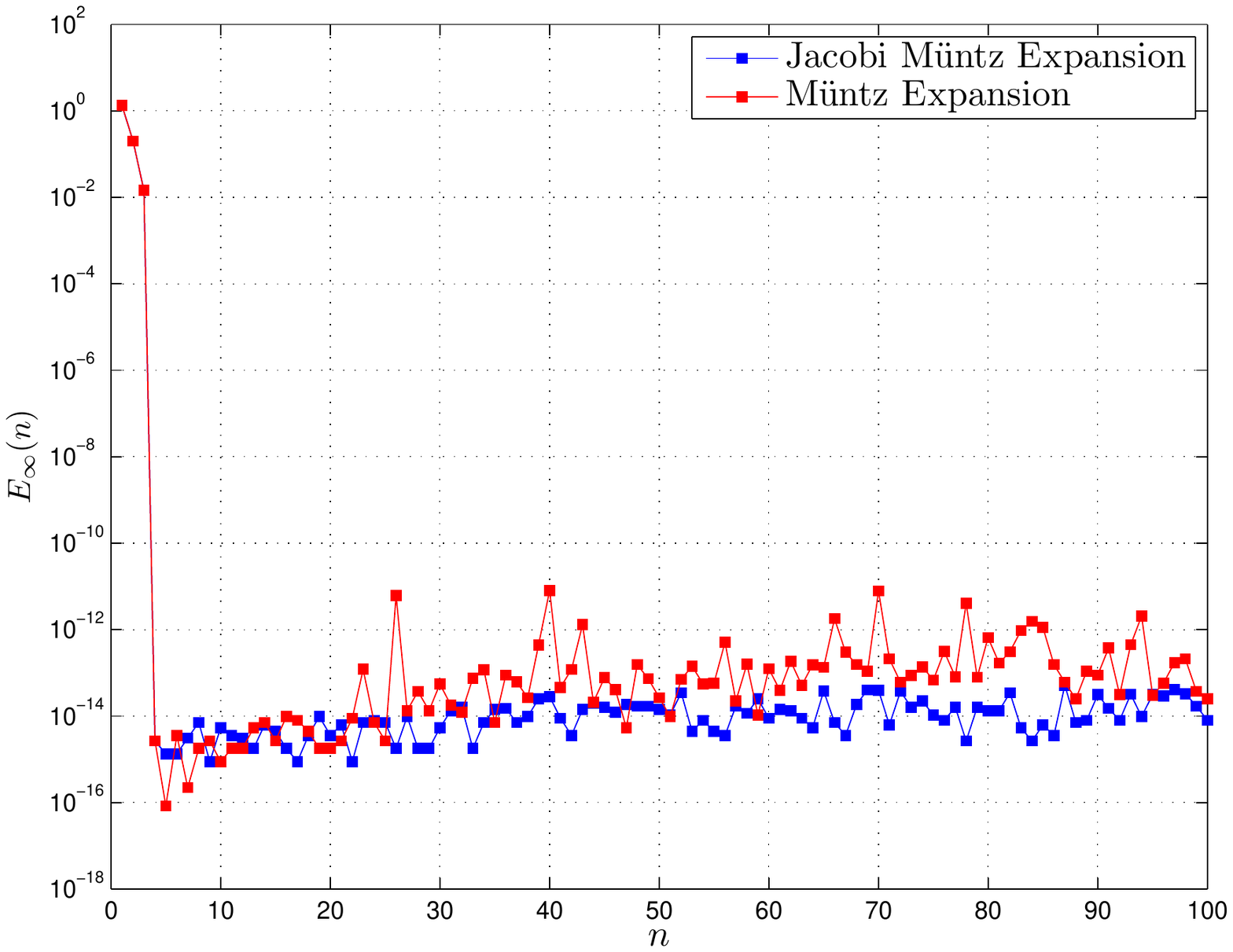}\includegraphics[width=6.5cm,height=8.5cm,keepaspectratio=true]{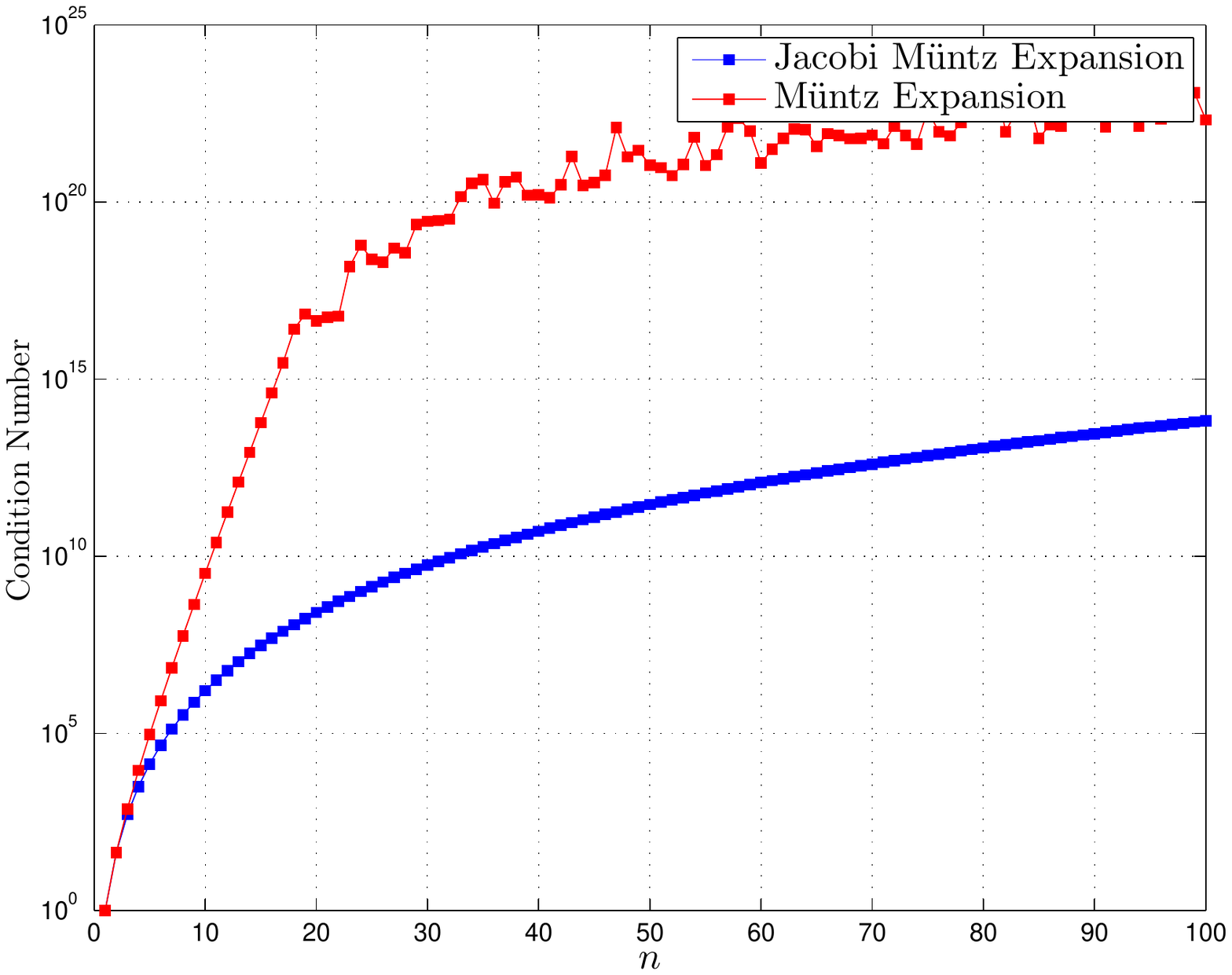}\\\vspace{-3.5cm}
		\centering
	\includegraphics[width=6.5cm,height=8.5cm,keepaspectratio=true]{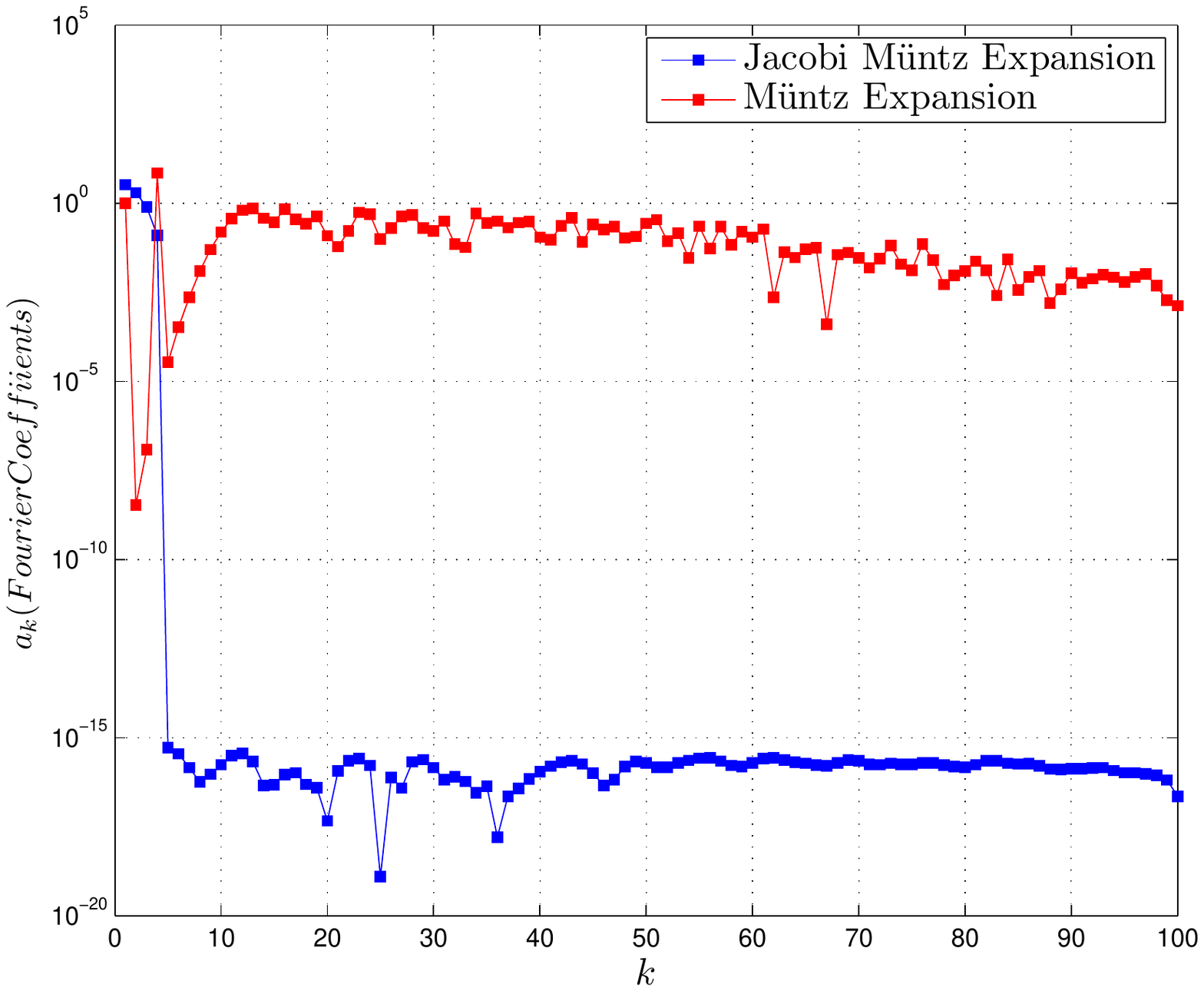}\vspace{-2.5cm}
	\caption{The first row: maximum errors together with the condition numbers (of the coefficient matrix) with $n=1:1:100,\ \alpha=0.5,\ \beta=1.5,\ \eta=-3,\ \sigma=0.5,\ \mu=1.5,\ b=1$ and $\nu=3$ in log-linear scale for \cref{Ex-1}. The second row: the Fourier coefficients $a_k,\  k=1:1:n$ with $n=100$ obtained by using the JMFs-1 and M\"untz basis functions for  $x\in[0,1]$.}
	\label{Fig-3}
\end{figure}

\end{example}

\subsection{Application to fractional partial differential equations}
\begin{example}\label{Ex-3}
	Consider the following fractional partial differential equation:
	\begin{eqnarray}\label{EqEx_3}
	\frac{\partial}{\partial t}u(x,t)&=& d(x,t)\  {}_{0}D_{x,\sigma,\eta}^{\mu}u(x,t)	+s(x,t),\ x\in[0,b],\ t\in[0,T], \label{EqEx_3-1} \\
	 u(0,t)&=& \frac{\partial}{\partial x}u(0,t)=0,\ u(0,x)=f(x),\   1<\mu<2, \label{EqEx_3-2}
	\end{eqnarray}
	where $u(x,t)$ is an unknown function and the functions $d(x,t)$ and $s(x,t)$ are arbitrary given functions.
	
	We start to approximate the unknown function $u(x,t)$ in problem \eqref{EqEx_3-1}-\eqref{EqEx_3-2} by $\tilde u_n(x,t)$ as follows:
	\begin{equation}
	u(x,t)\simeq \tilde u_n(x,t)=\sum_{k=0}^na_k(t)\ {}^1\mathcal{J}^{(\alpha,\beta,\mu,\sigma,\eta)}_k(x),
	\end{equation}  
	where the parameters $\beta,\ \mu,\ \eta$ are chosen such that $\tilde u_n(0,t)=\frac{\partial}{\partial x}\tilde u_n(0,t)=0$. Plugging $\tilde u_n(x,t)$ into \eqref{EqEx_3-1}-\eqref{EqEx_3-2} and collocating both sides at $\{x_j\}_{j=0}^n=\left\{x_j^{(\alpha,\beta,\sigma)}\right\}_{j=0}^n$ which is defined in \eqref{MuntsQuadNodWei} yield:
	\begin{subequations}\label{main-IVP-im}
		\begin{eqnarray}
		&&\mathbf M\,\dot{\mathbf{a}}(t)=\mathbf C(t)\  {\bf D^\mu} \ \mathbf {a}(t)+\mathbf{s}(t) ,\\
		&&\mathbf M\, \mathbf a(0)=\,\mathbf F,
		\end{eqnarray}
	\end{subequations}
		where ${\bf M}$ and ${\bf D^\mu}$ are defined in \eqref{MatM} and \eqref{MatDiffM}, respectively. Moreover, 
		\[
		\mathbf a(t)=\begin{bmatrix}
			a_0(t)\\
			a_1(t)\\
			\vdots\\
			a_n(t)
		\end{bmatrix},\ 
		\mathbf s(t)=\begin{bmatrix}
			s(x_0,t)\\
			s(x_1,t)\\
			\vdots\\
			s(x_n,t)
		\end{bmatrix},
		 \ \mathbf C(t)=\text{diag}\left(d(x_0,t),\dots,d(x_n,t)\right),\ \mathbf F=\begin{bmatrix}
		f(x_0)\\
		f(x_1)\\
		\vdots\\
		f(x_n)
		\end{bmatrix}.
		\]	
		Because of the fact that the matrix ${\bf M}$ is invertible, then we can rewrite \cref{main-IVP-im} as follows:
			\begin{subequations}\label{main-IVP-inverse}
				\begin{eqnarray}
				&&\,\dot{\mathbf{a}}(t)=\mathbf M^{-1}\left[\mathbf C(t)\  {\bf D^\mu} \ \mathbf {a}(t)+\mathbf{s}(t)\right] ,\\
				&&\mathbf a(0)=\,\mathbf M^{-1} \mathbf F,
				\end{eqnarray}
			\end{subequations}
			The obtained  system of ordinary differential equations is solved numerically by  \textsc{Matlab}'s  \textsc{ode45} routine with $\textsc{RelTol}=10^{-16},\ \textsc{AbsTol}=10^{-16}$. As a simple example, we take $u(x,t)=x^{\sigma\nu}\sin(t^2)$ and $\displaystyle d(x,t)=-\frac{1}{1+xt}$. In \cref{Fig-4}, approximate solution versus absolute error for $n=10,\ b=1,\ T=5,\ \alpha=0.5,\   \beta=3.5,\ \mu=1.5,\ \sigma=0.5,\ \eta=-1,\  \nu=7$.
	\begin{figure}[htbp]
		\vspace{-2.5cm}
		\centering
		\includegraphics[width=6.5cm,height=8.5cm,keepaspectratio=true]{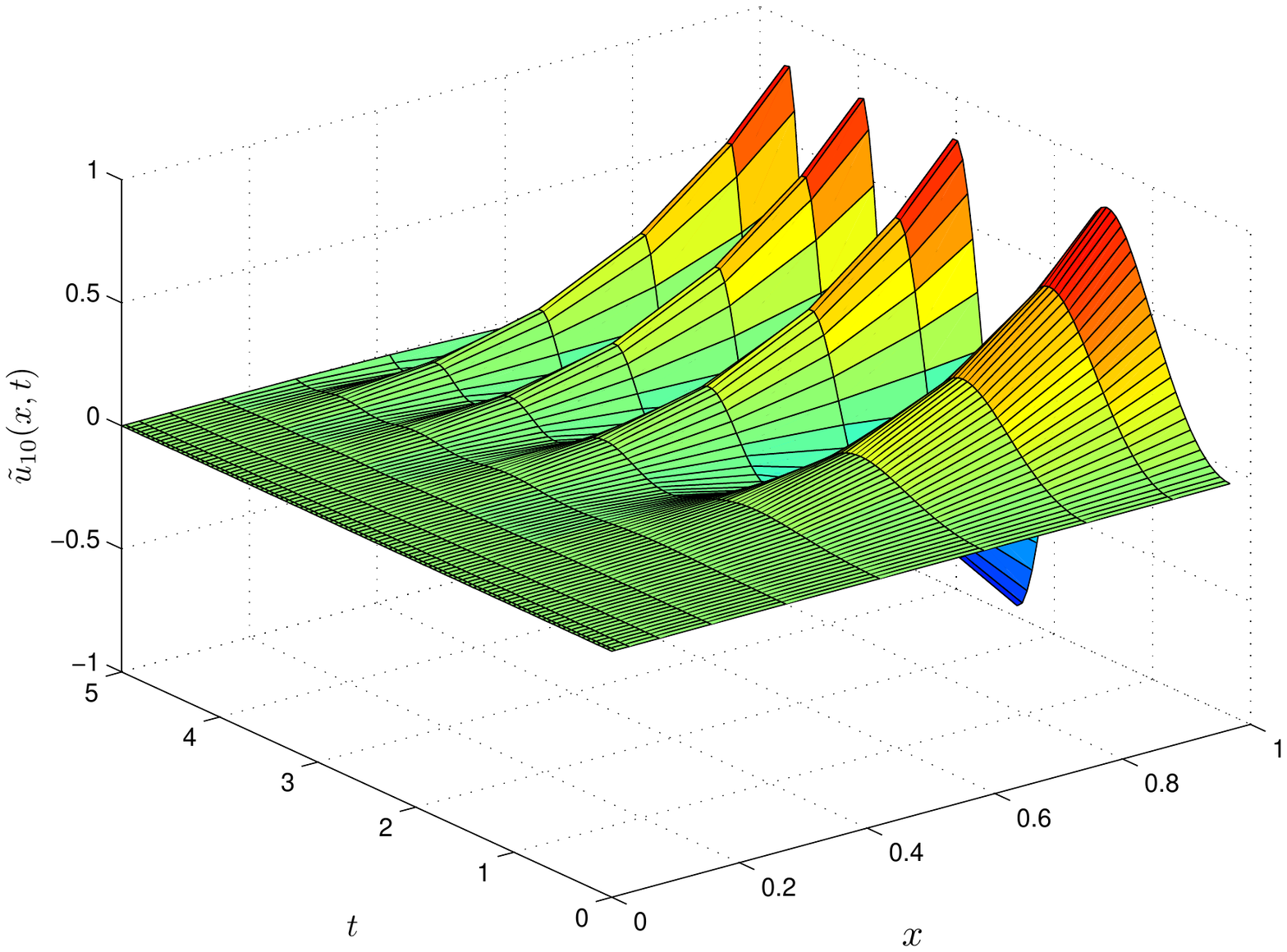}\includegraphics[width=6.5cm,height=8.5cm,keepaspectratio=true]{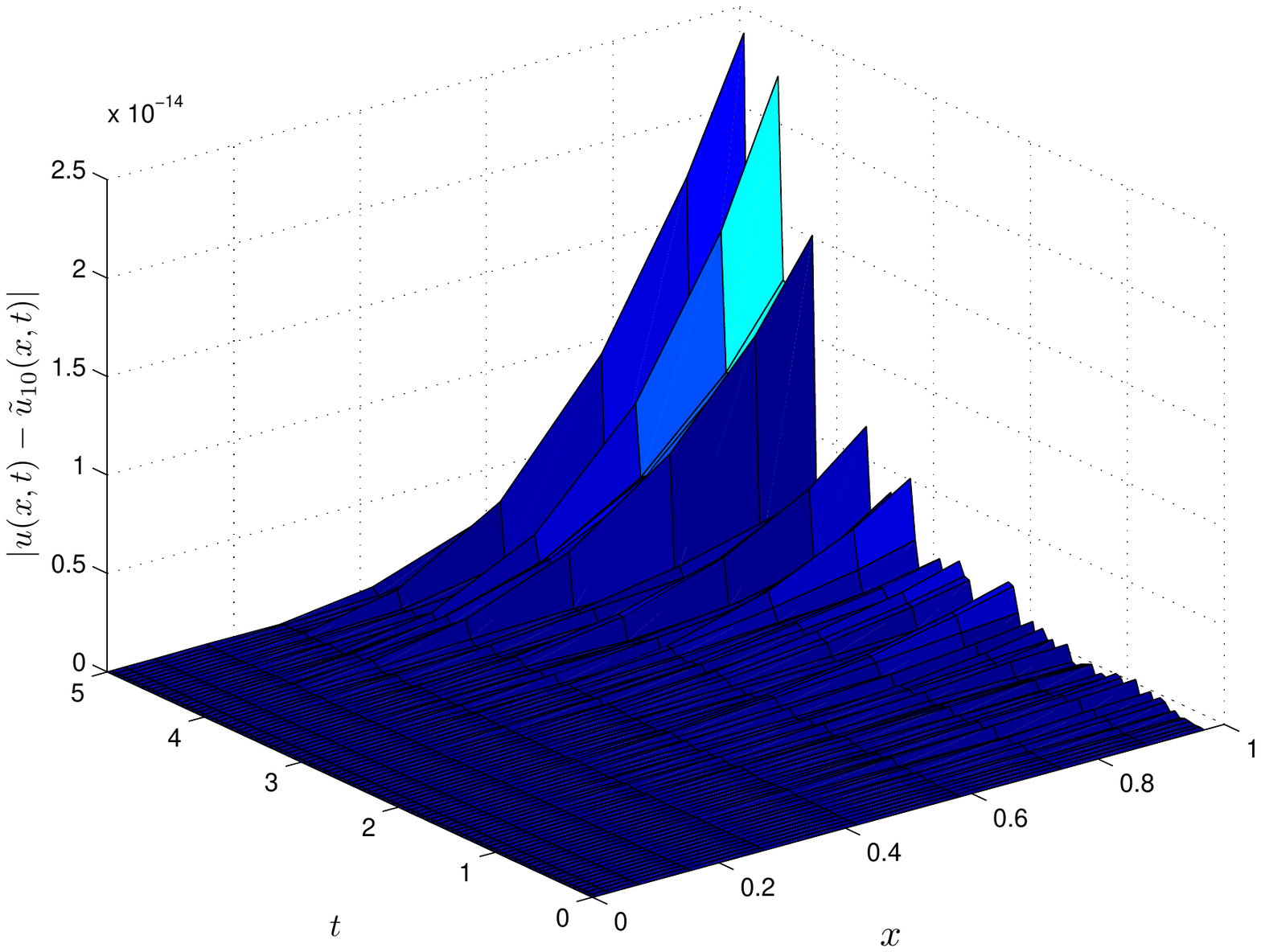}
		\vspace{-2.5cm}
		\caption{Behavior of the approximate solution (left) versus absolute error (right)  for $n=10,\ b=1,\ T=5,\ \alpha=0.5,\   \beta=3.5,\ \mu=1.5,\ \sigma=0.5,\ \eta=-1,\  \nu=7$.}
		\label{Fig-4}
	\end{figure}
	\end{example}
	\begin{example}
		The aim of this example is to show that the newly generated basis functions not only work for the nonlinear problem successfully but also can be used for the problem without fractional derivatives. For this purpose, we consider the well known Burgers' equation \cite{MR2867779}:
		\begin{eqnarray}\label{BURG}
		&&\frac{\partial u}{\partial t}=\epsilon \frac{\partial^2 u}{\partial x^2}-u\frac{\partial u}{\partial x}+s(x,t),\ \epsilon>0,\  x\in[0,b],\ t\in[0,T], \label{EqEx_4-1} \\
		&&u(0,t)= u(b,t)=0,\ u(0,x)=f(x). \label{EqEx_4-2}
		\end{eqnarray}
		Here, the following two different exact solutions are studied:
		\begin{eqnarray}
		u(x,t)&=&\left(\sqrt {1-\sqrt {x}}\right)\sqrt {x}\ \sin \left( \sqrt {x} \right) \cos \left( 
		{t}^{2} \right) 
		,\label{Burg1}\\
		u(x,t)&=&\left(\sqrt {1-\sqrt {x}}\right)\sqrt {x}\ \cos \left( \sqrt {x} \right) \cos \left( 
		{t}^{2} \right)
		,\label{Burg2}
		\end{eqnarray}
		over $(x,t)\in[0,1]\times[0,10]$. It is easy to verify that:
		\begin{itemize}
			\item Both exact solutions vanish at points $x=0$ and $x=1$.
			\item The first solution \eqref{Burg1} is non-smooth at $x=1$ and the second one \eqref{Burg2} is non-smooth at $x=0$ and $x=1$.
		\end{itemize}
		Due to the above facts, it is natural to use the JMFs-2 to approximate the exact solution over interval $[0,1]$ as follows:
			\begin{equation}
			u(x,t)\simeq \tilde u_n(x,t)=\sum_{k=0}^na_k(t)\ {}^2\mathcal{J}^{(\alpha,\beta,\sigma,\eta)}_k(x)
			=\sum_{k=0}^na_k(t)x^{\sigma\eta}\left(1-x^\sigma\right)^\alpha P_k^{(\alpha,\beta)}\left(2x^\sigma-1\right),
			\end{equation} 
		where $\sigma>0$ and the parameters $\eta,\ \alpha$ are chosen such that $\tilde u_n(0,t)=\tilde u_n(1,t)=0$.
		 Using the same fashion which addressed in previous example, we solve this problem numerically. Approximate solutions versus the error functions at $T=10$ for both cases \eqref{Burg1} and \eqref{Burg2} with some values of the parameters $n,\ b,\ T,\ \alpha,\   \beta,\ \eta,\ \sigma$ and $\epsilon$ are depicted in \cref{Fig-5} and \cref{Fig-6}, respectively. 
		 
		  It is observed from \cref{Fig-5} and \cref{Fig-6} that the approximate solutions based on the use of the JMFs-2 have good agreement with the exact ones. 
		  
		  An important point which must be emphasized here is that although the exact solutions \eqref{Burg1} and \eqref{Burg2} are  non-smooth on $[0,1]$ but as it can be clearly seen in \cref{Fig-7} the error functions $E_2(n)$ ($L^2$-norm)  and $E_\infty(n)$ ($L^{\infty}$ norm) decay exponentially for large values of  $n$.
			\begin{figure}[htbp]
				\vspace{-2.5cm}
				\centering
				\includegraphics[width=6.5cm,height=8.5cm,keepaspectratio=true]{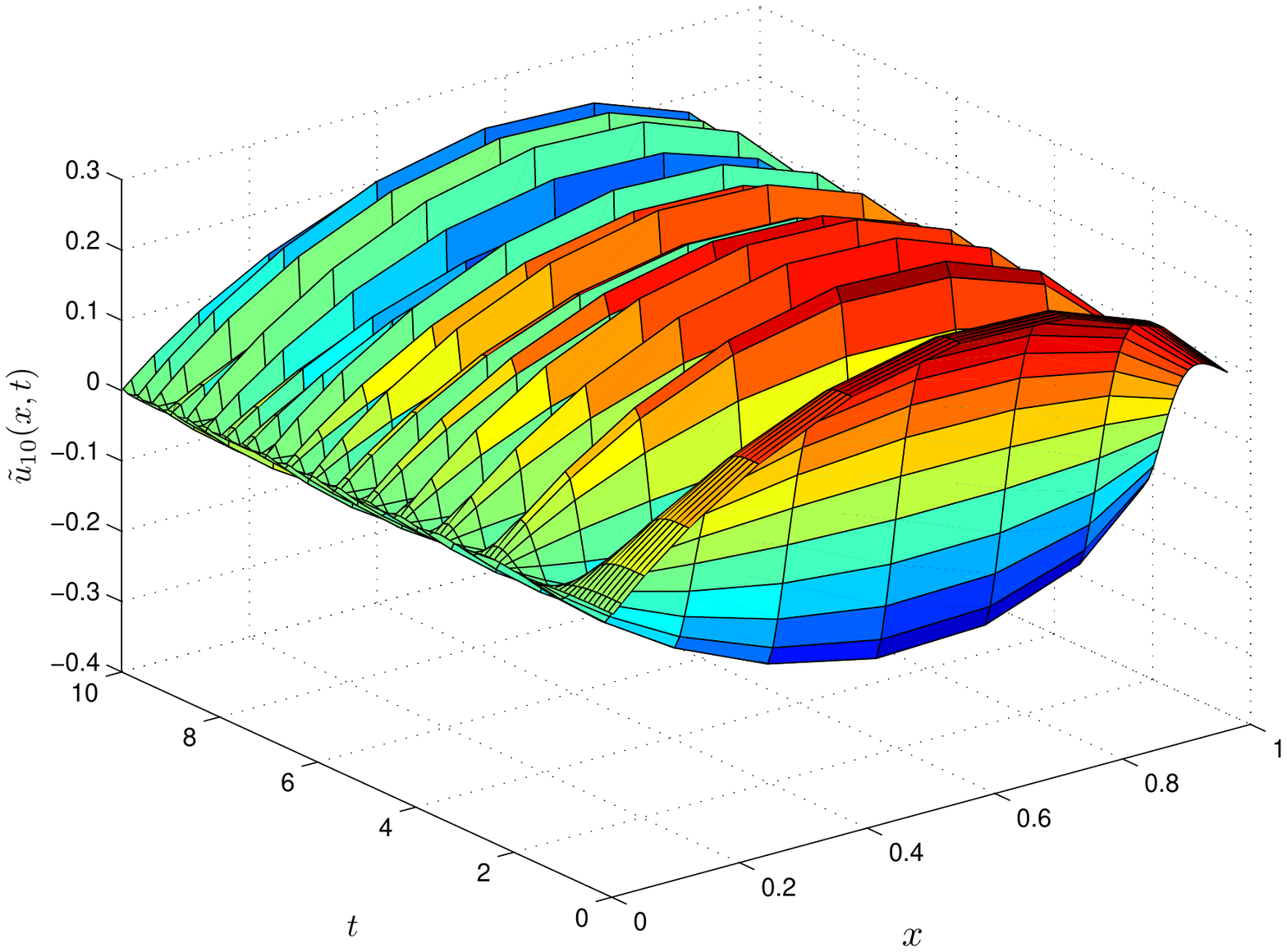}\includegraphics[width=6.5cm,height=8.5cm,keepaspectratio=true]{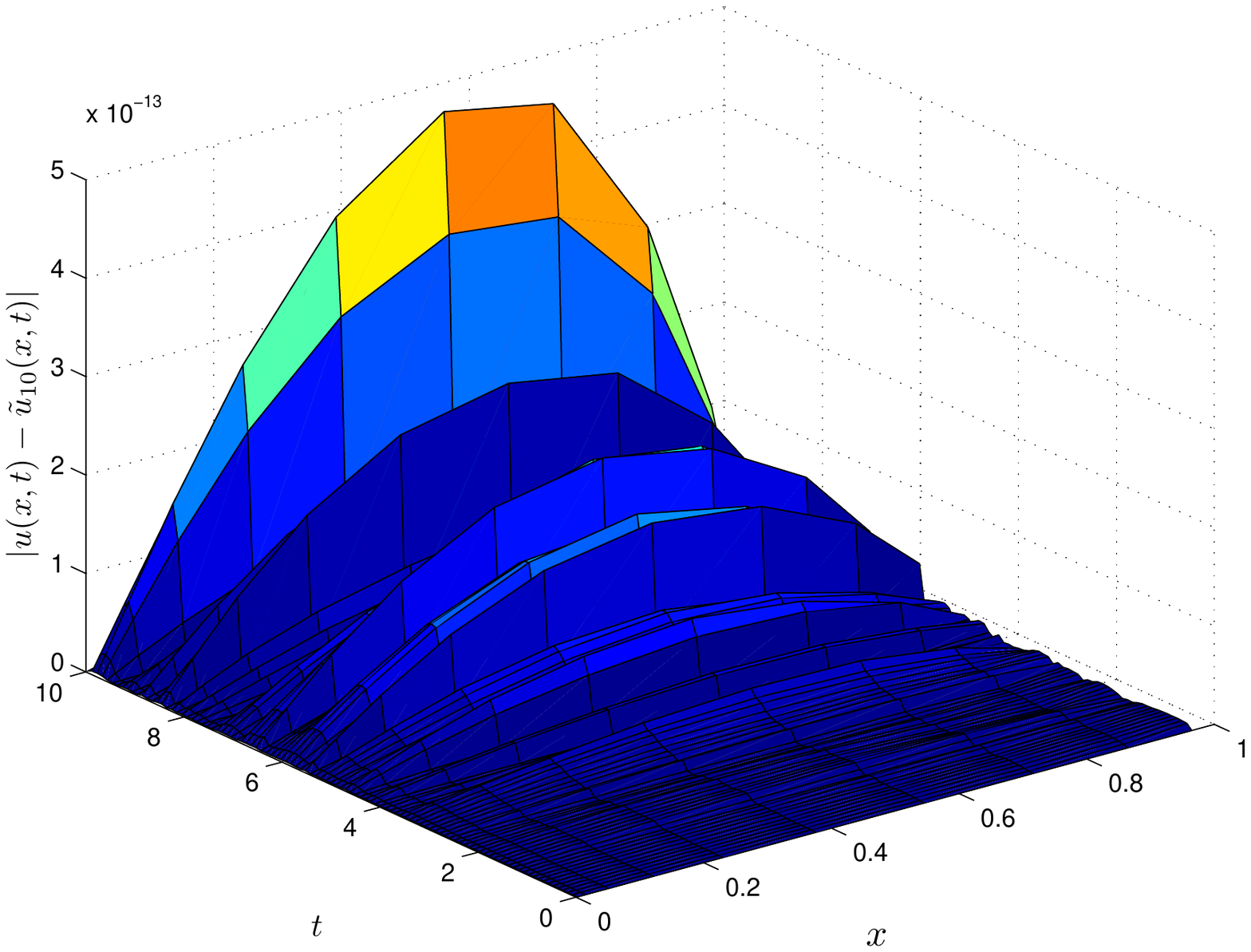}
				\vspace{-2.5cm}
				\caption{Behavior of the approximate solution (left)  versus the absolute error (right)  for $n=10,\ b=1,\ T=10,\ \alpha=0.5,\   \beta=2,\ \eta=2,\ \sigma=0.5,\ \epsilon=0.1$ for the exact solution \eqref{Burg1}.}
				\label{Fig-5}
			\end{figure}
				\begin{figure}[htbp]
					\vspace{-2.5cm}
					\centering
					\includegraphics[width=6.5cm,height=8.5cm,keepaspectratio=true]{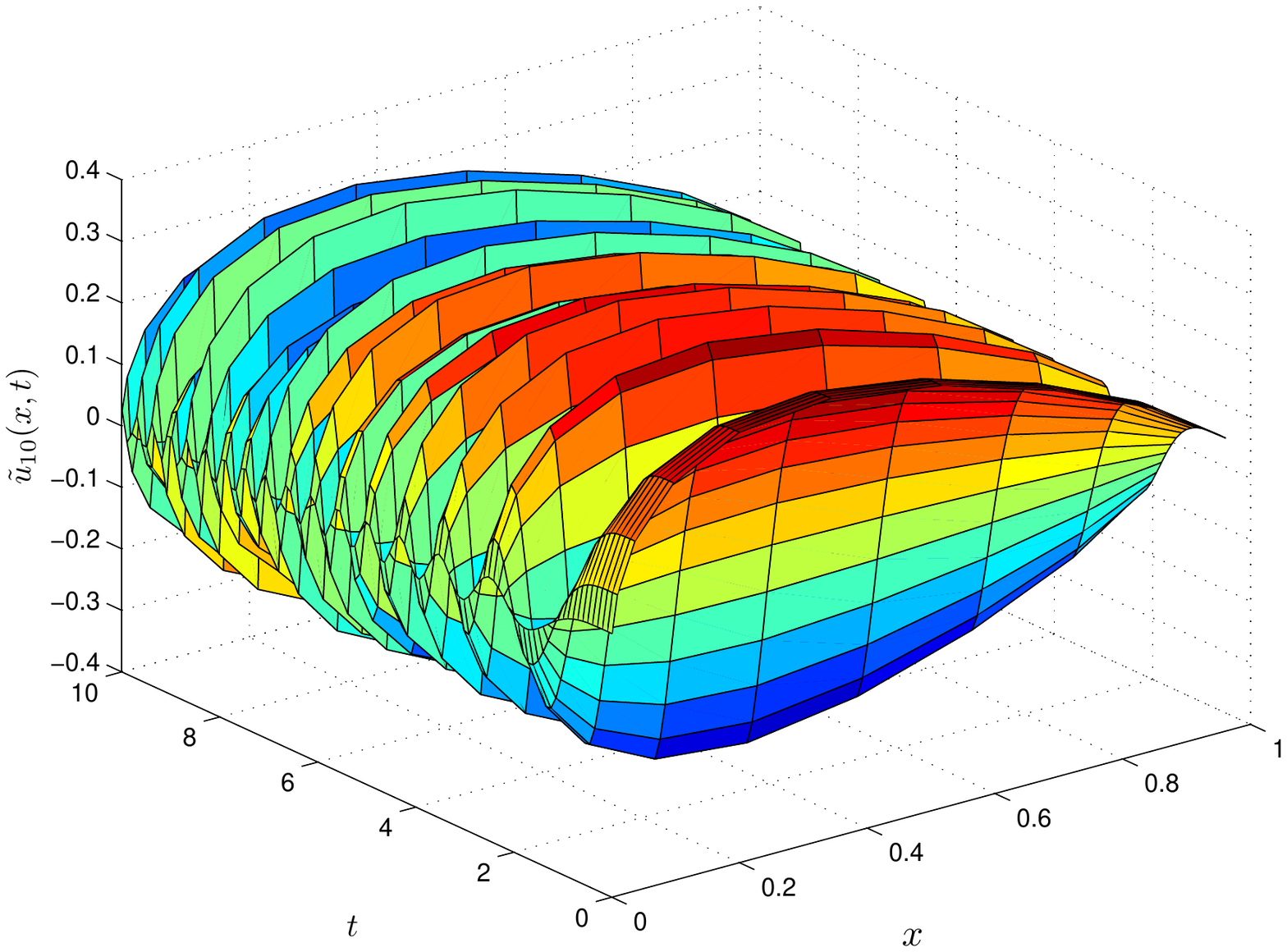}\includegraphics[width=6.5cm,height=8.5cm,keepaspectratio=true]{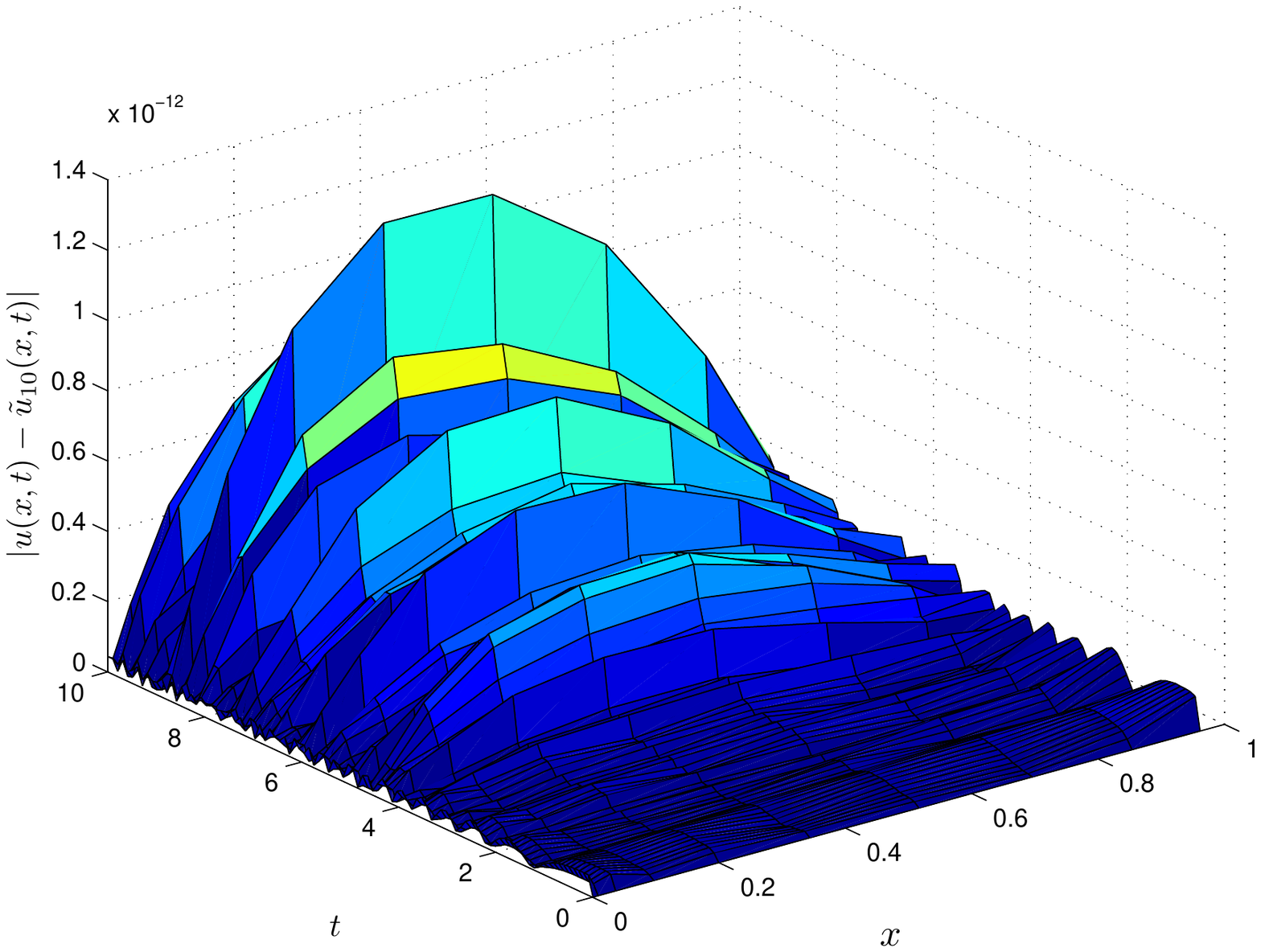}
					\vspace{-2.5cm}
					\caption{Behavior of the approximate solution (left) versus the absolute error (right) for $n=10,\ b=1,\ T=10,\ \alpha=0.5,\   \beta=1,\ \eta=1,\ \sigma=0.5,\ \epsilon=0.1$ for the exact solution \eqref{Burg2}.}
					\label{Fig-6}
				\end{figure}
					\begin{figure}[htbp]
						\vspace{-2.5cm}
						\centering
						\includegraphics[width=6.5cm,height=8.5cm,keepaspectratio=true]{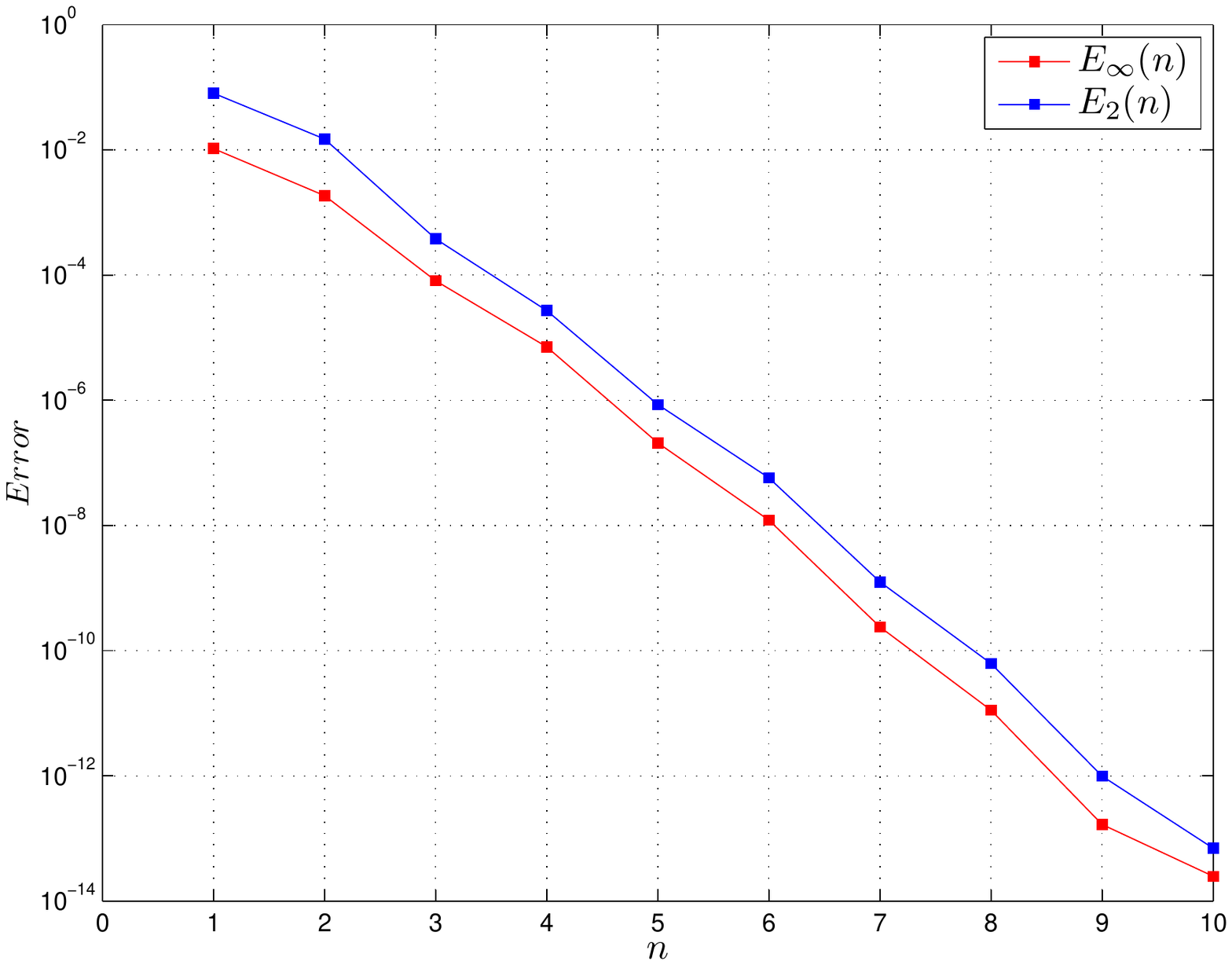}\includegraphics[width=6.5cm,height=8.5cm,keepaspectratio=true]{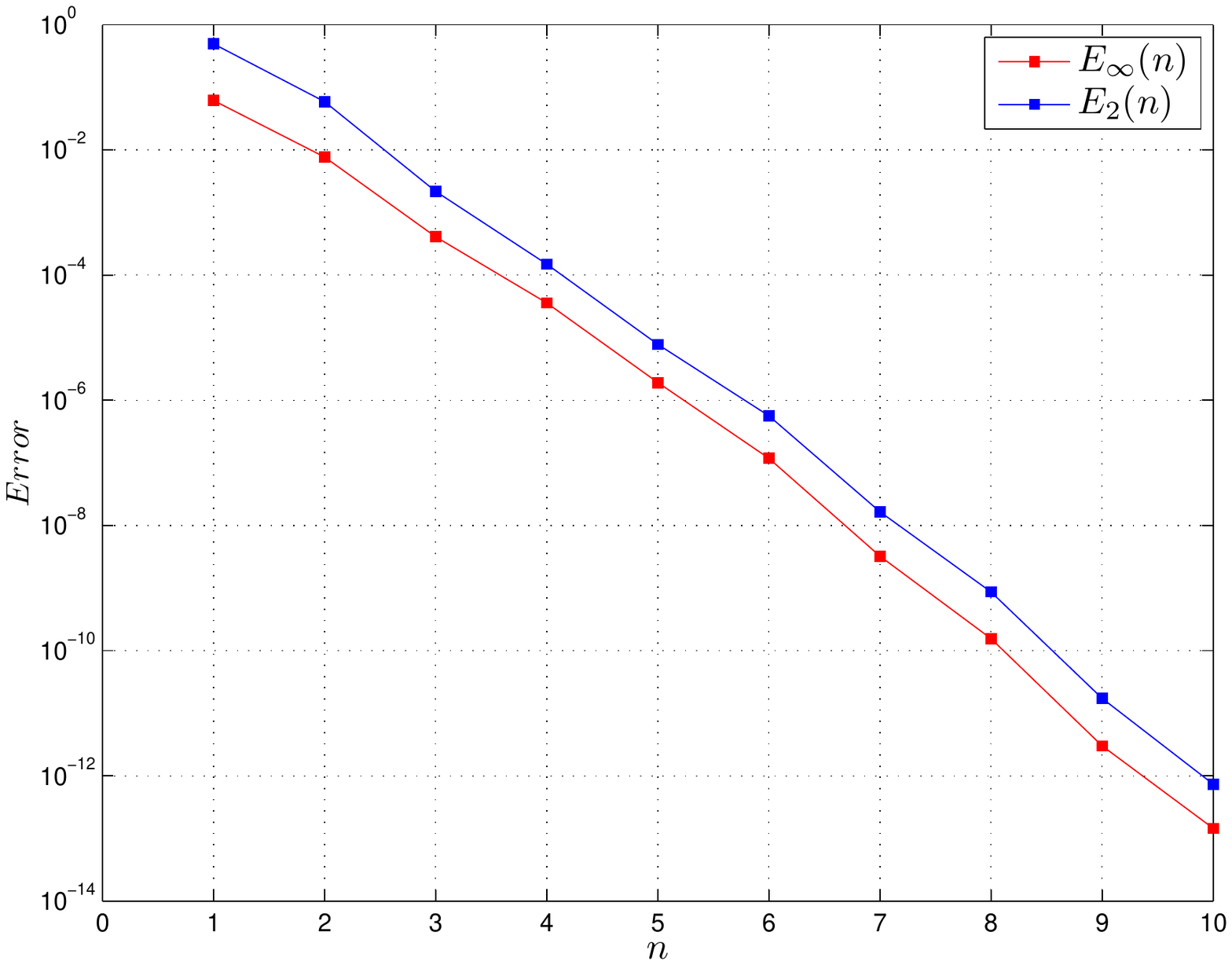}
						\vspace{-2.5cm}
						\caption{The error functions $E_2(n)$  and $E_\infty(n)$ for the exact solutions \eqref{Burg1} (left) and \eqref{Burg2} (right), for various values of $n$   with the values $ b=1,\ T=10,\ \alpha=0.5,\   \beta=2,\ \eta=2,\ \sigma=0.5,\ \epsilon=0.1$.}
						\label{Fig-7}
					\end{figure}
		\end{example}
\section{Conclusions}
\label{sec:conclusions}
Two new classes of M\"untz functions  which we called Jacobi-M\"untz functions are introduced and their useful properties are provided in detail. With some numerical examples, the efficiency and accuracy of these basis functions are verified. It  can be easily observed from the numerical results that the newly generated basis functions are used to establish new spectral collocation methods with exponential accuracy for the problems with non-smooth solutions.  

The authors believe that this article opens a new window for future researches in the filed of numerical analysis. 

	At the end, we address some future works which are in the continuation of this research:
	\begin{itemize}
		\item  With respect to the new (modal) basis functions JMFs-1 and JMFs-2, two new nodal basis functions (with cardinality property) can be defined which are, in fact, the generalized forms of the classical Lagrange basis polynomials.  The pseudo spectral, discontinuous Galerkin and finite element methods and generally other nodal based methods can be developed similarly. 
		\item These basis functions can be used in other modal based (or projection) methods such as: Galerkin, tau and Petrov-Galerkin methods and etc.
		\item Other classes of the  M\"untz Sturm-Liouville problems can be introduced and their properties can be studied in detail.
	\end{itemize}


\begin{thebibliography}{10}
	
	\bibitem{MR1225604}
	{\sc M.~Abramowitz and I.~A. Stegun}, {\em Handbook of mathematical functions
		with formulas, graphs, and mathematical tables}, Dover Publications, Inc.,
	New York, 1992.
	\newblock Reprint of the 1972 edition.
	
	\bibitem{MR2350268}
	{\sc J.~M. Almira}, {\em M\"untz type theorems. {I}}, Surveys in Approximation
	Theory, 3 (2007), pp.~152--194.
	
	\bibitem{MR1688958}
	{\sc G.~E. Andrews, R.~Askey, and R.~Roy}, {\em Special functions}, vol.~71 of
	Encyclopedia of Mathematics and its Applications, Cambridge University Press,
	Cambridge, 1999, \url{https://doi.org/10.1017/CBO9781107325937}.
	
	\bibitem{MR2332922}
	{\sc T.~M. Atanackovic and B.~Stankovic}, {\em On a class of differential
		equations with left and right fractional derivatives}, ZAMM. Zeitschrift
	f\"{u}r Angewandte Mathematik und Mechanik. Journal of Applied Mathematics
	and Mechanics, 87 (2007), pp.~537--546,
	\url{https://doi.org/10.1002/zamm.200710335}.
	
	\bibitem{MR2351655}
	{\sc T.~M. Atanackovic and B.~Stankovic}, {\em On a differential equation with
		left and right fractional derivatives}, Fractional Calculus \& Applied
	Analysis. An International Journal for Theory and Applications, 10 (2007),
	pp.~139--150.
	
	\bibitem{MR3499043}
	{\sc A.~Bhrawy and M.~Zaky}, {\em A fractional-order {J}acobi tau method for a
		class of time-fractional {PDE}s with variable coefficients}, Mathematical
	Methods in the Applied Sciences, 39 (2016), pp.~1765--1779,
	\url{https://doi.org/10.1002/mma.3600}.
	
	\bibitem{MR3471102}
	{\sc S.~Chen, J.~Shen, and L.-L. Wang}, {\em Generalized {J}acobi functions and
		their applications to fractional differential equations}, Mathematics of
	Computation, 85 (2016), pp.~1603--1638,
	\url{https://doi.org/10.1090/mcom3035}.
	
	\bibitem{MR2787811}
	{\sc S.~Esmaeili and M.~Shamsi}, {\em A pseudo-spectral scheme for the
		approximate solution of a family of fractional differential equations},
	Communications in Nonlinear Science and Numerical Simulation, 16 (2011),
	pp.~3646--3654.
	
	\bibitem{MR2824680}
	{\sc S.~Esmaeili, M.~Shamsi, and Y.~Luchko}, {\em Numerical solution of
		fractional differential equations with a collocation method based on
		{M}\"untz polynomials}, Computers \& Mathematics with Applications. An
	International Journal, 62 (2011), pp.~918--929.
	
	\bibitem{MR3261539}
	{\sc N.~J. Ford, M.~L. Morgado, and M.~Rebelo}, {\em A nonpolynomial
		collocation method for fractional terminal value problems}, Journal of
	Computational and Applied Mathematics, 275 (2015), pp.~392--402,
	\url{https://doi.org/10.1016/j.cam.2014.06.013}.
	
	\bibitem{MR3124341}
	{\sc N.~J. Ford, M.~L.~s. Morgado, and M.~Rebelo}, {\em Nonpolynomial
		collocation approximation of solutions to fractional differential equations},
	Fractional Calculus and Applied Analysis. An International Journal for Theory
	and Applications, 16 (2013), pp.~874--891,
	\url{https://doi.org/10.2478/s13540-013-0054-3}.
	
	\bibitem{MR3020667}
	{\sc S.~Kazem, S.~Abbasbandy, and S.~Kumar}, {\em Fractional-order {L}egendre
		functions for solving fractional-order differential equations}, Applied
	Mathematical Modelling. Simulation and Computation for Engineering and
	Environmental Systems, 37 (2013), pp.~5498--5510.
	
	\bibitem{MR3654886}
	{\sc E.~Kharazmi, M.~Zayernouri, and G.~E. Karniadakis}, {\em Petrov-{G}alerkin
		and spectral collocation methods for distributed order differential
		equations}, SIAM Journal on Scientific Computing, 39 (2017),
	pp.~A1003--A1037, \url{https://doi.org/10.1137/16M1073121}.
	
	\bibitem{Khosravian-Arab2015526}
	{\sc H.~Khosravian-Arab, M.~Dehghan, and M.~Eslahchi}, {\em Fractional
		sturm-liouville boundary value problems in unbounded domains: Theory and
		applications}, Journal of Computational Physics, 299 (2015), pp.~526--560,
	\url{https://doi.org/10.1016/j.jcp.2015.06.030}.
	
	\bibitem{MR2218073}
	{\sc A.~A. Kilbas, H.~M. Srivastava, and J.~J. Trujillo}, {\em Theory and
		applications of fractional differential equations}, vol.~204 of North-Holland
	Mathematics Studies, Elsevier Science B.V., Amsterdam, 2006.
	
	\bibitem{Klimek2013795}
	{\sc M.~Klimek and O.~Agrawal}, {\em Fractional {S}turm-{L}iouville problem},
	Computers and Mathematics with Applications, 66 (2013), pp.~795--812,
	\url{https://doi.org/10.1016/j.camwa.2012.12.011}.
	
	\bibitem{MR3654887}
	{\sc A.~Lischke, M.~Zayernouri, and G.~E. Karniadakis}, {\em A
		{P}etrov-{G}alerkin spectral method of linear complexity for fractional
		multiterm {ODE}s on the half line}, SIAM Journal on Scientific Computing, 39
	(2017), pp.~A922--A946, \url{https://doi.org/10.1137/17M1113060}.
	
	\bibitem{MR3742689}
	{\sc Z.~Mao and G.~E. Karniadakis}, {\em A spectral method (of exponential
		convergence) for singular solutions of the diffusion equation with general
		two-sided fractional derivative}, SIAM Journal on Numerical Analysis, 56
	(2018), pp.~24--49.
	
	\bibitem{MR2316514}
	{\sc Z.~M. Odibat and N.~T. Shawagfeh}, {\em Generalized {T}aylor's formula},
	Applied Mathematics and Computation, 186 (2007), pp.~286--293,
	\url{https://doi.org/10.1016/j.amc.2006.07.102}.
	
	\bibitem{MR2867779}
	{\sc J.~Shen, T.~Tang, and L.-L. Wang}, {\em Spectral methods}, vol.~41 of
	Springer Series in Computational Mathematics, Springer, Heidelberg, 2011,
	\url{https://doi.org/10.1007/978-3-540-71041-7}.
	\newblock Algorithms, analysis and applications.
	
	\bibitem{MR3531734}
	{\sc J.~Shen and Y.~Wang}, {\em M\"untz-{G}alerkin methods and applications to
		mixed {D}irichlet-{N}eumann boundary value problems}, SIAM Journal on
	Scientific Computing, 38 (2016), pp.~A2357--A2381,
	\url{https://doi.org/10.1137/15M1052391}.
	
	\bibitem{MR3371355}
	{\sc M.~Zayernouri, M.~Ainsworth, and G.~E. Karniadakis}, {\em Tempered
		fractional {S}turm-{L}iouville eigenproblems}, SIAM Journal on Scientific
	Computing, 37 (2015), pp.~A1777--A1800,
	\url{https://doi.org/10.1137/140985536}.
	
	\bibitem{Zayernouri2013495}
	{\sc M.~Zayernouri and G.~Karniadakis}, {\em Fractional {S}turm-{L}iouville
		eigen-problems: Theory and numerical approximation}, Journal of Computational
	Physics, 252 (2013), pp.~495--517,
	\url{https://doi.org/10.1016/j.jcp.2013.06.031}.
	
	\bibitem{Zayernouri2014A40}
	{\sc M.~Zayernouri and G.~Karniadakis}, {\em Fractional spectral collocation
		method}, SIAM Journal on Scientific Computing, 36 (2014), pp.~A40--A62,
	\url{https://doi.org/10.1137/130933216}.
	
	\bibitem{Zayernouri2014312}
	{\sc M.~Zayernouri and G.~Karniadakis}, {\em Fractional spectral collocation
		methods for linear and nonlinear variable order fpdes}, Journal of
	Computational Physics, 293 (2014), pp.~312--338,
	\url{https://doi.org/10.1016/j.jcp.2014.12.001}.
	
\end{thebibliography}

\end{document}